\theoremstyle{plain}
\newtheorem{theorem}{Theorem}[section]
\newtheorem*{theorem*}{Theorem}
\newtheorem{definition}[theorem]{Definition}
\newtheorem{lemma}[theorem]{Lemma}
\newtheorem{prop}[theorem]{Proposition}
\newtheorem{cor}[theorem]{Corollary}
\newtheorem{rem}[theorem]{Remark}
\newtheorem*{mt*}{Main Theorem}
\newcommand\C{{\mathbb C}}
\newcommand\h{{harm}}
\newcommand\R{{\mathbb R}}
\newcommand\Span{{\hbox{\rm Span}}}
\newcommand\Ker{\hbox{Ker}\,}
\newcommand\Image{\hbox{Im}\,}
\newcommand\g{{\mathfrak{g}}}
\newcommand\id{{\hbox{\em id}}}
\begin{document}
\title[Almost K\"ahler structures on four dimensional unimodular Lie algebras]
{Almost K\"ahler structures on four dimensional unimodular Lie algebras}
\author{Tian-Jun Li and Adriano Tomassini}
\date{\today}
\address{Tian-Jun Li \\ School of Mathematics\\
University of Minnesota \\
Minneapolis\\ MN 55455, USA} \email{tjli@math.umn.edu}
\address{Adriano Tomassini, Dipartimento di Matematica\\ Universit\`a di Parma\\
Parco Area delle Scienze 53/A\\
43124 Parma\\ Italy} \email{adriano.tomassini@unipr.it}
\subjclass[2000]{53C55, 53C25, 32C10}
\keywords{almost K\"ahler structure; tamed almost complex structure; unimodular Lie algebra; symplectic Lie algebra.}
\thanks{The second author was supported by GNSAGA
of INdAM}
\begin{abstract}
Let $J$ be an almost complex structure on a $4$-dimensional and unimodular Lie algebra $\mathfrak{g}$.
We show that
there exists a symplectic form taming $J$ if and only if there is a symplectic form compatible with $J$.
We also introduce groups $H^+_J(\mathfrak{g})$ and
$H^-_J(\mathfrak{g})$ as the subgroups of the Chevalley-Eilenberg cohomology classes which can
be represented by $J$-invariant, respectively $J$-anti-invariant, $2$-forms on $\mathfrak{g}$.
and we prove a cohomological $J-$decomposition theorem following \cite{DLZ}:
$H^2(\mathfrak{g})=H^+_J(\mathfrak{g})\oplus H^-_J(\mathfrak{g})$. We discover that tameness of $J$ can be characterized
in terms of the dimension of $H^{\pm}_J(\mathfrak{g})$, just as in the complex surface case. We also describe
the tamed and compatible symplectic cones respectively. Finally, two applications to homogeneous $J$ on $4-$manifolds are obtained.
\end{abstract}
\maketitle
\tableofcontents
\section*{Introduction}
In this paper we are interested in the geometry of almost complex
structures on four dimensional Lie algebras and their cohomological properties, especially from the symplectic point of view.

Given a $2n$-dimensional
oriented real Lie algebra $\mathfrak{g}$, let $\Lambda^k(\mathfrak{g})$ and $\Lambda^r(\mathfrak{g}^*)$ denote the spaces of $k$-vectors,
$r$-forms respectively on $\mathfrak{g}$.
Let $H^{*}(\g)$ be the Chevalley-Eilenberg cohomology group of $\mathfrak{g}$. Then $\mathfrak{g}$ is called {\em unimodular} if its top dimensional cohomology vanishes.

The special feature in dimension four are, up to a choice of a volume form, the symmetric wedge product pairing on $\Lambda^2(\mathfrak{g}^*)$ (as well as the one on $\Lambda^2(\mathfrak{g})$) with signature $(3, 3)$,
and the induced one on $H^2(\mathfrak{g})$ with signature $(b^+(\mathfrak{g}),b^+(\mathfrak{g}) )$ when $\mathfrak{g}$ is unimodular.
Here $b^+(\mathfrak{g})$ is the maximal dimension of a definite subspace of $H^2(\mathfrak{g})$.

Let $J$ be an almost complex structure on $\mathfrak {g}$, i.e. an automorphism of $\mathfrak{g}$ with $J^2=-\id$, such that the natural
orientation induced by $J$ is the same as the fixed one.
\newline Consider the involution
 on the space of
$2$-forms $\Lambda^2(\mathfrak{g}^*)$, $\alpha(\cdot, \cdot)
\rightarrow \alpha(J\cdot, J\cdot)$. Accordingly, we have the $\pm 1-$eigenspace decomposition:
\begin{equation} \label{formtype-Lie-algebras}
\Lambda^2(\mathfrak{g}^*)=\Lambda_J^+(\mathfrak{g}^*)\oplus \Lambda_J^-(\mathfrak{g}^*).
\end{equation}
We recall the following
\begin{definition}\label{tc} Let $\mathfrak{g}$ be endowed with an almost complex structure $J$. A closed $2$-form $\varphi$ on $\mathfrak{g}$ is said to be {\em $J-$tamed} if it is positive on $v\wedge Jv$ for
any nonzero $v\in \mathfrak{g}$. A $J-$tamed $\varphi$ is said to be {\em $J-$compatible} if $\varphi\in \Lambda^+_J(\mathfrak{g}^*)$. The almost complex structure $J$ is said to be {\em tamed} if there exists a $J-$tamed form and {\em almost K\"ahler}, in short, $\hbox{\rm AK}$, if there exists a $J-$compatible form.
\end{definition}
Our first main result is (see Theorem \ref{compatible}):

\begin{theorem} \label{tamed-and-compatible-intro} Let $\mathfrak{g}$ be a $4-$dimensional Lie algebra with $B\wedge B=0$, where $B\subset \Lambda^2(\mathfrak{g})$ is the space of boundary $2-$vectors.
Then an almost complex
structure $J$ is tamed if and only if $J$ is almost K\"ahler.
\end{theorem}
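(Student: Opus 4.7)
The implication ``$J$ is $\hbox{\rm AK}$ $\Rightarrow$ $J$ is tamed'' is immediate from Definition \ref{tc}, so the content of the theorem is in the converse. Let $\Omega\in\Lambda^2(\mathfrak{g}^*)$ be closed and $J$-tamed, and split it along (\ref{formtype-Lie-algebras}) as $\Omega=\Omega^++\Omega^-$. A direct computation using anti-invariance and skew-symmetry shows that any $\eta\in\Lambda_J^-(\mathfrak{g}^*)$ satisfies $\eta(v,Jv)=0$, so $\Omega^+(v,Jv)=\Omega(v,Jv)>0$ for every nonzero $v\in\mathfrak{g}$. Hence $\Omega^+$ is a $J$-invariant $J$-taming 2-form, and the only missing property for compatibility is closedness; it is precisely here that the hypothesis $B\wedge B=0$ will be used.

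\textbf{Reformulating the hypothesis.} Fix a volume form $\mu\in\Lambda^4(\mathfrak{g}^*)$ and consider the isomorphism $\phi:\Lambda^2(\mathfrak{g})\to\Lambda^2(\mathfrak{g}^*)$ given by $\phi(v)=v\lrcorner\mu$. The identity $\phi(v)\wedge\alpha=\alpha(v)\mu$ shows that $\phi$ intertwines the natural pairing $\Lambda^2(\mathfrak{g})\times\Lambda^2(\mathfrak{g}^*)\to\R$ with the wedge pairing on $\Lambda^2(\mathfrak{g}^*)$, and a short argument using $J^*\mu=\mu$ shows that $\phi$ preserves the $J$-type decomposition. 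Since the natural pairing satisfies $B^\perp=Z:=\ker d|_{\Lambda^2(\mathfrak{g}^*)}$, this gives $\phi(B)^{\perp_\wedge}=Z$ in $\Lambda^2(\mathfrak{g}^*)$. Consequently $B\wedge B=0$ is equivalent to $\phi(B)\subset Z$: the ``$\star$-duals'' of boundary 2-vectors are themselves closed 2-forms.

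\textbf{Producing the candidate.} Invoking the cohomological decomposition $H^2(\mathfrak{g})=H_J^+(\mathfrak{g})\oplus H_J^-(\mathfrak{g})$ established earlier in the paper, write $[\Omega]=[\omega]+[\alpha]$ with $\omega\in\Lambda_J^+\cap Z$ and $\alpha\in\Lambda_J^-\cap Z$. The candidate $J$-compatible form is $\omega$; it is closed and $J$-invariant by construction, so what remains is to show that it tames $J$. Using pointwise $\wedge$-orthogonality of $\Lambda_J^+$ and $\Lambda_J^-$ (a consequence of $J^*\mu=\mu$), $\omega\wedge\alpha=0$ and $\omega\wedge\Omega^-=0$, yielding
\[
\omega\wedge\omega \;=\; [\omega]\cdot[\Omega] \;=\; \omega\wedge\Omega \;=\; \omega\wedge\Omega^+.
\]
Hence $\omega\perp_{q}(\Omega^+-\omega)$ in $(\Lambda_J^+,q)$, where $q(\cdot,\cdot):=(\cdot\wedge\cdot)/\mu$ has signature $(1,3)$ on $\Lambda_J^+$ and $q(\Omega^+)>0$.

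\textbf{Finishing and main obstacle.} A signature analysis of the plane $\Span(\omega,\Omega^+-\omega)$ in the Lorentzian form $(q,\Lambda_J^+)$, combined with $\phi(B)\subset Z$ and the block-diagonal form of the cup product under $H^2=H_J^+\oplus H_J^-$, will exclude the scenarios $[\omega]=0$ and $q(\omega)\le 0$ and place $\omega$ in the same connected component of $\{\eta\in\Lambda_J^+:q(\eta)>0\}$ as $\Omega^+$. This forces $\omega$ to tame $J$, making $\omega$ the sought $J$-compatible form. I expect the hard step to be exactly this final positivity/component argument: from the algebraic relation $\omega\perp_{q}(\Omega^+-\omega)$ alone the representative $\omega$ could in principle be null or negative, and ruling this out is where the full force of $B\wedge B=0$ — via closedness of $\phi(B)$ and the consequent constraints on how $[\alpha]\in H_J^-$ can contribute to $[\Omega]^2$ — must be deployed.
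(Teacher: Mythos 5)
Your reductions at the start are fine: almost K\"ahler trivially implies tamed, every $\eta\in\Lambda_J^-(\mathfrak{g}^*)$ satisfies $\eta(v,Jv)=0$ so the pointwise invariant part $\Omega^+$ tames, and the reformulation of $B\wedge B=0$ as ``$G_\eta(B)$ consists of closed $2$-forms'' is correct and is indeed how the hypothesis enters the paper. But the proof has a genuine gap exactly at the point you flag, and it is not a technical remainder: for a form in $\Lambda_J^+(\mathfrak{g}^*)$, lying in the positive connected component of $\{q>0\}$ is \emph{equivalent} to taming $J$, so ``the final positivity/component argument'' is the entire content of the theorem. The only quantitative relation you derive, $q(\omega)=q(\omega,\Omega^+)$, cannot deliver it: in a $(1,3)$-form one can have $q(\omega)=q(\omega,\Omega^+)<0$ with $\Omega^+$ timelike (e.g.\ $\Omega^+=(1,0,0,0)$ and $\omega=(-1,\sqrt{2},0,0)$ in an orthogonal basis), so additional input is required and none is supplied beyond the promise that $B\wedge B=0$ ``must be deployed.'' There are also foundational problems with the setup at the stated level of generality: the decomposition $H^2(\mathfrak{g})=H_J^+(\mathfrak{g})\oplus H_J^-(\mathfrak{g})$ is Theorem \ref{Nomizu-1-Lie-algebra}, proved only for \emph{unimodular} $\mathfrak{g}$ via Hodge theory (Lemma \ref{exact-boundary}), and your identity $[\omega]\cdot[\Omega]=\Phi_\zeta(\omega,\Omega)$ needs $\Phi_\zeta(\mathcal{Z}^2,\mathcal{B}^2)=0$, i.e.\ closed$\,\wedge\,$exact $=0$ in top degree, which again comes from unimodularity and is not justified from $B\wedge B=0$ alone. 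Finally, the representative $\omega\in\mathcal{Z}_J^+$ is only determined modulo $\mathcal{B}\cap\Lambda_J^+(\mathfrak{g}^*)$, and proving that this space vanishes for tamed $J$ is itself part of what has to be established.

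The ideas missing from your argument are precisely the ones the paper uses in Theorem \ref{compatible} via Proposition \ref{no-invariant-positive-boundary}: (a) $B\wedge B=0$ forces any nonzero invariant boundary $2$-vector to have zero square, hence to be simple (Lemma \ref{simple=0}), hence up to sign positive (Lemma \ref{invariant-vectors}), so tamedness gives $B\cap\Lambda_J^+(\mathfrak{g})=\{0\}$; (b) the signature computation of Lemma \ref{invariant-part} upgrades this to $\pi_+B\cap PC_J=\{0\}$; (c) a compatible form is then produced by the Hahn--Banach separation of Lemma \ref{Hahn-Banach} applied to the pairing $\Psi_+$, rather than by verifying positivity of a cohomologically prescribed representative. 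If you insist on your route, you would in effect have to reprove (a)--(c) in order to exclude $q(\omega)\le 0$ (and to know $\omega$ is unique), at which point the paper's cone-separation argument is both shorter and valid under the weaker hypothesis $B\wedge B=0$.
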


If $\mathfrak{g}$ is $4-$dimensional and unimodular, then the condition $B\wedge B=0$ is satisfied.
In particular, Theorem \ref{tamed-and-compatible-intro} confirms the tame/compatible
Question of Donaldson (see \cite[Question 2]{D}) in the Lie algebra setting.

We further discover that tamed almost complex structures can be characterized cohomologically.
For this purpose, introduce the following subgroups $H_J^{\pm}(\g)$ of $H^2(\g)$.
\begin{definition}\label{invariant-antiinvariant}
Let $ \mathcal Z^2$ denote the space of closed $2$-forms on $\mathfrak{g}$ and let
$\mathcal Z_J^{\pm} = \mathcal Z^2 \cap \Lambda_J^{\pm}(\mathfrak{g}^*)$. Define
\begin{equation}
H_J^{\pm}(\mathfrak{g})=\{ \mathfrak{a} \in H^2(\mathfrak{g})\,\,\, |\,\,\, \exists \; \alpha\in \mathcal
Z_J^{\pm} \mbox{ such that } [\alpha] = \mathfrak{a} \}.
\end{equation}
\end{definition}
If $\mathfrak{g}$ is $4$-dimensional
and unimodular, we establish a $J-$decomposition result for
 $H^2(\mathfrak{g})$, i.e.
$$
H^2(\mathfrak{g})=H^+_J(\mathfrak{g})\oplus H^-_J(\mathfrak{g})
$$
(see Theorem
\ref{Nomizu-1-Lie-algebra}).

The following is our second main result, which is a characterization of tamed almost
complex structures in terms of the dimension $h^{-}$ of $H^{-}_J(\mathfrak{g})$ (see Theorem \ref{tame-new}):
\begin{theorem} \label{tame-char} Suppose $J$ is an almost complex structure on a 4-dimensional unimodular Lie algebra $\mathfrak{g}$. Then
$h_J^-=b^+(\mathfrak{g})$ or $b^+(\mathfrak{g})-1$.
Moreover, $J$ is tamed if and only if
$$
h^-_J=b^+(\mathfrak{g})-1.
$$
\end{theorem}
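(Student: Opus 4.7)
The plan is to analyze the wedge pairing on $H^2(\g)$ via the $J$-decomposition $H^2(\g)=H^+_J(\g)\oplus H^-_J(\g)$ from Theorem \ref{Nomizu-1-Lie-algebra}. The argument rests on three linear-algebraic facts about the wedge pairing on $\Lambda^2(\g^*)$, all verified in a $J$-adapted basis of $\g^*$: (a) $\Lambda^+_J(\g^*)$ and $\Lambda^-_J(\g^*)$ are mutually orthogonal; (b) the pairing on the $2$-dimensional $\Lambda^-_J(\g^*)$ is positive-definite in the orientation induced by $J$; (c) the pairing on the $4$-dimensional $\Lambda^+_J(\g^*)$ has signature $(1,3)$. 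Since $\g$ is unimodular, $H^4(\g)\cong\R$, so the wedge product descends to a non-degenerate symmetric pairing of signature $(b^+(\g),b^-(\g))$ on $H^2(\g)$, and by (a) the decomposition above is orthogonal with respect to it.

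For any non-zero $\alpha\in\mathcal Z^-_J$, fact (b) forces $\alpha\wedge\alpha$ to be a positive multiple of the volume form, hence $[\alpha]^2>0$ and $[\alpha]\neq 0$; thus $H^-_J$ sits in $H^2(\g)$ as a positive-definite subspace of dimension $h^-_J\leq b^+(\g)$, and its orthogonal complement, which by the orthogonality of the decomposition must coincide with $H^+_J$, is non-degenerate of signature $(b^+(\g)-h^-_J,\,b^-(\g))$. Fact (c), together with the observation that any positive-definite subspace of $H^+_J$ lifts through closed representatives to a positive-definite subspace of $\mathcal Z^+_J\subseteq\Lambda^+_J(\g^*)$, forces $b^+(\g)-h^-_J\leq 1$, establishing the dichotomy $h^-_J\in\{b^+(\g)-1,\,b^+(\g)\}$.

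For the tameness characterization I would apply Theorem \ref{tamed-and-compatible-intro}: $J$ is tamed iff $J$ is almost K\"ahler, iff some $\omega\in\mathcal Z^+_J$ is $J$-compatible. Such $\omega$ satisfies $\omega\wedge\omega>0$, so $[\omega]^2>0$ and $H^+_J$ has a positive direction, i.e.\ $h^-_J=b^+(\g)-1$. Conversely, if $h^-_J=b^+(\g)-1$, pick $\mathfrak a\in H^+_J$ with $\mathfrak a^2>0$, represented by $\alpha\in\mathcal Z^+_J$ with $\alpha\wedge\alpha>0$; the symmetric bilinear form $g(X,Y)=\alpha(X,JY)$ is $J$-invariant, and in dimension four the positivity of $\alpha\wedge\alpha$ in the $J$-orientation forces $g$ to be definite, whence $\pm\alpha$ is a $J$-compatible symplectic form. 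The main technical point is the clean transfer of signature data from forms to cohomology: one must check that exact forms lie in the radical of the wedge pairing on $\mathcal Z^2$ (immediate from $d\beta\wedge\alpha=d(\beta\wedge\alpha)$ when $d\alpha=0$), so that the positive index of $H^+_J$ is actually bounded by that of $\Lambda^+_J$, and that the sign of $\alpha\wedge\alpha$ in the $J$-orientation genuinely detects definiteness of $g=\alpha(\cdot,J\cdot)$---both routine in dimension four but places where unimodularity and the $4$-dimensional geometry enter essentially.
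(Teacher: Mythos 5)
Your argument is correct, and it takes a genuinely different route from the paper's. The paper (Theorem \ref{tame-new} via Lemma \ref{positive-boundary}) works at the level of forms: it shows $\mathcal B$ is isotropic of dimension $3-b^+(\g)$, computes $\dim(\Lambda^+_J(\g^*)\cap\mathcal Z)=b^+(\g)+1$ and $\dim(\Lambda^+_J(\g^*)\cap\mathcal B)\le 1$, converts the tamed$\Leftrightarrow$almost K\"ahler criterion of Theorem \ref{compatible} into ``no nonzero $J$-invariant exact $2$-form'' via the duality maps $G_\zeta,G_\eta$ and Lemma \ref{exact-boundary}, and then reads off $h^+_J\in\{b^+(\g),b^+(\g)+1\}$, with $h^-_J=b_2-h^+_J$ from Theorem \ref{Nomizu-1-Lie-algebra}. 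You instead work directly with the cup-product signature on $H^2(\g)$: positive-definiteness of $H^-_J$ plus the bound ``positive index of $H^+_J\le 1$'' (obtained by lifting to the signature-$(1,3)$ space $\Lambda^+_J(\g^*)$, cf.\ Lemma \ref{invariant-pairing}) gives the dichotomy, and for the converse of the tameness criterion you construct a compatible symplectic form from any class of positive square in $H^+_J$, using the pointwise fact that a $J$-invariant $2$-form $\alpha$ with $\alpha\wedge\alpha>0$ in the $J$-orientation is, up to sign, the fundamental form of a $J$-Hermitian metric (this is correct: in a $J$-adapted basis one checks the Gram matrix of $\alpha(\cdot,J\cdot)$ has determinant $(ab-c^2-d^2)^2$ with minors controlled by the sign of $a$, which is exactly the light-cone structure of $\Lambda^+_J$). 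What your route buys is that it bypasses the exact-invariant-form criterion entirely, yields statement 4) of Theorem \ref{tame-new} (the signature of $H^+_J$) as a byproduct, and anticipates the cone description of Theorem \ref{AK-tamed-cones}; what it uses implicitly is the nondegeneracy of the induced pairing on $H^2(\g)$ and the descent of signature from $\mathcal Z$ to cohomology, both of which are exactly parts 3)--4) of Lemma \ref{exact-boundary} (unimodularity enters there, and in $d\gamma\wedge d\gamma=d(\gamma\wedge d\gamma)=0$), so you should cite that lemma rather than treat nondegeneracy as automatic. Both routes rest on the same three pillars: the decomposition of Theorem \ref{Nomizu-1-Lie-algebra}, the signatures of $\Lambda^\pm_J$, and (for the forward implication) Theorem \ref{compatible}.
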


A particular interesting consequence (see Theorem \ref{tame-new}) is that, when $b^+(\mathfrak{g})=2$, an
almost complex structure is either integrable or almost K\"ahler.

We also introduce tamed and compatible cones, and we are able to describe both of them when the Lie algebra is $4-$dimensional and unimodular.
\begin{definition}
The convex cones
$$
\mathcal{K}^t_J=\left\{[\omega]\in H^2(\mathfrak{g})\,\,\,\vert\,\,\,\omega\,\,
\hbox{\rm is}\,\, J\hbox{-\rm tamed}\right\} \quad \hbox{
and}\quad
\mathcal{K}^c_J=\left\{[\omega]\in H_J^+(\mathfrak{g})\,\,\,\vert\,\,\,\omega\,\,
\hbox{\rm is}\,\, J\hbox{-\rm compatible}\right\}\,
$$
are called the {\em $J-$tamed symplectic cone} and the {\em $J-$compatible symplectic cone} respectively.
\end{definition}
We show the following (see Theorem \ref{AK-tamed-cones})
\begin{theorem}\label{AK-tamed-cones-intro}
For an almost complex
structure $J$ on a
$4$-dimensional unimodular Lie algebra, the $J-$compatible cone $\mathcal K_J^c(\mathfrak{g})$ is a connected component of $\{e\in H_J^+(\mathfrak{g})|e^2>0\}$,
and the $J-$tamed cone $\mathcal K_J^t(\mathfrak{g})=\mathcal K_J^c(\mathfrak{g})+H_J^-(\mathfrak{g})$.
\end{theorem}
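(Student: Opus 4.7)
The plan is to prove both parts by exploiting a structural feature of unimodular 4-dimensional Lie algebras: the differential $d\colon\Lambda^3(\mathfrak g^*)\to\Lambda^4(\mathfrak g^*)$ vanishes, so $H^4(\mathfrak g)=\Lambda^4(\mathfrak g^*)\cong\mathbb R$. This identifies the cohomological top-degree scalar with the pointwise one, and makes every cross term of the form $d(\text{3-form})$ vanish automatically.

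For part (a), the inclusion $\mathcal K_J^c\subset\{e\in H_J^+\mid e^2>0\}$, and the openness and convexity of $\mathcal K_J^c$ in $H_J^+$, are immediate. For the converse, if $e\in H_J^+$ has $e^2>0$ and is represented by $\alpha\in\mathcal Z_J^+$, then by the identification above $\alpha\wedge\alpha>0$ pointwise as a scalar multiple of the volume; a short linear-algebra check shows the wedge pairing on $\Lambda_J^+$ has signature $(1,3)$, so this forces $\alpha$ to be pointwise definite as a Hermitian form, meaning either $\alpha$ or $-\alpha$ is $J$-compatible. The cones $\mathcal K_J^c$ and $-\mathcal K_J^c$ are disjoint, since a common class would produce an exact symplectic form $\omega_1+\omega_2$, but any $\omega=d\gamma$ satisfies $\omega^2=d(\gamma\wedge d\gamma)=0$ by unimodularity, contradicting nondegeneracy. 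Hence $\mathcal K_J^c$ is one of the two connected components of $\{e\in H_J^+\mid e^2>0\}$.

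For part (b), the inclusion $\mathcal K_J^c+H_J^-\subset\mathcal K_J^t$ is direct: a closed $J$-anti-invariant form $\eta$ satisfies $\eta(v,Jv)=0$, so $\omega_0+\eta$ is tamed whenever $\omega_0$ is compatible. For the reverse, take $[\omega]\in\mathcal K_J^t$ represented by a tamed $\omega$, and use the $J$-decomposition to write $[\omega]=e^++e^-$ with closed representatives $\omega_0\in\mathcal Z_J^+$ and $\beta\in\mathcal Z_J^-$, so $\omega=\omega_0+\beta+d\gamma$ for some $\gamma\in\Lambda^1(\mathfrak g^*)$. The core of the argument is the pointwise identity
\[
\omega_0\wedge\omega_0=(\omega^+)^2+(\omega^--\beta)^2,
\]
obtained by expanding $(\omega-\beta-d\gamma)^2$: every cross term containing $d\gamma$ equals $d$ of a 3-form and vanishes by unimodularity, while the cross term $\omega^+\wedge\beta$ vanishes by type, since $(1,1)\wedge((2,0)+(0,2))=0$ in real dimension four. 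Since $\omega$ is tamed and $\omega^-(v,Jv)=0$, one has $\omega^+(v,Jv)=\omega(v,Jv)>0$, so $\omega^+$ is pointwise $J$-compatible and $(\omega^+)^2>0$; and since the wedge pairing on $\Lambda_J^-$ is positive definite, $(\omega^--\beta)^2\geq 0$. Therefore $\omega_0^2>0$ pointwise, yielding $(e^+)^2>0$. A parallel cross-term simplification gives $\omega_0\wedge\tilde\omega=\omega^+\wedge\tilde\omega>0$ pointwise for any compatible $\tilde\omega$ (which exists by Theorem \ref{tamed-and-compatible-intro}); combined with part (a), this places $e^+$ in the component $\mathcal K_J^c$ rather than $-\mathcal K_J^c$.

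The main obstacle is discovering and verifying the identity $\omega_0^2=(\omega^+)^2+(\omega^--\beta)^2$: this is exactly where the unimodular hypothesis (to kill the $d\gamma$ cross terms) and the 4-dimensional hypothesis (to force $\omega^+\wedge\beta=0$ by type) must operate in concert, and without either of them the pointwise positivity of $\omega_0^2$ — and hence the reduction of the cohomological cone description to the pointwise linear algebra on $\Lambda_J^\pm$ used in part (a) — would fail.
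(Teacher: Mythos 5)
Your proposal is correct, but it follows a genuinely different route from the paper. The paper deduces the theorem from the duality machinery: Proposition \ref{g} (a Hahn--Banach argument identifying $\mathcal K_J^c$ and $\mathcal K_J^t$ as interiors of dual cones of $HPC_J$ under the $H_J^+$--$H^J_+$ and $H^2$--$H_2$ pairings), the signature statement $(1,b^+(\mathfrak g))$ for $H_J^+(\mathfrak g)$ from part 4) of Theorem \ref{tame-new}, and the light cone lemma, with the formula $\mathcal K_J^t=\mathcal K_J^c+H_J^-$ read off from the comparison of the two dualities. You instead argue directly with representatives: unimodularity ($d\Lambda^3(\mathfrak g^*)=0$) makes wedge squares of closed forms representative-independent, and the four-dimensional linear algebra on $\Lambda_J^+$ (a $J$-invariant form with positive square is definite, and the wedge of two compatible forms is positive) upgrades $e^2>0$ to an actual compatible representative; for the tamed cone you decompose a tamed class via Theorem \ref{Nomizu-1-Lie-algebra} and use the identity $\omega_0^2=(\omega^+)^2+(\omega^--\beta)^2$, whose cross terms die by unimodularity and by $\Lambda_J^+\wedge\Lambda_J^-$-orthogonality. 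Both arguments are sound; the paper's buys generality (Proposition \ref{g} works in any dimension and describes both cones as duals of $HPC_J$, so the cone theorem is reduced to the signature computation), while yours is more elementary and self-contained in dimension four, closer in spirit to the \cite{DLZ}, \cite{LZ} arguments on $4$-manifolds, and it bypasses Hahn--Banach entirely for this statement. Two small points you should make explicit: nonemptiness of $\mathcal K_J^c$ when $\mathcal K_J^t\ne\emptyset$ comes from Theorem \ref{compatible} (you do invoke it, rightly), and the final placement of $e^+$ in $\mathcal K_J^c$ rather than $-\mathcal K_J^c$ rests on the fact that two compatible forms wedge positively, which deserves a one-line proof rather than just "combined with part (a)".
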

We want to emphasize that although there is a classification of unimodular symplectic 4-dimensional Lie algebras (\cite{Ov}),
our proofs of Theorems \ref{tame-char} and \ref{AK-tamed-cones-intro} do not rely on it.
We systematically explore the pairings on $\Lambda^2$, and it seems that much can be extended to certain class of higher dimensional unimodular symplectic Lie algebras.

We finally note that the results above obtained at the Lie algebra level have analogues for a left-invariant almost complex structure $J$ on a
 quotient $M=\Gamma\backslash G$ of a Lie group $G$ by a
discrete subgroup $\Gamma\subset G$. \newline

The paper is organized as follows: in Section \ref{AKSLA} we start by describing carefully the natural non-degenerate bilinear pairings on $\Lambda^k(\mathfrak{g})$ and their properties and by recalling the basic definitions of almost Hermitian geometry on Lie algebras. Sections \ref{tamed-versus-compatible} and \ref{unimodular4-dimensional} are devoted to the proofs of the main results, namely Theorems \ref{tamed-and-compatible-intro}, \ref{tame-char} and \ref{AK-tamed-cones-intro}, which are included in Theorems \ref{compatible}, \ref{tame-new} and \ref{AK-tamed-cones} respectively. Key tools in their proofs are a finite dimensional version of the Hahn-Banach theorem together with the results by \cite[Theorem 3.2]{sullivan} and \cite[Theorem 14]{HL}.
In particular, in Section \ref{tamed-versus-compatible} we prove Theorem \ref{compatible}.
In Section \ref{unimodular4-dimensional}, we focus on unimodular $4$-dimensional Lie algebras endowed with an almost complex structure $J$.
We prove Theorem \ref{Nomizu-1-Lie-algebra} and, as a consequence, we observe that there is a homological $J-$decomposition on $\mathfrak{g}$
(see Remark \ref{homology decomposition}).
As an application of the Theorem \ref{tame-char}, we construct a $2$-parameter family of almost complex structures on the Lie algebras $\mathfrak{nil}^3\times \R$ and $\mathfrak{nil}^4$ (see subsection \ref{examples}) which cannot be tamed by any symplectic form.
In Section 4 we establish the analogous results for homogeneous $J$ on closed $4-$manifolds.
\smallskip

\noindent{\sl Acknowledgments.} The first author would like to thank the Department of Mathematics at University of Parma and the Mathematical Science Center at
Tsinghua University, and the second author would like to thank the School of Mathematics at University of Minnesota,
 for their warm hospitality. We are also grateful to Daniele Angella, Tedi Dr\v{a}ghici and Weiyi Zhang for useful discussions.

\section{Almost complex geometry on Lie algebras}\label{AKSLA}
We start by presenting some preliminaries and fixing some notations.

Let $\mathfrak{g}$ be a $2n$-dimensional real Lie algebra, which we will assume to be oriented.
The Lie bracket induces the operator $d:\Lambda^1(\mathfrak{g}^*)\to \Lambda^2(\mathfrak{g}^*)$ by $d\phi(u,v)=-\phi([u,v])$,
which makes $\Lambda^*(\mathfrak{g}^*)$ a differential algebra
by the Leibniz rule. This is the so called {\em Chevalley-Eilenberg complex} of the Lie algebra $\mathfrak{g}$.

We will denote by $\mathcal{Z}^r$, $\mathcal{B}^r$, the space of closed, exact $r-$forms respectively on $\mathfrak{g}$.
Then the $r-$th Chevalley-Eilenberg cohomology is
$
H^r(\mathfrak{g})=\mathcal{Z}^r/\mathcal{B}^r
$.

Let us also define the homology $H_*(\mathfrak{g})$ of $\mathfrak{g}$.
Given any $k$-vector
$u\in\Lambda^k(\mathfrak{g})$, set
$$
T_u(\varphi)=\varphi(u),
$$
for every $\varphi\in\Lambda^k(\mathfrak{g}^*)$. Then $T_u$ is a linear functional on
$\Lambda^k(\mathfrak{g}^*)$. For every $w\in\Lambda^k(\mathfrak{g})$, set
$$
dT_w(\varphi)=(-1)^{k+1}T_w(d\varphi),
$$
for every $\varphi\in\Lambda^{k-1}(\mathfrak{g}^*)$. Then, $dT_w$ is a linear functional on
$\Lambda^{k-1}(\mathfrak{g}^*)$. A $k$-vector $u$ is a {\em k-cycle} if $dT_u=0$, a {\em k-boundary} if there exists a $(k+1)$-vector
$w$ such that $T_u=dT_w$. Now the $k-$th homology of $\mathfrak{g}$ is the quotient of the space of $k-$cycles by the space of $k-$boundaries. In the sequel, we will
denote by $Z^k$, $B^k$, the space of $k$-cycles, $k$-boundaries on $\mathfrak{g}$ respectively.
\subsection{Bilinear pairings}\label{bilinearpairings}
We will consider a number of non-degenerate bilinear pairings including
$$
\Psi^k:\Lambda^k(\mathfrak{g})\times\Lambda^{k}(\mathfrak{g^*})\to\R, \quad \Psi^k(u, \alpha)=\alpha(u),
$$
for each $k$.
Let us first state some general properties of a non-degenerate bilinear pairing on finite dimensional vector spaces.

Let $V, W$ be real vector spaces and $ \Gamma:V\times W\to \R$ a bilinear pairing. Let $V_0\subset V$ and $W_0\subset W$ be subspaces and set
$$
V_0^{\bot}=\{w\in W\,\,\,|\,\,\,\Gamma(v, w)=0, \hbox{ for any } v\in V_0\},
$$
$$
W_0^{\bot}=\{v\in V\,\,\,|\,\,\,\Gamma(v, w)=0, \hbox{ for any } w\in W_0\}.
$$
We say that $V_0\subset V$ and $W_0\subset W$ are a $\Gamma-${\em complementary pair}, if
$$
V_0=W_0^{\bot}\,\,\,\hbox{ and }\,\,\,W_0=V_0^{\bot}.
$$
\begin{lemma}\label{complement}
Suppose $V$ and $W$ have finite dimension and $\Gamma:V\times W\to \R$ is a non-degenerate pairing. Then
\begin{itemize}
\item $\dim V=\dim W$;
\item $\dim W_0 +\dim W_0^{\bot} =\dim W$ for any $W_0\subset W$;
\item For any $V_0\subset V$ and $W_0\subset W$, if $V_0=W_0^{\bot}$ then $W_0=V_0^{\bot}$;
\item If $V_1\subset V_0$, $W_0\subset W_1$, $(V_0, W_0)$ and $(V_1, W_1)$ are both $\Gamma-$complementary pair, then there is an induced pairing
$\bar{\Gamma}:V_0/V_1\times W_1/W_0\to \R$, which is also non-degenerate.
\end{itemize}
\end{lemma}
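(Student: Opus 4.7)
The plan is to prove the four bullets in order, each using only standard finite dimensional linear algebra together with the one that came before.

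For the first bullet, I would consider the two linear maps naturally induced by $\Gamma$: the map $\Phi:V\to W^*$ defined by $\Phi(v)=\Gamma(v,\cdot)$ and the map $\Psi:W\to V^*$ defined by $\Psi(w)=\Gamma(\cdot,w)$. Non-degeneracy of $\Gamma$ is precisely the statement that both $\Phi$ and $\Psi$ are injective. Since $\dim W^*=\dim W$ and $\dim V^*=\dim V$, injectivity of $\Phi$ gives $\dim V\leq\dim W$ and injectivity of $\Psi$ gives $\dim W\leq\dim V$, hence equality.

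For the second bullet, I would restrict $\Phi$ to get the composition $V \xrightarrow{\Phi} W^* \to W_0^*$ with the second arrow being restriction of linear functionals. The composition is surjective because $\Phi$ is an isomorphism (by the first bullet) and the restriction map $W^*\to W_0^*$ is surjective. Its kernel is exactly $W_0^{\perp}$ by definition, so rank-nullity gives $\dim V=\dim W_0^{\perp}+\dim W_0$, which equals the claimed $\dim W=\dim W_0+\dim W_0^{\perp}$ by the first bullet. For the third bullet, the inclusion $W_0\subset V_0^{\perp}$ is automatic when $V_0=W_0^{\perp}$. The reverse inclusion is a dimension count using the second bullet twice: $\dim W_0=\dim W-\dim W_0^{\perp}=\dim V-\dim V_0=\dim V_0^{\perp}$, which combined with the inclusion forces equality.

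For the fourth bullet, I would define $\bar\Gamma([v],[w]):=\Gamma(v,w)$ for $v\in V_0$, $w\in W_1$, and first check well-definedness. Given $v'-v\in V_1$ and $w'-w\in W_0$, expand
\[
\Gamma(v',w')-\Gamma(v,w)=\Gamma(v'-v,w')+\Gamma(v,w'-w).
\]
The first summand vanishes because $v'-v\in V_1=W_1^{\perp}$ and $w'\in W_1$; the second vanishes because $w'-w\in W_0=V_0^{\perp}$ and $v\in V_0$. For non-degeneracy, suppose $\bar\Gamma([v],[w])=0$ for every $[w]\in W_1/W_0$. Then $\Gamma(v,w)=0$ for all $w\in W_1$, so $v\in W_1^{\perp}=V_1$, i.e.\ $[v]=0$ in $V_0/V_1$; the symmetric argument takes care of the other side. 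The most delicate point, although still routine, is keeping the bookkeeping of which orthogonal complement is used where in the fourth bullet; once one writes out that $V_1=W_1^{\perp}$ and $W_0=V_0^{\perp}$, well-definedness and non-degeneracy fall out in parallel.
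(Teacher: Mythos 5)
Your proof is correct: the paper dismisses this lemma as straightforward finite-dimensional linear algebra, and your argument via the induced maps $V\to W^{*}$, $W\to V^{*}$, rank--nullity, and the dimension count for the third bullet is exactly the standard way to fill in those omitted details, as is the quotient pairing in the fourth bullet. The only cosmetic point is that in the third bullet you cite ``the second bullet'' for the subspace $V_0\subset V$, which is really its mirror statement with the roles of $V$ and $W$ interchanged (proved by the same argument using $\Psi$ instead of $\Phi$), so it would be worth stating that symmetric version explicitly.
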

This is straightforward since $\dim V$ and $\dim W$ are finite.
\begin{lemma}\label{involution}
Suppose $V$ and $W$ have finite dimension and $\Gamma:V\times W\to \R$ is a non-degenerate pairing.
Suppose that there are involutions $\iota_V, \iota_W$ on $V$ and $W$ such that
\begin{equation}\label{involution-invariance}
\Gamma(\iota_V(\cdot), \iota_W(\cdot))=\Gamma(\cdot, \cdot).
\end{equation}
Let $V^{\pm}$ and $W^{\pm}$ be the $\pm-$eigenspaces of $\iota_V$ and $\iota_W$ respectively, and
$$\Gamma_{\pm}:V^{\pm}\times W^{\pm}\to \R$$ be the restriction of $\Gamma$. Then
\begin{itemize}
\item $\Gamma(V^{\pm}, W^{\mp})=0$;
\item Furthermore, $V^{\pm}$ and $W^{\mp}$ are a $\Gamma-$complementary pair;
\item $\Gamma_{\pm}$ is non-degenerate.
\end{itemize}
\end{lemma}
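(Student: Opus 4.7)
The plan is to handle the three bullets in order, with the first being a two-line calculation that then drives everything else via a dimension count; the whole argument is elementary finite-dimensional linear algebra, so I do not anticipate any serious obstacle beyond ordering the steps correctly.

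First I would verify $\Gamma(V^{\pm}, W^{\mp}) = 0$. Pick $v \in V^{\pm}$ and $w \in W^{\mp}$. The invariance hypothesis \eqref{involution-invariance} gives $\Gamma(v,w) = \Gamma(\iota_V v, \iota_W w) = \Gamma(\pm v, \mp w) = -\Gamma(v,w)$, so $\Gamma(v,w)=0$.

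For the second bullet I would use a dimension count rather than chasing elements. Because $\iota_V^2 = \id$ and $\iota_W^2 = \id$, one has the eigenspace decompositions $V = V^+ \oplus V^-$ and $W = W^+ \oplus W^-$, and by Lemma \ref{complement} the non-degeneracy of $\Gamma$ forces $\dim V = \dim W =: n$. The first bullet gives the inclusions $V^{\pm} \subset (W^{\mp})^{\bot}$, and Lemma \ref{complement} says $\dim (W^{\mp})^{\bot} = n - \dim W^{\mp}$, whence
\[
\dim V^{+} + \dim W^{-} \le n, \qquad \dim V^{-} + \dim W^{+} \le n.
\]
Summing and using $\dim V^+ + \dim V^- = \dim W^+ + \dim W^- = n$ shows both inequalities are equalities, so $V^{\pm} = (W^{\mp})^{\bot}$. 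The third item of Lemma \ref{complement} then automatically yields $W^{\mp} = (V^{\pm})^{\bot}$, giving the $\Gamma$-complementary pair.

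Finally, for the non-degeneracy of $\Gamma_{\pm}$, suppose $v \in V^{+}$ satisfies $\Gamma(v,w) = 0$ for every $w \in W^{+}$. Combined with the first bullet, which gives $\Gamma(v,w) = 0$ for every $w \in W^{-}$, we conclude $\Gamma(v,\cdot) \equiv 0$ on $W$; non-degeneracy of $\Gamma$ forces $v = 0$. The symmetric argument in the second slot, and the identical argument on $V^{-} \times W^{-}$, finish the proof. The only subtle point to respect is the order: one cannot directly assert $\dim V^{\pm} = \dim W^{\pm}$ before identifying $V^{\pm}$ with $(W^{\mp})^{\bot}$, which is why the dimension count is placed between bullets (1) and (3).
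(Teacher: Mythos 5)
Your proof is correct. The first bullet is verified by exactly the same two-line computation as in the paper, and both arguments reduce the second bullet, via the third item of Lemma \ref{complement}, to the single equality $V^{\pm}=(W^{\mp})^{\bot}$; the difference lies in how that equality is obtained. The paper proves the nontrivial inclusion $(W^{-})^{\bot}\subset V^{+}$ by element-chasing: write $v=v_{+}+v_{-}$, note that $\Gamma(v_{-},W^{-})=0$ from the hypothesis and $\Gamma(v_{-},W^{+})=0$ from the first bullet, and invoke non-degeneracy of $\Gamma$ on all of $W$ to force $v_{-}=0$. You instead avoid elements entirely and deduce equality from the dimension formula $\dim W_{0}+\dim W_{0}^{\bot}=\dim W$ (second item of Lemma \ref{complement}) together with the eigenspace decompositions $V=V^{+}\oplus V^{-}$ and $W=W^{+}\oplus W^{-}$: summing the two inclusions forces both to be equalities. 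The two mechanisms encode the same information (the dimension formula is itself a repackaging of non-degeneracy), but your count has the small bonus of making $\dim V^{\pm}=\dim W^{\pm}$ explicit, while the paper's version is marginally more self-contained, using only the first and third items of Lemma \ref{complement}. Your verification of the third bullet (a vector of $V^{+}$ pairing trivially with $W^{+}$ also pairs trivially with $W^{-}$ by the first bullet, hence vanishes, and symmetrically in the other slot) is precisely the one-line argument the paper compresses into ``a direct consequence'' of the earlier bullets and non-degeneracy, so no gap there either.
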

\begin{proof}
Suppose $v\in V^{\pm}$ and
$w\in W^{\mp}$. Then by \eqref{involution-invariance},
$$
\Gamma(v, w)=\Gamma(\iota_V(v), \iota_W(w))=-\Gamma(v,w).
$$
Thus $\Gamma(v, w)=0$.

For the second bullet we just show that $V^+$ and $W^-$ are a $\Gamma-$complementary pair, the other case is similar.
Since $\Gamma$ is non-degenerate, by the third bullet of Lemma \ref{complement} it is sufficient to prove that
$V^+=(W^{-})^\bot.$
By the first bullet we have $V^+\subset(W^{-})^\bot.$

Conversely, let $v\in (W^{-})^\bot$. Write
 $v=v_++v_-$, where $v_{\pm}\in V^{\pm}$. Given any
$w\in W^-$, we have:
$$
0=\Gamma(v, w)=\Gamma(v_+ + v_-,w)=\Gamma(v_-,w).
$$
Therefore, $\Gamma(v_-, W^+\oplus W^-)=0$. Consequently, $v_-=0$ since $\Gamma$ is non-degenerate. Hence
$v=v_+\in V^+$ and $ V^+\supset (W^{-})^\bot$.\vskip.2truecm\noindent
\smallskip

The third bullet is a direct consequence of the second bullet and the non-degeneracy of $\Gamma$.
\end{proof}
Applying Lemma \ref{complement} we obtain
\begin{lemma}\label{closed-exact'} $(Z_k, \mathcal B^k)$ is a $\Psi^k-$complementary pair, and so is
$(B_k, \mathcal Z^k)$.
\noindent In particular, there is an induced pairing
$$
\bar \Psi^k:H_k(\mathfrak{g})\times H^k(\mathfrak{g})\to \R,
$$
which is also non-degenerate.
\end{lemma}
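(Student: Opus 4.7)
The plan is to exploit the fact that, under the non-degenerate pairings $\Psi^\bullet$, the differential $d$ on forms has a well-defined adjoint on vectors, and this adjoint is exactly the boundary operator implicit in the definition of $B_k$ and $Z_k$. Once this adjoint framework is in place, the complementary pair statements are instances of the standard orthogonality relations between kernel and image.

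First I would introduce $\partial : \Lambda^{k+1}(\mathfrak{g}) \to \Lambda^k(\mathfrak{g})$ as follows. For each $w \in \Lambda^{k+1}(\mathfrak{g})$, the assignment $\alpha \mapsto (-1)^{k+2} T_w(d\alpha)$ is a linear functional on $\Lambda^k(\mathfrak{g}^*)$; since $\Psi^k$ is non-degenerate and the spaces are finite dimensional (Lemma \ref{complement}, first bullet), there is a unique $\partial w \in \Lambda^k(\mathfrak{g})$ with $T_{\partial w} = dT_w$, i.e.
\[
\Psi^k(\partial w, \alpha) \;=\; (-1)^{k+2}\,\Psi^{k+1}(w, d\alpha), \qquad \forall\,\alpha \in \Lambda^k(\mathfrak{g}^*).
\]
By the very definitions of $k$-cycle and $k$-boundary, $Z_k = \ker(\partial : \Lambda^k \to \Lambda^{k-1})$ and $B_k = \mathrm{Image}(\partial : \Lambda^{k+1} \to \Lambda^k)$.

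Next I would read off the complementary pairs directly from this adjoint relation. For $(Z_k, \mathcal{B}^k)$: $u \in Z_k$ means $\Psi^k(u, d\varphi) = 0$ for every $\varphi \in \Lambda^{k-1}(\mathfrak{g}^*)$, which is exactly $u \in (\mathcal{B}^k)^\bot$, giving $Z_k = (\mathcal{B}^k)^\bot$; the third bullet of Lemma \ref{complement} then yields the reverse identity $\mathcal{B}^k = Z_k^\bot$. For $(B_k, \mathcal{Z}^k)$: the adjoint formula shows $\Psi^k(\partial w, \alpha) = 0$ whenever $\alpha \in \mathcal{Z}^k$, so $B_k \subset (\mathcal{Z}^k)^\bot$. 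The opposite inclusion is the one step requiring care: one checks it by a dimension count using the second bullet of Lemma \ref{complement} applied to both $\Psi^k$ and $\Psi^{k+1}$, observing that $\dim B_k = \mathrm{rank}(\partial) = \mathrm{rank}(d|_{\Lambda^k(\mathfrak{g}^*)}) = \dim \mathcal{B}^{k+1}$ and $\dim \mathcal{Z}^k = \dim \Lambda^k(\mathfrak{g}^*) - \dim \mathcal{B}^{k+1}$, so $\dim B_k + \dim \mathcal{Z}^k = \dim \Lambda^k(\mathfrak{g})$, forcing equality. Again the third bullet of Lemma \ref{complement} gives $\mathcal{Z}^k = B_k^\bot$.

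Finally, with the inclusions $B_k \subset Z_k$ and $\mathcal{B}^k \subset \mathcal{Z}^k$, the fourth bullet of Lemma \ref{complement} applied to $(V_0, W_0) = (Z_k, \mathcal{B}^k)$ and $(V_1, W_1) = (B_k, \mathcal{Z}^k)$ produces the induced non-degenerate pairing
\[
\bar \Psi^k : Z_k/B_k \times \mathcal{Z}^k/\mathcal{B}^k = H_k(\mathfrak{g}) \times H^k(\mathfrak{g}) \to \mathbb{R}.
\]
The only genuine step is the rank/dimension matching that certifies $B_k = (\mathcal{Z}^k)^\bot$; everything else is a direct translation of the definitions through the pairing $\Psi^k$ together with the general linear algebra already packaged in Lemma \ref{complement}.
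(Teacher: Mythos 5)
Your argument is correct and follows essentially the same route as the paper: identify $Z_k=(\mathcal B^k)^\bot$ directly from the definitions via $\Psi^k$, invoke the third bullet of Lemma \ref{complement} to get the reverse identity, and get the induced non-degenerate pairing on $H_k(\mathfrak{g})\times H^k(\mathfrak{g})$ from the fourth bullet. The only divergence is your rank/dimension count for the pair $(B_k,\mathcal Z^k)$, which is correct but avoidable: arguing as for the first pair, $\alpha\in B_k^\bot$ means $\Psi^{k+1}(w,d\alpha)=0$ for all $w\in\Lambda^{k+1}(\mathfrak{g})$, so $d\alpha=0$ by non-degeneracy of $\Psi^{k+1}$, giving $\mathcal Z^k=B_k^\bot$ directly and then $B_k=(\mathcal Z^k)^\bot$ by the same third bullet -- this is what the paper's ``the second pair is similar'' amounts to.
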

\begin{proof}
By the fourth bullet of Lemma \ref{complement}, the last statement follows from the first statement. For the first statement we just treat the first pair, the second pair is similar.
Further, by the third bullet of Lemma \ref{complement} we just need to
 show that $Z_k=\mathcal (\mathcal{B}^{k})^\bot$.

$(\subset)$ Let $u\in Z_k$. Then, for every $\gamma\in\Lambda^{k-1}(\mathfrak{g}^*)$,
$$
\Psi^k(u,d\gamma)=T_u(d\gamma)=(-1)^{k+1}dT_u(\gamma)=0,
$$
i.e., $u\in (\mathcal{B}^{k})^\bot$.

$(\supset)$ Let $u\in(\mathcal{B}^{k})^\bot$. Then,
$$
0=\Psi^k(u,d\gamma)=T_u(d\gamma)=(-1)^{k+1}dT_u(\gamma)
$$
for every $\gamma\in\Lambda^{k-1}(\mathfrak{g}^*)$. Hence, $dT_u=0$ and $u\in Z_k$. \newline
\end{proof}
\smallskip

We also need to consider the following pairings.
Fix $\zeta\neq 0 \in\Lambda^{2n}(\mathfrak{g})$ inducing the given orientation of $\mathfrak{g}$, and $\eta\in\Lambda^{2n}(\mathfrak{g}^*)$ with $\Psi^{2n}(\zeta, \eta)=1$.
\begin{definition}\label{def-pairing}
For each $k$, we define the bilinear pairing
$$
\Phi_{\zeta}^k: \Lambda^k(\mathfrak{g}^*)\times\Lambda^{2n-k}(\mathfrak{g}^*)\to\R,
$$
\begin{equation}\label{zeta-pairing}
\Phi_{\zeta}^k(\alpha,\beta)=(\alpha\wedge\beta)(\zeta),
\end{equation}
for every $\alpha\in\Lambda^k(\mathfrak{g}^*)$, $\beta\in\Lambda^{2n-k}(\mathfrak{g}^*)$, and similarly define
$$\Phi_{\eta}^k:\Lambda^k(\mathfrak{g})\times\Lambda^{2n-k}(\mathfrak{g})\to\R, \quad \quad \Phi_{\eta}^k(u,w)=\eta (u\wedge w).$$
\end{definition}

A $k-$vector in $\Lambda^k(\mathfrak{g})$ is called {\em simple} if it is of the form $w_1\wedge \ldots\wedge w_k$ for $w_i\in \mathfrak{g}$.
{\em Simple forms} are defined similarly. Geometrically, a non-zero simple $k-$vector $w_1\wedge\ldots\wedge w_k$ gives rise to a $k-$plane,
$\Span\{w_1,\ldots ,w_k\}$. In fact, $v\in \mathfrak{g}$ belongs to $\Span\{w_1,\ldots ,w_k\}$ if and only if $v\wedge (w_1\wedge\ldots \wedge w_k)=0$.
If $\alpha\in\Lambda^k(\mathfrak{g}^*)$ and $\{v_1,\ldots,v_k\}\subset\mathfrak{g}$, our convention is that
$\alpha(v_1\wedge \cdots \wedge v_k)=\alpha(v_1,\ldots ,v_k)$.

If we choose a basis $\{w_i\}$ of $\mathfrak{g}$ and the dual basis $\{w^i\}$ of $\mathfrak{g}^*$, for each $k$,
there are associated bases of $\Lambda^k(\mathfrak{g})$ and $\Lambda^k(\mathfrak{g}^*)$ consisting of simple $k-$vectors and $k-$forms. By considering such bases, it is easy to prove

\begin{lemma}\label{pairing}
1. $\Phi^k_{\zeta}$ and $\Phi_{\eta}^k$ are non-degenerate pairings for each $k$.

2. The linear maps $$
G^k_\eta:\Lambda^k(\mathfrak{g})\to\Lambda^{2n-k}(\mathfrak{g}^*)\,, \quad G^{2n-k}_\zeta:\Lambda^{2n-k}(\mathfrak{g}^*)\to\Lambda^{k}(\mathfrak{g})
$$
defined by
$$
G^k_\eta(u)(w)=\Phi_{\eta}^k(u,w)=\eta (u\wedge w),\,\quad G^{2n-k}_\zeta(\alpha)(\beta)=\Phi_{\zeta}^{2n-k}(\alpha, \beta)= (\alpha\wedge \beta)(\zeta)
$$
respectively, are inverses of each other.


3. For $\alpha\in \Lambda^k(\mathfrak{g}^*)$, $\beta\in \Lambda^{2n-k}(\mathfrak{g}^*)$,
\begin{equation}\label{gg}
T_{G_{\zeta}^k(\alpha)}(\beta)=T_{\zeta}(\beta\wedge \alpha).
\end{equation}

4. For $\alpha\in\Lambda^{2n-k}(\mathfrak{g}^*)$ and $\beta\in\Lambda^k(\mathfrak{g}^*)$,
\begin{equation}\label{gg'}
\Phi_{\eta}^k(G^{2n-k}_{\zeta}(\alpha), G^{k}_{\zeta}(\beta))=\Phi_{\zeta}^{k}(\beta, \alpha).
\end{equation}

\end{lemma}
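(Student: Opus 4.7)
The plan is to reduce all four assertions to an elementary computation in a carefully chosen basis. Pick a basis $\{w_1, \ldots, w_{2n}\}$ of $\mathfrak{g}$ with dual basis $\{w^1, \ldots, w^{2n}\}$; by rescaling we may assume $\zeta = w_1 \wedge \cdots \wedge w_{2n}$ and $\eta = w^1 \wedge \cdots \wedge w^{2n}$, consistent with $\Psi^{2n}(\zeta, \eta) = 1$. For each multi-index $I = (i_1 < \cdots < i_k) \subset \{1, \ldots, 2n\}$, write $w_I = w_{i_1} \wedge \cdots \wedge w_{i_k}$ and $w^I = w^{i_1} \wedge \cdots \wedge w^{i_k}$, denote by $I^c$ the ordered complementary index, and define the sign $\epsilon_I \in \{\pm 1\}$ by $w_I \wedge w_{I^c} = \epsilon_I \zeta$; the same sign satisfies $w^I \wedge w^{I^c} = \epsilon_I \eta$.

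For part 1, in the multi-index bases of $\Lambda^k(\mathfrak{g})$ and $\Lambda^{2n-k}(\mathfrak{g})$ the matrix of $\Phi^k_\eta$ has entries $\eta(w_I \wedge w_J)$, which vanish unless $J = I^c$ and equal $\epsilon_I$ otherwise. This is a signed permutation matrix, so $\Phi^k_\eta$ is non-degenerate, and the same argument applies to $\Phi^k_\zeta$. Part 2 then follows once the basis actions $G^k_\eta(w_I) = \epsilon_I w^{I^c}$ and $G^{2n-k}_\zeta(w^J) = \epsilon_J w_{J^c}$ are computed: the composition in either direction produces $\epsilon_I \epsilon_{I^c} \cdot \id$ on basis elements, and the compatibility between $\epsilon_I$ and $\epsilon_{I^c}$ arising from graded commutativity yields the identity. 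Part 3 is immediate by unwinding definitions, $T_{G^k_\zeta(\alpha)}(\beta) = G^k_\zeta(\alpha)(\beta) = (\alpha \wedge \beta)(\zeta)$, and comparing with $T_\zeta(\beta \wedge \alpha)$ via graded commutativity. Part 4 follows by substituting the basis formulas from part 2 into both sides and matching, or by combining parts 2 and 3.

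The only delicate point is the sign bookkeeping in parts 2 and 4: the $\epsilon_I$, the $\epsilon_{I^c}$, and the graded-commutativity sign $(-1)^{k(2n-k)}$ all appear, and they must be tracked consistently. Once these relations are pinned down on basis elements, each of the four assertions reduces to a routine linear-algebra check; no structure from the Lie bracket is needed, only the exterior algebra on $\mathfrak{g}$ and $\mathfrak{g}^*$.
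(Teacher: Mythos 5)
Your proposal is correct and is exactly the argument the paper intends: the paper offers no written proof beyond "by considering such bases, it is easy to prove," and your multi-index computation with the signs $\epsilon_I$ is that verification. One small caveat on your sign bookkeeping: $\epsilon_I\epsilon_{I^c}=(-1)^{k(2n-k)}=(-1)^k$, so for odd $k$ the composition $G^{2n-k}_\zeta\circ G^k_\eta$ is $(-1)^k\,\id$ rather than $\id$ (and similarly the graded-commutativity sign in part 3 does not cancel); this discrepancy is already present in the lemma as stated in the paper and is harmless in the even degrees (e.g. $k=2$, $2n=4$) where the lemma is actually applied, but your claim that the signs "yield the identity" should be stated with this qualification.
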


\subsection{Almost complex structures and almost Hermitian structures}\label{$J$-decompositions}
Let $J$ be an almost complex structure on $\mathfrak{g}$. Then $J$ induces a natural orientation on $\mathfrak{g}$, which we assume that it is the same as
the fixed one.

Recall that $J$ acts as an involution
 on $\Lambda^2(\mathfrak{g}^*)$, $\alpha(\cdot, \cdot)
\rightarrow \alpha(J\cdot, J\cdot)$. And in Definition \ref{invariant-antiinvariant} we introduce
$\mathcal Z_J^{\pm} = \mathcal Z^2 \cap \Lambda_J^{\pm}(\mathfrak{g}^*)$, and $H_J^{\pm}(\mathfrak{g})$ as the quotient
\begin{equation} \label{pm-quot-desc} {\mathcal Z^{\pm}\over \mathcal B\cap \Lambda^{\pm}_J(\mathfrak{g}^*)}.
\end{equation}
Dually, $J$ also acts on $\Lambda^2(\mathfrak{g})$ as an involution: $J(u\wedge v)=Ju\wedge Jv$.
Accordingly, we also have the $J-$decomposition:
$
\Lambda^2(\mathfrak{g})=\Lambda_J^+(\mathfrak{g})\oplus \Lambda_J^-(\mathfrak{g})\,,
$
which gives rise to
the subgroups $H^J_{\pm} (\mathfrak{g})$ as
 the quotient
\begin{equation} \label{pm-quot-desc-homology} { Z^{\pm}\over B\cap \Lambda^{\pm}_J(\mathfrak{g})}.
\end{equation}

Let $\pi_{\pm}:\Lambda^2(\mathfrak{g})\to \Lambda_J^{\pm}(\mathfrak{g})$ be the projection, and
$$
\Psi_\pm:\Lambda^\pm_J(\mathfrak{g})\times \Lambda^\pm_J(\mathfrak{g}^*)\to \R
$$
be the restriction of $\Psi^2$ to $\Lambda^\pm_J(\mathfrak{g})\times \Lambda^\pm_J(\mathfrak{g}^*)$.

\begin{lemma}\label{closed-exact} We have \smallskip

i). $(\Lambda^\pm_J(\mathfrak{g}), \Lambda^\mp_J(\mathfrak{g}^*))$ is a $\Psi^2-$complementary pair.\smallskip

ii). $\Psi_{\pm}$ is non-degenerate.

iii). $\bar \Psi^2(H_J^{\pm}(\mathfrak{g}), H^J_{\mp}(\mathfrak{g}))=0$.

iv). $(\mathcal Z^\pm, \pi_{\pm}B^2)$ and $(\mathcal B\cap \Lambda^\pm_J(\mathfrak{g}^*), \pi_{\pm}Z^2)$ are $\Psi_\pm-$complementary pairs.

v). There is an induced isomorphism $$\sigma:H_J^\pm(\mathfrak{g})={\mathcal Z^\pm_J
\over \mathcal B\cap \Lambda^\pm_J(\mathfrak{g^*})}\longrightarrow \hom ({\pi_{\pm}Z^2\over \pi_{\pm}B^2}, \R).$$
\end{lemma}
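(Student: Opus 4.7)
The plan is to read all five statements off the complementary-pair machinery already assembled in Lemmas \ref{complement}, \ref{involution} and \ref{closed-exact'}, once we record the key compatibility: the two $J$-involutions on $\Lambda^2(\mathfrak{g})$ and $\Lambda^2(\mathfrak{g}^*)$ are compatible with $\Psi^2$, because on simple elements
\[
\Psi^2\bigl(Ju\wedge Jv,\ \alpha(J\cdot,J\cdot)\bigr)=\alpha(J^2 u,J^2 v)=\alpha(u,v)=\Psi^2(u\wedge v,\alpha),
\]
which is exactly the hypothesis \eqref{involution-invariance} of Lemma \ref{involution}.

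Parts (i) and (ii) then follow immediately from Lemma \ref{involution} applied to $\Gamma=\Psi^2$ with the two $J$-involutions: its second bullet gives the complementary pair, and its third bullet gives the non-degeneracy of $\Psi_\pm$. For (iii), pick any representatives $\alpha\in\mathcal Z_J^\pm\subset\Lambda_J^\pm(\mathfrak{g}^*)$ and $u\in Z^\mp\subset\Lambda_J^\mp(\mathfrak{g})$ of classes in $H_J^\pm(\mathfrak{g})$ and $H^J_\mp(\mathfrak{g})$ respectively; the first bullet of Lemma \ref{involution} (equivalently part (i)) gives $\bar\Psi^2([u],[\alpha])=\Psi^2(u,\alpha)=0$.

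The substantive step is (iv). I would prove each of the two complementary pairs by establishing one of the two orthogonal-complement equalities directly, and then invoke the third bullet of Lemma \ref{complement} (together with (ii)) to promote it to a full complementary-pair statement. For $(\mathcal Z^\pm,\pi_\pm B^2)$: if $\alpha\in\mathcal Z^\pm$ and $u\in B^2$, split $u=\pi_+u+\pi_-u$; by (i) the cross-term $\Psi^2(\pi_\mp u,\alpha)$ vanishes, while $\Psi^2(u,\alpha)=0$ by Lemma \ref{closed-exact'}, so $\Psi_\pm(\pi_\pm u,\alpha)=0$. Conversely, if $\alpha\in\Lambda_J^\pm(\mathfrak{g}^*)$ annihilates every $\pi_\pm u$ with $u\in B^2$, the same splitting forces $\Psi^2(u,\alpha)=0$ for all $u\in B^2$, hence $\alpha\in\mathcal Z^2\cap\Lambda_J^\pm(\mathfrak{g}^*)=\mathcal Z^\pm$ by Lemma \ref{closed-exact'}. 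The pair $(\mathcal B\cap\Lambda_J^\pm(\mathfrak{g}^*),\pi_\pm Z^2)$ is handled by a formally identical argument, swapping the roles of $(\mathcal Z^2,\mathcal B^2)$ and $(Z^2,B^2)$ in the last appeal to Lemma \ref{closed-exact'}.

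Finally, for (v), set $V_0=\pi_\pm Z^2$, $V_1=\pi_\pm B^2$, $W_0=\mathcal B\cap\Lambda_J^\pm(\mathfrak{g}^*)$ and $W_1=\mathcal Z^\pm$, so that $V_1\subset V_0$ and $W_0\subset W_1$. By (iv), both $(V_0,W_0)$ and $(V_1,W_1)$ are $\Psi_\pm$-complementary pairs, so the fourth bullet of Lemma \ref{complement} produces a non-degenerate induced pairing
\[
\pi_\pm Z^2/\pi_\pm B^2\,\times\,H_J^\pm(\mathfrak{g})\longrightarrow\R,
\]
which in finite dimensions is equivalent to the claimed isomorphism $\sigma$. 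The only genuine content in the whole argument is the involution-compatibility of $\Psi^2$ displayed above and the direct computation carried out in (iv); everything else is mechanical, and the main bookkeeping pitfall is keeping the inclusions $V_1\subset V_0$ and $W_0\subset W_1$ aligned with the correct inclusions among cycles, boundaries, exact forms and $\mathcal Z^\pm$ when invoking the fourth bullet of Lemma \ref{complement} in (v).
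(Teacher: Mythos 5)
Your proposal is correct and follows essentially the same route as the paper: (i)--(ii) via Lemma \ref{involution}, (iii) from (i) and \eqref{pm-quot-desc}, (iv) from the identity $\Psi^2(\pi_\pm u,\alpha)=\Psi^2(u,\alpha)$ for $\alpha\in\Lambda_J^\pm(\mathfrak{g}^*)$ together with Lemma \ref{closed-exact'} and the third bullet of Lemma \ref{complement}, and (v) from (iv) and the fourth bullet of Lemma \ref{complement}. You merely spell out the details (the involution-invariance check and the two-sided orthogonality argument in (iv)) that the paper leaves implicit.
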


\begin{proof} Since the $J-$actions on $\Lambda^2(\mathfrak{g})$ and $\Lambda^2(\mathfrak{g}^*)$
are involutions and satisfy \eqref{involution-invariance} with respect to the pairing $\Psi^2$,
(i) and (ii) follow from the second and the third bullets of Lemma \ref{involution}.

iii) follows from i) and the description of $H^{\pm}_J(\mathfrak{g})$ in \eqref{pm-quot-desc}.

iv) follows easily from
$(\alpha, \pi_\pm u)=(\alpha, u)$ for $\alpha\in \Lambda^\pm_J(\mathfrak{g}^*)$ and $u\in \Lambda^2(\mathfrak{g})$, and Lemma \ref{closed-exact'}.

v) follows from iv) and the 4th bullet of Lemma \ref{complement}.
\end{proof}

We note that, under $\bar \Psi^2$, the pairings between $H_J^{\pm}(\mathfrak{g})$ and $H^J_{\pm}(\mathfrak{g})$ may not be non-degenerate.
It will be the case when $\mathfrak{g}$ is 4-dimensional and unimodular (Remark \ref{homology decomposition}).

Let now $h$ be an inner product on an oriented $2n$-dimensional Lie algebra $\mathfrak{g}$.
Let $*_h:\Lambda^k(\mathfrak{g}^*)\to \Lambda^{2n-k}(\mathfrak{g}^*)$ be the Hodge-operator associated with $h$, namely,
given $\beta\in\Lambda^k(\mathfrak{g}^*)$,
for every $\alpha\in\Lambda^k(\mathfrak{g}^*)$ define $*_h \beta$ by the following relation
\begin{equation}\label{star}
\alpha\wedge *_h \beta =\langle \alpha,\beta \rangle\hbox{Vol}_h\,,
\end{equation}
where $\langle,\rangle$ is the inner product on forms induced by $h$ and
$\hbox{\rm Vol}_h$ is the
volume form of $h$ with respect to the given orientation.
Let $\eta=\hbox{\rm Vol}_h\in\Lambda^{2n}(\mathfrak{g}^*)$ and choose $\zeta\in\Lambda^{2n}(\mathfrak{g})$ with $\Psi^{2n}(\zeta, \eta)=1$. Then
\begin{equation}\label{phi-star}
\Phi^k_{\zeta}(\alpha, *_h\beta)=\langle \alpha,\beta \rangle.
\end{equation}

For every $\alpha\in\Lambda^p(\mathfrak{g}^*)$ set, as usual,
$$
\delta \alpha= (-1)^{n(p+1)+1} *_h\circ d\circ *_h\,.
$$
Then, set $\Delta = d\delta +\delta d$ and denote by
$$
\mathcal{H}^p({\mathfrak g})=\{\alpha\in\Lambda^p(\mathfrak{g}^*)\,\,\,
\vert\,\,\, \Delta\,\alpha =0\}\,,
$$
the space of $h$-harmonic $p$-forms.

Let $J$ be an almost
complex structure on ${\mathfrak g}$. The inner product $h$ is called $J$-{\em Hermitian} if
$J$ is an orthogonal transformation with respect to $h$.
In this case, the pairing $\varphi$ defined by
$\varphi (\cdot ,\cdot )=h(J\cdot,\cdot)$ lies in
 $\Lambda_J^+(\mathfrak{g}^*)$, and it is called the fundamental form
of $(J,h)$.
The pair $(J, h)$, or equivalently, the triple $(J, h, \varphi)$ is called an {\em almost Hermitian structure}.

With this understood, $J$ is {\em almost K\"ahler} if there is a $J$-Hermitian $h$ such that
the associated fundamental form is closed.
\subsection{Complex structures}
Let $\mathfrak{g}$ be a $2n$-dimensional Lie algebra. An almost complex structure $J$ on $\mathfrak{g}$ is
said to be {\em integrable} if $N_J=0$, where
$$
N_J(u, v)=[u, v]-J[Ju, v]-J[u, Jv]-[Ju, Jv],
$$
for all $u,\,v\in\mathfrak{g}$. Denoting by $\mathfrak{g}_\C=\mathfrak{g}\otimes_\R \C$ the complexification of $\mathfrak{g}$,
then $\mathfrak{g}_\C=\mathfrak{g}^{1,0}\oplus \mathfrak{g}^{0,1}$, where $\mathfrak{g}^{1,0}$, $\mathfrak{g}^{0,1}$, respectively,
are the $+{\bf i}$, $-{\bf i}$ eigenspaces of $J$ acting on $\mathfrak{g}_\C$. Alternatively,
$$
\mathfrak{g}^{1,0}=\{u-{\bf i}Ju\,\,\,\vert\,\,\, u\in \mathfrak{g}\},\quad
\mathfrak{g}^{0,1}=\{u+{\bf i}Ju\,\,\,\vert\,\,\, u\in \mathfrak{g}\}.
$$
Similarly, $\mathfrak{g}_\C^*$ decomposes as $\mathfrak{g}_\C^*=\mathfrak{g}_{1,0}\oplus\mathfrak{g}_{0,1}$, where
$$
\mathfrak{g}_{1,0}=\{\alpha\in\mathfrak{g}_\C^*\,\,\,\vert\,\,\, \alpha(z)=0\,,\,\,\forall z\in\mathfrak{g}^{0,1}\},\quad
\mathfrak{g}_{0,1}=\{\alpha\in\mathfrak{g}_\C^*\,\,\,\vert\,\,\, \alpha(z)=0\,,\,\,\forall z\in\mathfrak{g}^{1,0}\}
$$
Accordingly, the space of complex $r$-forms $\Lambda^r(\mathfrak{g}_\C^*)$ on $\mathfrak{g}^\C$ decomposes as
$$
\Lambda^r(\mathfrak{g}_\C^*) = \bigoplus_{p+q=r}\Lambda^{p,q}_J(\mathfrak{g}_\C^*),
$$
where $\Lambda^{p,q}_J(\mathfrak{g}_\C^*)=\Lambda^p(\mathfrak{g}_{1,0})\wedge \Lambda^q(\mathfrak{g}_{0,1})$.\newline
A straightforward computation shows that $N_J=0$ if and only if $[\mathfrak{g}^{0,1},\mathfrak{g}^{0,1}]\subset \mathfrak{g}^{0,1}$. \newline
It is immediate to prove the following
\begin{prop} \label{integrability} $J$ is integrable if there exists a non-zero closed $(n, 0)$-form on $\mathfrak{g}$.
\end{prop}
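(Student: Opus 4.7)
The plan is to fix a basis $\{e^1,\ldots,e^n\}$ of the complex vector space $\mathfrak{g}_{1,0}$, with conjugate basis $\{\bar e^1,\ldots,\bar e^n\}$ of $\mathfrak{g}_{0,1}$. Since $\Lambda^{n,0}_J(\mathfrak{g}_\C^*)$ is one-dimensional over $\C$, any non-zero closed $(n,0)$-form $\Omega$ is a non-zero scalar multiple of $e^1\wedge\cdots\wedge e^n$, and after rescaling I may take $\Omega=e^1\wedge\cdots\wedge e^n$. The remark immediately preceding the statement identifies integrability with $[\mathfrak{g}^{0,1},\mathfrak{g}^{0,1}]\subset\mathfrak{g}^{0,1}$; using the Chevalley--Eilenberg formula $d\alpha(X,Y)=-\alpha([X,Y])$, this is in turn equivalent to the vanishing of the $(0,2)$-component $(de^i)^{0,2}\in\Lambda^{0,2}_J(\mathfrak{g}_\C^*)$ for every $i=1,\ldots,n$. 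So the goal reduces to extracting this vanishing from $d\Omega=0$.

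The next step is to expand, by the Leibniz rule,
\[
d\Omega=\sum_{i=1}^{n}(-1)^{i-1}\,e^{1}\wedge\cdots\wedge e^{i-1}\wedge de^{i}\wedge e^{i+1}\wedge\cdots\wedge e^{n},
\]
and decompose each $de^{i}$ into its $(2,0)$, $(1,1)$ and $(0,2)$ parts. A bidegree count shows that the $(2,0)$ parts contribute to $\Lambda^{n+1,0}_J=0$, the $(1,1)$ parts contribute to $\Lambda^{n,1}_J$, and the $(0,2)$ parts contribute to $\Lambda^{n-1,2}_J$. Since the three bidegree components of $d\Omega$ vanish independently, I can focus on the $(n-1,2)$-piece.

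Writing $(de^{i})^{0,2}=\sum_{k<l}c^{i}_{kl}\,\bar e^{k}\wedge\bar e^{l}$, the $(n-1,2)$-component of $d\Omega$ is
\[
\sum_{i=1}^{n}(-1)^{i-1}\sum_{k<l}c^{i}_{kl}\,\bigl(e^{1}\wedge\cdots\wedge\widehat{e^{i}}\wedge\cdots\wedge e^{n}\bigr)\wedge\bar e^{k}\wedge\bar e^{l}.
\]
Since $\Lambda^{n-1,2}_J(\mathfrak{g}_\C^*)=\Lambda^{n-1}(\mathfrak{g}_{1,0})\wedge\Lambda^{2}(\mathfrak{g}_{0,1})$, and each factor has the obvious basis coming from $\{e^i\}$ and $\{\bar e^j\}$, the $n\binom{n}{2}$ forms $(e^{1}\wedge\cdots\wedge\widehat{e^{i}}\wedge\cdots\wedge e^{n})\wedge\bar e^{k}\wedge\bar e^{l}$ are linearly independent. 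Hence the displayed sum vanishes only if $c^{i}_{kl}=0$ for all $i,k,l$, giving $(de^{i})^{0,2}=0$ for every $i$ and therefore integrability of $J$.

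There is no substantive obstacle: the argument is a bidegree decomposition of $d\Omega$ combined with a dimension count in $\Lambda^{n-1,2}_J$. The only point of care is to isolate the correct bidegree piece — namely $(n-1,2)$, where the $(0,2)$-parts of the $de^i$ can be read off cleanly — rather than the $(n,1)$-piece, whose vanishing only produces a trace-type condition on the $(1,1)$-parts of the $de^i$ and is not needed for integrability.
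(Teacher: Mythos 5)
Your proof is correct, and since the paper states this proposition without proof (calling it ``immediate''), your bidegree argument is exactly the intended one: expand $d\Omega$ by the Leibniz rule, isolate the $(n-1,2)$-component, and use linear independence in $\Lambda^{n-1}(\mathfrak{g}_{1,0})\wedge\Lambda^{2}(\mathfrak{g}_{0,1})$ to conclude $(de^i)^{0,2}=0$, which via $d\alpha(u,v)=-\alpha([u,v])$ is the condition $[\mathfrak{g}^{0,1},\mathfrak{g}^{0,1}]\subset\mathfrak{g}^{0,1}$, i.e.\ $N_J=0$. No gaps; the remark about not needing the $(n,1)$-piece is also accurate.
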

It can be also proved that the integrability condition is equivalent to
$$
d\mathfrak{g}_{1,0}\subset \mathfrak{g}_{1,1}\oplus \mathfrak{g}_{2,0},
$$
where the differential $d$ is extended by $\C$-linearity. Let $J$ be an almost complex structure on $\mathfrak{g}$. Set
$\overline{\partial}\vert_{\Lambda^{p,q}_J(\mathfrak{g}_\C^*)}=\pi^{(p,q+1)}\circ d$. Then
$$
\overline{\partial}:\Lambda^{p,q}_J(\mathfrak{g}_\C^*)\to\Lambda^{p,q+1}_J(\mathfrak{g}_\C^*),
$$
and $\overline{\partial}$ can be extended by $\C$-linearity to $\Lambda^r(\mathfrak{g}_\C^*)$. Then, $J$ is integrable if and only if
$\overline{\partial}^2=0$. This allows to define a Dolbeault complex for $\Lambda^{p,q}_J(\mathfrak{g}_\C^*)$. We denote by
$H^{*,*}_{\overline{\partial}}(\mathfrak{g})$ the cohomology of this complex. We refer to it as the {\em Dolbeault cohomology} of
$\mathfrak{g}$.
\subsection{Tamed and almost K\"ahler structures in terms of cone of positive vectors} We characterize tamed and almost K\"ahler
almost complex structures on a Lie algebra in terms of the convex cone of positive vectors
(compare with \cite[Theorem 3.2]{sullivan}, \cite[Theorem 14]{HL} and \cite{LZ}).

Clearly, any simple $2-$vector of the form $v\wedge Jv$ is invariant:
$$
J(v\wedge Jv)=Jv\wedge JJv=-Jv\wedge v=v\wedge Jv.
$$
\begin{definition}\label{positive}
A simple $2-$vector is called {\em positive} if it is of the form $v\wedge Jv$.
A vector in $\Lambda^+_J(\mathfrak{g})$ is said to be {\em positive} if it can be written as a convex combination of $v\wedge Jv$.
\end{definition}

\begin{definition}
We denote by $PC_J$ the convex cone of positive 2-vectors, and we set
$$
HPC_J=\{[u]\,\,\,|\,\,\,u\in PC_J\cap Z\},
$$
which is a convex cone of $H^J_+(\mathfrak{g})$.
\end{definition}
We will need
the following finite dimensional Hahn-Banach lemma, which will be applied to the bilinear pairings $\Psi^2$ and $\Psi^+$ introduced in \ref{bilinearpairings}.
\begin{lemma}\label{Hahn-Banach}
Let $V$ be a finite dimensional vector space and $W$ a linear subspace. If $K$ is a bounded closed convex subset
which is disjoint from $W$. Then there is a linear functional $L$ vanishing on $W$ and strictly positive on $K$.
\end{lemma}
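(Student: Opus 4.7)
The plan is to reduce to the standard separation of a compact convex set from the origin by passing to the quotient $V/W$. Let $\pi:V\to V/W$ denote the canonical projection. Since $V$ is finite dimensional, the hypotheses that $K$ is bounded and closed make $K$ compact, so $\bar K := \pi(K)$ is compact in $V/W$; in particular it is closed, bounded, and convex. The disjointness assumption $K\cap W=\emptyset$ translates into $0\notin\bar K$.

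Next I would separate $0$ from $\bar K$ in $V/W$ by a strictly positive linear functional, using a nearest-point construction. Fix any inner product $\langle\cdot,\cdot\rangle$ on the finite dimensional space $V/W$; by compactness of $\bar K$ there exists $\bar x_0\in\bar K$ minimizing the distance to $0$. Define $\bar L(\bar x) := \langle \bar x, \bar x_0\rangle$. For any $\bar x\in\bar K$ and $t\in[0,1]$, convexity gives $(1-t)\bar x_0+t\bar x\in\bar K$, so
$$
\|(1-t)\bar x_0+t\bar x\|^2 \geq \|\bar x_0\|^2.
$$
Expanding and letting $t\to 0^+$ yields $\langle \bar x-\bar x_0,\bar x_0\rangle\geq 0$, hence $\bar L(\bar x)\geq \|\bar x_0\|^2>0$ on all of $\bar K$.

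Finally I would pull back: set $L:=\bar L\circ \pi$. Then $L$ is linear on $V$, vanishes identically on $W=\ker\pi$, and satisfies $L(x)\geq \|\bar x_0\|^2>0$ for every $x\in K$. This is exactly the functional required by the statement.

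There is no real obstacle. The only delicate point is that the separating inequality must be \emph{strict} on $K$, which depends on $\bar K$ being compact and not merely closed (an unbounded closed convex set could accumulate at the origin and only be weakly separated). Both properties of $\bar K$ follow automatically from the compactness of $K$ in the finite dimensional space $V$.
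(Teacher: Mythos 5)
Your proof is correct: projecting to $V/W$ turns the problem into separating the compact convex set $\pi(K)$ from the origin, and the nearest-point argument gives the strict inequality $\bar L(\bar x)\geq \|\bar x_0\|^2>0$, which pulls back to the required functional vanishing on $W=\ker\pi$. The paper itself states this finite-dimensional Hahn--Banach lemma without proof, treating it as standard, so there is no argument in the paper to compare against; your write-up simply supplies the omitted (and entirely standard) proof, including the correct observation that boundedness of $K$ is what guarantees $\pi(K)$ is compact rather than merely convex, which is where strict separation could otherwise fail.
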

Let $h$ be a $J$-Hermitian metric on $\mathfrak{g}$ and let $\varphi$ be the fundamental $2$-form of $h$. Set
$$
K=\{w\in PC_J\,\,\,\vert\,\,\, T_w(\varphi)=1\}.
$$
Then $K\subset\, PC_J$ is bounded, closed and convex.

\begin{prop}\label{no-invariant-positive-boundary}
Let $J$ be an almost complex
structure on $\mathfrak{g}$.

\begin{itemize}
\item $J$ is tamed if and only if $B^2\cap PC_J=\{0\}$.
There is a bijection between $J-$tamed forms and
functionals on $\Lambda^2(\mathfrak{g})$ vanishing on
${B}^2$ and positive on $PC_J\backslash \{0\}$.
\item $J$ is almost K\"ahler if and only if $\pi_+B^2\cap PC_J=\{0\}$.
There is a bijection between $J-$compatible forms and
functionals on $\Lambda_J^+(\mathfrak{g})$ vanishing on
${\pi_{+}B^2}$ and positive on $PC_J\backslash \{0\}$.
\end{itemize}
\end{prop}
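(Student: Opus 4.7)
The plan is to translate both statements into linear separation problems via the non-degenerate pairings $\Psi^2$ and $\Psi_+$, and then invoke the finite dimensional Hahn-Banach lemma (Lemma \ref{Hahn-Banach}).

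For the first bullet, via $\Psi^2$ every 2-form $\varphi$ is identified with the functional $u\mapsto \varphi(u)$ on $\Lambda^2(\mathfrak{g})$; this identification is a bijection by non-degeneracy of $\Psi^2$. By Lemma \ref{closed-exact'}, closedness of $\varphi$ is equivalent to the functional vanishing on $B^2$, and the taming condition $\varphi(v,Jv)>0$ for all $v\neq 0$ is, by convexity and the description of $PC_J$ in Definition \ref{positive}, equivalent to strict positivity of the functional on $PC_J\setminus\{0\}$. This already gives the stated bijection, and in particular the ``only if'' direction. Conversely, assume $B^2\cap PC_J=\{0\}$. Fix a $J$-Hermitian metric $h$ with fundamental form $\varphi_0$, and let $K=\{w\in PC_J\,:\,T_w(\varphi_0)=1\}$. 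I would verify that $K$ is a compact convex subset of $\Lambda^2(\mathfrak{g})$ disjoint from the linear subspace $B^2$, and then apply Hahn-Banach to produce a linear $L$ vanishing on $B^2$ and strictly positive on $K$. Since every nonzero $u\in PC_J$ satisfies $T_u(\varphi_0)>0$ and so rescales into $K$, the functional $L$ is strictly positive on all of $PC_J\setminus\{0\}$, and the unique 2-form representing $L$ under $\Psi^2$ is the desired $J$-tamed form.

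The second bullet is proved in parallel, now working in $\Lambda^+_J(\mathfrak{g})$ with the pairing $\Psi_+$ from Lemma \ref{closed-exact}. The key translation, which comes from Lemma \ref{closed-exact}(i), is that for $\alpha\in\Lambda^+_J(\mathfrak{g}^*)$ and $u\in\Lambda^2(\mathfrak{g})$ one has $\Psi^2(u,\alpha)=\Psi_+(\pi_+u,\alpha)$, so closedness of such an $\alpha$ is equivalent to the corresponding $\Psi_+$-functional vanishing on $\pi_+B^2$. Since $PC_J\subset\Lambda^+_J(\mathfrak{g})$ the positivity condition is unchanged, and the separation hypothesis becomes $\pi_+B^2\cap PC_J=\{0\}$. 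The same compact convex set $K$ now lives in $\Lambda^+_J(\mathfrak{g})$, and the Hahn-Banach argument, combined with non-degeneracy of $\Psi_+$, yields the $J$-compatible form.

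The main obstacle I anticipate is verifying that $K$ is genuinely compact. Convexity is immediate from Definition \ref{positive}; boundedness follows from the triangle inequality together with the identity $h(v,Jv)=0$, which gives $|v\wedge Jv|_h=|v|^2=\varphi_0(v,Jv)$, so that any $w=\sum u_i\wedge Ju_i\in K$ satisfies $|w|_h\leq \sum|u_i|^2 = T_w(\varphi_0) = 1$. Closedness is the slightly delicate point and can be handled either by a Caratheodory-type truncation (expressing any element of $K$ using at most $1+\dim\Lambda^+_J(\mathfrak{g})$ summands, whose parameter space is bounded by the previous estimate, and passing to a limit) or by a direct sequential compactness argument. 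Once $K$ is shown to be compact, everything else is a clean exercise in linear duality.
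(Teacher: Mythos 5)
Your proposal is correct and follows essentially the same route as the paper: identify forms with functionals via the non-degenerate pairings $\Psi^2$ and $\Psi_+$, use Lemma \ref{closed-exact'} (resp.\ Lemma \ref{closed-exact}(iv)) to translate closedness into vanishing on $B^2$ (resp.\ $\pi_+B^2$), and separate the same compact convex slice $K=\{w\in PC_J : T_w(\varphi_0)=1\}$ from the boundary subspace via Lemma \ref{Hahn-Banach}. Your extra verification that $K$ is bounded and closed simply fills in a step the paper asserts without proof.
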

\begin{proof} First of all we prove the characterization of tamed $J$.

$(\Longrightarrow)$. Suppose $J$ is tamed by a symplectic form $\omega$. Then $\Psi^2(B^2, \omega)=0$ by Lemma \ref{closed-exact'}, and $\Psi^2( PC_J\backslash \{0\}, \omega)>0$ by Definitions \ref{tc} and \ref{positive}. Thus $B^2\cap PC_J=\{0\}$.

$(\Longleftarrow)$. Conversely, suppose $B^2\cap PC_J=\{0\}$. Then $B^2\cap K =\emptyset$.
By Lemma \ref{Hahn-Banach}, there is a linear functional $L$ on $\Lambda^2(\mathfrak{g})$, vanishing on  $B^2$ and strictly positive on
$K$. By Lemma \ref{closed-exact'}, $L$ determines a closed
$2$-form, which is positive on $PC_J\backslash \{0\}$.
Such a form is a tamed symplectic form by Definition \ref{tc}.

The argument for characterizing an almost K\"ahler $J$ is similar. The only difference is that, instead of applying Lemma \ref{closed-exact'}
for the pairing $\Psi^2$, we apply Lemma \ref{closed-exact} (iv) for the pairing $\Psi_+$.

$(\Longrightarrow)$. Suppose $J$ is compatible with  a symplectic form $\omega$. Then $\omega\in \mathcal Z^{+}$, and we have $\Psi_+(\pi_+B^2, \omega)=0$ by Lemma \ref{closed-exact} (iv).  Moreover, $\Psi_+( PC_J\backslash \{0\}, \omega)>0$ by Definitions \ref{tc} and \ref{positive}, therefore $\pi_+B^2\cap PC_J=\{0\}$.

$(\Longleftarrow)$. Conversely, suppose $\pi_+B^2\cap PC_J=\{0\}$. Then $\pi_+B^2\cap K =\emptyset$, and
by Lemma \ref{Hahn-Banach}, there is a linear functional $L$ on $\Lambda_J^+(\mathfrak{g})$, vanishing on  $\pi_+B^2$ and strictly positive on
$K$. By Lemma \ref{closed-exact} (iv), $L$ determines a
$2$-form in $\mathcal Z^{+}$, which is positive on $PC_J\backslash \{0\}$.
Such a form is a compatible symplectic form by Definition \ref{tc}.

Finally, it is clear that both the bijection statements  follow from the arguments above.
\end{proof}
\begin{rem}
It has to be noted that complex structures which are tamed by a symplectic form are related to the so called {\em strong K\"ahler metrics with torsion} (shortly {\em SKT metrics}), also called in the literature as {\em pluriclosed} metrics. More precisely, a Hermitian metric on a complex Lie algebra is said to be SKT, if its fundamental form $\omega$ satisfies the condition $\partial\overline{\partial}\omega=0$ (the same definition can be given for an arbitrary complex manifold). The strong K\"ahler metrics with torsion have also applications in type II string theory
and in 2-dimensional supersymmetric $\sigma$-models \cite{GHR,S} and are related to generalized K\"ahler structures (see \cite{G}). \newline
It can be easily proved that if $J$ is a complex structure on a Lie algebra $\mathfrak{g}$ which is tamed by a symplectic form, then $\mathfrak{g}$ admits an SKT metric. For the sake of completeness we record the argument (see e.g. \cite[Proposition 3.4]{AT}). Indeed, let $\omega$ be a symplectic form on $\mathfrak{g}$ taming $J$. Denote
by $\tilde\omega$ the fundamental form of the Hermitian metric
$$
\tilde{g}_J(u,v)=\frac{1}{2}\left(\omega(u,Jv)+
\omega(v,Ju)\right).
$$
Then we easily obtain that
$$
\partial\overline{\partial}\,\tilde\omega =0.
$$
Indeed, by definition, we have
$$
\tilde{\omega} =\frac{1}{2}\left(\omega+ J\omega\right).
$$
Then, we can write
\begin{eqnarray*}
\omega &=& \omega^{2,0} +
\omega^{1,1}+\overline{\omega^{2,0}}\,,\\
\tilde{\omega} &=&\omega^{1,1}\,,
\end{eqnarray*}
where $\overline{\omega^{1,1}}=\omega^{1,1}$. Consequently
$$
d\omega = 0\,\quad\Leftrightarrow \,\quad \left\{
\begin{array}{rl}
\partial \omega^{2,0} & = 0\,,\\[10pt]
\partial\omega^{1,1} + \overline{\partial}\omega^{2,0} &=0\,,
\end{array}
\right.
$$
since $d=\partial +\overline{\partial}$. Therefore,
$$
\partial\overline{\partial}\tilde{\omega}=\partial\overline{\partial}\omega^{1,1}=
-\overline{\partial}\partial\omega^{1,1}=\overline{\partial}^2\omega^{2,0}=0\,.
$$
\end{rem}
\section{Tamed versus almost K\"ahler in dimension $4$}\label{tamed-versus-compatible}
In this section we prove Theorem \ref{tamed-and-compatible-intro}. We begin with reviewing the special feature of $\Lambda^2(\mathfrak{g})$ in dimension $4$.
\subsection{Geometry of $\Lambda^2(\mathfrak{g})$}
Suppose $\mathfrak{g}$ is a $4-$dimensional oriented Lie algebra.
In this case $\Lambda^2(\mathfrak{g})$ and $\Lambda^2(\mathfrak{g}^*)$ have dimension 6. Fix a basis $\{f_1,\ldots ,f_4\}$ of
$\mathfrak{g}$ and denote by $\{f^1,\ldots ,f^4\}$ the dual basis of $\{f_1,\ldots ,f_4\}$. Then
$\{f_i\wedge f_j\}$ and $\{f^i\wedge f^j\}$ are bases of $\Lambda^2(\mathfrak{g})$ and $\Lambda^2(\mathfrak{g}^*)$.

 Let $\zeta$ be a nonzero $4-$vector on
$\mathfrak{g}$ inducing the given orientation, and $\eta$ a volume form with $\Psi^4(\zeta, \eta)=1$.
For ease of notations, denote the pairings $\Psi^2, \Phi_{\zeta}^2, \Phi_{\eta}^2$ by
$\Psi, \Phi_{\zeta}, \Phi_{\eta}$ respectively. Explicitly,
$$\Psi :\Lambda^2(\mathfrak{g})\times\Lambda^{2}(\mathfrak{g}^*)\to\R, \quad \Phi_{\zeta}:\Lambda^2(\mathfrak{g}^*)\times\Lambda^{2}(\mathfrak{g}^*)\to\R, \quad
\Phi_{\eta}:\Lambda^2(\mathfrak{g})\times\Lambda^{2}(\mathfrak{g})\to\R.
$$
Similarly denote $G^2_\eta:\Lambda^2(\mathfrak{g})\to\Lambda^{2}(\mathfrak{g}^*)$
by $G_{\eta}$, and $G^2_\zeta:\Lambda^2(\mathfrak{g}^*)\to\Lambda^{2}(\mathfrak{g})$ by $G_{\zeta}$.

Let $S$ denote the set of simple $2-$vectors on $\mathfrak{g}$.
The following easy observation will be useful.
\begin{lemma} \label{simple=0} Suppose $\mathfrak{g}$ has dimension $4$.
Then $u\in \Lambda^2(\mathfrak{g})$ is simple, i.e. $u\in S$ if and only if $\Phi_{\eta}(u,u)=0$.
\end{lemma}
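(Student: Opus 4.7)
The plan is to recognize this as the classical Pl\"ucker relation characterizing decomposable $2$-vectors in a $4$-dimensional space, and so equivalent to saying that the Grassmannian $\mathrm{Gr}(2,4)$ sits inside $\mathbb{P}(\Lambda^2(\mathfrak{g}))$ as the quadric cut out by $u\wedge u=0$.

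For the forward direction, if $u=v_1\wedge v_2$ is simple, then
$$u\wedge u = v_1\wedge v_2\wedge v_1\wedge v_2 = 0,$$
and hence $\Phi_\eta(u,u)=\eta(u\wedge u)=0$. This step is immediate.

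For the converse, I would invoke the standard normal form for a $2$-vector on a finite dimensional real vector space: any $u\in\Lambda^2(\mathfrak{g})$ admits a basis $\{e_1,e_2,e_3,e_4\}$ of $\mathfrak{g}$ in which
$$u = a_1\, e_1\wedge e_2 + a_2\, e_3\wedge e_4, \qquad a_1,a_2\in\R.$$
This is the block-diagonalization of the associated skew-symmetric matrix. Then
$$u\wedge u = 2a_1 a_2\, e_1\wedge e_2\wedge e_3\wedge e_4,$$
so the hypothesis $\Phi_\eta(u,u)=0$, which is equivalent to $u\wedge u=0$ since $\eta\neq 0$ and $\Lambda^4(\mathfrak{g})$ is one dimensional, forces $a_1 a_2=0$. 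Without loss of generality $a_2=0$, so $u=(a_1 e_1)\wedge e_2$ is simple.

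If one prefers to avoid the normal form, the same statement follows by a direct coordinate computation: expanding $u=\sum_{i<j}c_{ij}f_i\wedge f_j$ yields
$$u\wedge u = 2\bigl(c_{12}c_{34}-c_{13}c_{24}+c_{14}c_{23}\bigr)\, f_1\wedge f_2\wedge f_3\wedge f_4,$$
and the vanishing of this Pfaffian-type expression means exactly that the skew-symmetric matrix $[c_{ij}]$ has rank at most $2$, which by linear algebra is equivalent to $u$ being decomposable. Either route is routine; I do not see any genuine obstacle here, since the content is simply the Pl\"ucker relation in its minimal non-trivial dimension.
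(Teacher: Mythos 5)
Your proof is correct; the paper itself states this lemma without proof, treating it as the classical Pl\"ucker-relation fact, and both of your routes (the normal form $u=a_1\,e_1\wedge e_2+a_2\,e_3\wedge e_4$ and the Pfaffian coordinate computation) are the standard valid arguments, including the correct observation that $\Phi_\eta(u,u)=0$ is equivalent to $u\wedge u=0$ because $\Lambda^4(\mathfrak{g})$ is one dimensional and $\eta\neq 0$. Nothing further is needed.
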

There is a similar characterization of simple $2$-forms.
Thus $(\Lambda^2(\mathfrak{g}), \Phi_{\eta})$ and $(\Lambda^2(\mathfrak{g}^*), \Phi_{\zeta})$ have signature $(3,3)$.

Let $h$ be an inner product on $\mathfrak{g}$. In dimension $4$,
the Hodge operator $*_h$ on $\Lambda^2({\mathfrak g}^*)$ is another involution on $\Lambda^2({\mathfrak g}^*)$ and induces the well known decomposition
\begin{equation}\label{self-antiself-invariant}
\Lambda^2({\mathfrak g}^*)=\Lambda^+_h({\mathfrak g}^*)\oplus \Lambda^-_h({\mathfrak g}^*),
\end{equation}
where $\Lambda^+_h({\mathfrak g}^*)$, $\Lambda^+_h({\mathfrak g}^*)$ are the vector spaces of
self-dual, anti-self-dual $2$-forms on $\mathfrak{g}$ respectively.
Since $\langle \alpha,\beta\rangle= \langle \beta, \alpha\rangle$ it follows from
the formula \eqref{phi-star} that
\begin{equation}
\Phi_{\zeta}(\alpha, \beta)=\Phi_{\zeta}(*_h\alpha, *_h\beta).
\end{equation}
Thus it follows from Lemma \ref{involution} that
$\Lambda^+_h({\mathfrak g}^*)$ and $ \Lambda^-_h({\mathfrak g}^*)$ are a $\Phi_{\zeta}-$complementary pair, and
$\Phi_{\zeta}$ is non-degenerate on $\Lambda^{\pm}_h({\mathfrak g}^*)$.
In fact, $\Phi_{\zeta}$ is $\pm-$definite on $\Lambda^{\pm}_h({\mathfrak g}^*)$: for $\alpha\in \Lambda^{\pm}_h({\mathfrak g}^*)$,
\begin{equation}\label{star-star}
\Phi_{\zeta}(\alpha, \alpha)=\Phi_{\zeta}(\alpha, \pm*_h\alpha)=\langle \alpha,\pm*_h*_h\alpha\rangle=\pm |\alpha|^2.
\end{equation}
Since ($\Lambda^2({\mathfrak g}^*),\Phi_\zeta)$ has signature $(3, 3)$, $\Lambda^{\pm}_h({\mathfrak g}^*)$ has dimension $3$.

Let $J$ be an almost complex structure on $\mathfrak{g}$.
Then it follows from Lemma \ref{involution} that
$\Lambda_J^+(\mathfrak{g}^*)$ and $\Lambda_J^-(\mathfrak{g}^*)$ are a $\Phi_{\zeta}-$complementary pair, and
$\Phi_{\zeta}$ is non-degenerate on $\Lambda_J^{\pm}(\mathfrak{g}^*)$.
The same is true for $\Lambda_J^{\pm}(\mathfrak{g})$ with respect to $\Phi_{\eta}$.

Suppose further $h$ is $J$-Hermitian and $\varphi\in \Lambda^2(\mathfrak{g}^*)$ is the fundamental form
of $(J,h)$.
In this case, $\eta={1\over 2}\varphi\wedge \varphi$. One can easily verify the following
relations among the subspaces $\Lambda^{\pm}_{J}({\mathfrak g}^*), \Lambda^{\pm}_{h}({\mathfrak g}^*), \Span\{\varphi\}$ of $\Lambda^2({\mathfrak g}^*)$:
\begin{equation}\label{decomposition-invariant-1}
\Lambda^+_{J}({\mathfrak g}^*)=\R(\varphi)\oplus\Lambda^-_{h}({\mathfrak g}^*),\quad
\Lambda^+_{h}({\mathfrak g}^*)=\R(\varphi)\oplus\Lambda^-_{J}({\mathfrak g}^*),
\end{equation}
and
\begin{equation}\label{decomposition-invariant-2}
\Lambda^+_{J}({\mathfrak g}^*)\cap \Lambda^+_{h}({\mathfrak g}^*)=\R(\varphi),\quad
\Lambda^-_{J}({\mathfrak g}^*)\cap \Lambda^-_{h}({\mathfrak g}^*)=\{0\}.
\end{equation}
The following lemma is straightforward.
\begin{lemma}\label{invariant-pairing} Suppose $\mathfrak{g}$ has dimension $4$ and $J$ is an almost complex structure on $\mathfrak{g}$.
Consider the pairings $\Phi_{\eta}, \Phi_\zeta$ and the isomorphisms $G_{\eta}, G_{\zeta}$. We have

(i) $G_{\zeta}(\Lambda_J^{\pm}(\mathfrak{g}^*))=\Lambda_J^{\pm}(\mathfrak{g})$, and $\Phi_{\eta}(G_{\zeta}(\alpha), G_{\zeta}(\beta))=\Phi_\zeta(\beta, \alpha)$.

(ii) $\Lambda_J^+(\mathfrak{g})$ and $\Lambda_J^+(\mathfrak{g}^*)$ have signature $(1, 3)$.

(iii) $\Lambda_J^-(\mathfrak{g})$ and $\Lambda_J^-(\mathfrak{g}^*)$ are positive definite.

(iv) If $v\in PC_J$,
then $\Phi_{\eta}(v,v)\geq 0$.
\end{lemma}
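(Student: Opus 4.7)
The plan is to handle the four parts in turn, using a fixed $J$-Hermitian metric $h$ (with fundamental form $\varphi$, normalized so $\eta=\tfrac12\varphi\wedge\varphi$) to tie the $J$-decomposition to the Hodge decomposition \eqref{self-antiself-invariant}, and exploiting Lemma \ref{involution} throughout. For (i), I will show that $G_\zeta$ is $J$-equivariant. The $J$-actions on $\Lambda^2(\mathfrak{g})$ and $\Lambda^2(\mathfrak{g}^*)$ are $\Psi^2$-dual involutions, and $\Phi_\zeta$ is $J$-invariant since $\det J=1$ forces $J\zeta=\zeta$. The defining identity $\Psi^2(G_\zeta(\alpha),\beta)=\Phi_\zeta(\alpha,\beta)$ then gives
\[
\Psi^2(JG_\zeta(\alpha),\beta)=\Psi^2(G_\zeta(\alpha),J\beta)=\Phi_\zeta(\alpha,J\beta)=\Phi_\zeta(J\alpha,\beta)=\Psi^2(G_\zeta(J\alpha),\beta),
\]
whence $JG_\zeta=G_\zeta J$ and therefore $G_\zeta(\Lambda_J^{\pm}(\mathfrak{g}^*))=\Lambda_J^{\pm}(\mathfrak{g})$. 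The second identity in (i) is exactly \eqref{gg'} specialized to $k=2$, $n=2$.

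For (ii)--(iii), \eqref{star-star} says $\Phi_\zeta$ is positive (resp.\ negative) definite on $\Lambda_h^+(\mathfrak{g}^*)$ (resp.\ $\Lambda_h^-(\mathfrak{g}^*)$), each of dimension $3$, and Lemma \ref{involution} applied to $*_h$ makes these two subspaces $\Phi_\zeta$-orthogonal. The first identity in \eqref{decomposition-invariant-1}, $\Lambda_J^+(\mathfrak{g}^*)=\R\varphi\oplus\Lambda_h^-(\mathfrak{g}^*)$, is therefore $\Phi_\zeta$-orthogonal with $\Phi_\zeta(\varphi,\varphi)=(\varphi\wedge\varphi)(\zeta)=2>0$ on the line and $\Phi_\zeta$ negative definite on the $3$-plane, yielding signature $(1,3)$. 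The second identity, $\Lambda_h^+(\mathfrak{g}^*)=\R\varphi\oplus\Lambda_J^-(\mathfrak{g}^*)$, is also $\Phi_\zeta$-orthogonal, so positive definiteness of $\Phi_\zeta$ on $\Lambda_h^+(\mathfrak{g}^*)$ restricts to positive definiteness on the two-dimensional summand $\Lambda_J^-(\mathfrak{g}^*)$, proving (iii). The analogous statements for $\Lambda_J^{\pm}(\mathfrak{g})$ follow from (i) via $\Phi_\eta(G_\zeta(\alpha),G_\zeta(\beta))=\Phi_\zeta(\beta,\alpha)$ and the symmetry of $\Phi_\zeta$.

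For (iv), set $\varphi^*:=G_\zeta(\varphi)\in\Lambda_J^+(\mathfrak{g})$. Combining \eqref{gg} and \eqref{gg'} yields the identity $\Phi_\eta(v,G_\zeta(\alpha))=\alpha(v)$ for $v\in\Lambda^2(\mathfrak{g})$, $\alpha\in\Lambda^2(\mathfrak{g}^*)$, which at $\alpha=\varphi$, $v=u\wedge Ju$ specializes to
\[
\Phi_\eta(u\wedge Ju,\varphi^*)=\varphi(u,Ju)=h(u,u)\geq 0.
\]
Moreover $\Phi_\eta(\varphi^*,\varphi^*)=\Phi_\zeta(\varphi,\varphi)=2>0$ and $\Phi_\eta(u\wedge Ju,u\wedge Ju)=0$ by Lemma \ref{simple=0}. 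Thus every simple positive $2$-vector lies in the closed future null cone
\[
F:=\{v\in\Lambda_J^+(\mathfrak{g}) : \Phi_\eta(v,v)\geq 0,\ \Phi_\eta(v,\varphi^*)\geq 0\}
\]
of the Lorentzian space $(\Lambda_J^+(\mathfrak{g}),\Phi_\eta)$ of signature $(1,3)$. A standard reverse Cauchy--Schwarz argument---decomposing $v=\lambda\varphi^*+v_\perp$ with $v_\perp\perp\varphi^*$ and using that $\Phi_\eta$ is negative definite on $\varphi^{*\perp}$---shows $F$ is a convex cone on which $\Phi_\eta$ is non-negative. Hence every $v=\sum t_i u_i\wedge Ju_i\in PC_J$ lies in $F$ and satisfies $\Phi_\eta(v,v)\geq 0$. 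The main obstacle is precisely this Lorentzian convexity step; an alternative route is to expand $\Phi_\eta(u_1\wedge Ju_1,u_2\wedge Ju_2)$ in a $J$-adapted $h$-orthonormal basis and verify non-negativity by a direct Cauchy--Schwarz computation.
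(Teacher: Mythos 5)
Your argument is correct, and for parts (i)--(iii) it is essentially the paper's proof, just with the details written out: (i) rests on the $J$-invariance of $\Phi_\zeta$ (equivalently, that $\Lambda_J^+(\mathfrak{g}^*),\Lambda_J^-(\mathfrak{g}^*)$ are a $\Phi_\zeta$-complementary pair, which is how the paper phrases it via Lemma \ref{closed-exact}) together with \eqref{gg'}, and (ii)--(iii) come from the $\Phi_\zeta$-orthogonal decompositions \eqref{decomposition-invariant-1}, the definiteness \eqref{star-star}, and transporting the form through $G_\zeta$ using (i). Where you genuinely diverge is (iv): the paper simply expands a convex combination $v=\sum t_i u_i\wedge Ju_i$ bilinearly and observes that each pairwise term $\Phi_\eta(u_i\wedge Ju_i,u_j\wedge Ju_j)=\eta(u_i\wedge Ju_i\wedge u_j\wedge Ju_j)$ is nonnegative (it vanishes or is a positively oriented volume, since $J$ induces the fixed orientation), whereas you place all positive simple vectors in the closed future causal cone of $\varphi^*=G_\zeta(\varphi)$ in the Lorentzian space $(\Lambda_J^+(\mathfrak{g}),\Phi_\eta)$ of signature $(1,3)$ and invoke reverse Cauchy--Schwarz to get convexity and nonnegativity of $\Phi_\eta$ there. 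Both are sound; your route needs (ii) as input plus the identity $\Phi_\eta(v,G_\zeta(\alpha))=\alpha(v)$ (which you verify correctly), but it yields the stronger structural statement that $PC_J$ lies in a causal cone, which is exactly the ``light cone lemma'' picture the paper itself exploits later in the proof of Theorem \ref{AK-tamed-cones}; the paper's route is more elementary and self-contained, needing no signature information at all.
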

\begin{proof}

(i) The first formula follows from the above mentioned fact that $(\Lambda_J^+(\mathfrak{g}^*), \Lambda_J^-(\mathfrak{g}^*))$ is a
$\Phi_{\zeta}-$complementary pair,
 and the first part of Lemma \ref{closed-exact}. The second is a special case of the fourth part of Lemma \ref{pairing}.
\newline
(ii) and (iii) follow from \eqref{decomposition-invariant-1} and (i). \newline
(iv) follows from $\Phi_{\eta}(f_i\wedge Jf_i, f_j\wedge Jf_j)\geq 0$ for any $i, j$, which is straightforward.
\end{proof}
\subsection{Tamed versus almost K\"ahler}
We will apply Proposition \ref{no-invariant-positive-boundary} to prove Theorem \ref{tamed-and-compatible-intro}.
The following observations on the set $S$ of simple vectors are crucial for this purpose.

\begin{lemma} \label{invariant-vectors} For an almost complex structure $J$ on $\mathfrak{g}$ of arbitrary dimension, we have
\begin{equation}
S\cap\Lambda^+_J(\mathfrak{g})\subset PC_J\sqcup \,-PC_J.
\end{equation}

\end{lemma}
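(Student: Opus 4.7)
The plan is to show that any non-zero $\xi \in S \cap \Lambda^+_J(\mathfrak{g})$ equals $\pm(w \wedge Jw)$ for a suitable $w \in \mathfrak{g}$. The geometric content is that a non-zero simple $2$-vector is determined (up to scale and orientation) by the oriented $2$-plane it spans, so the invariance $\xi = J\xi$ is equivalent to that $2$-plane being $J$-stable. Since $J^2 = -\id$, the restriction of $J$ to a $J$-stable $2$-plane is a complex structure on it, and on such a plane every simple $2$-vector is a real multiple of $u \wedge Ju$ for any chosen $u \neq 0$.

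To carry this out, I would write $\xi = u \wedge v$ with $u, v$ linearly independent, and set $P = \Span\{u, v\}$. The equality $u \wedge v = Ju \wedge Jv$ forces $\Span\{Ju, Jv\} = P$, so $P$ is $J$-stable. Expanding $Ju = au + bv$ in the basis $\{u, v\}$, the identity $J^2 u = -u$ rules out $b = 0$ (otherwise $Ju \in \R u$ would force $a^2 = -1$). A direct computation then yields
$$u \wedge Ju = u \wedge (au + bv) = b\,(u \wedge v) = b\,\xi.$$
Setting $w = |b|^{-1/2} u$, one has $w \wedge Jw = \pm \xi$ according to the sign of $b$; hence $\xi \in PC_J$ when $b > 0$ and $\xi \in -PC_J$ when $b < 0$, while the zero bivector lies trivially in both.

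The only step deserving any care is the passage from $J$-invariance of $\xi$ to $J$-invariance of $P$, but this is immediate from the observation that two non-zero simple $2$-vectors agree in $\Lambda^2(\mathfrak{g})$ exactly when they span the same $2$-plane with the same orientation and scale. No further dimensional input is needed, which matches the statement that the inclusion holds for $\mathfrak{g}$ of arbitrary dimension.
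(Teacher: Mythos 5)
Your argument is correct and follows essentially the same route as the paper: the paper simply records as an observation that a simple $2$-vector lying in $\Lambda^+_J(\mathfrak{g})$ must be of the form $\pm(v\wedge Jv)$, and your proof supplies the elementary verification of exactly that fact (the $J$-stability of the plane of $\xi$ and the computation $u\wedge Ju=b\,\xi$ with $b\neq 0$). No gap; the level of detail you add is all that distinguishes it from the paper's one-line proof.
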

\begin{proof}
Observe that if
a simple $2$-vector is in $\Lambda^+_J(\mathfrak{g})$ then it is of the form $\pm(v\wedge Jv)$, and in fact, $\Lambda^+_J(\mathfrak{g})$ is generated by simple vectors of the form $v\wedge Jv$ .
\end{proof}

The second one is
only valid in dimension $4$.
\begin{lemma}\label{invariant-part}
Suppose $\mathfrak{g}$ has dimension $4$. If $v\ne 0\in S$ and $v\notin \Lambda_J^+(\mathfrak{g})$, then
$\pi_+v\notin PC_J$.
\end{lemma}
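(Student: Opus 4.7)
The plan is to combine the isotropy of simple $2$-vectors under $\Phi_\eta$ (Lemma \ref{simple=0}) with the signature data for $\Phi_\eta$ on the $J$-eigenspaces (Lemma \ref{invariant-pairing}). First I would decompose $v = \pi_+ v + \pi_- v$; since $v \notin \Lambda_J^+(\mathfrak{g})$, the summand $\pi_- v$ is nonzero.

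Next I would record that $\Lambda_J^+(\mathfrak{g})$ and $\Lambda_J^-(\mathfrak{g})$ are $\Phi_\eta$-orthogonal. This follows from Lemma \ref{involution} applied to $V=W=\Lambda^2(\mathfrak{g})$ with the common involution $J$, once one checks $\Phi_\eta(Ju, Jw) = \Phi_\eta(u, w)$; this in turn holds because $\eta \in \Lambda^4(\mathfrak{g}^*)$ is preserved by the induced action of $J$ on the top exterior power (the $J$-orientation agrees with the fixed one, so $\det J = 1$). Combined with the simplicity relation $\Phi_\eta(v, v) = 0$ from Lemma \ref{simple=0}, orthogonality gives
\[
0 \;=\; \Phi_\eta(v, v) \;=\; \Phi_\eta(\pi_+ v, \pi_+ v) + \Phi_\eta(\pi_- v, \pi_- v).
\]

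Since Lemma \ref{invariant-pairing}(iii) asserts that $\Phi_\eta$ is positive definite on $\Lambda_J^-(\mathfrak{g})$ and $\pi_- v \neq 0$, we obtain $\Phi_\eta(\pi_- v, \pi_- v) > 0$, and hence $\Phi_\eta(\pi_+ v, \pi_+ v) < 0$. Finally I would invoke Lemma \ref{invariant-pairing}(iv): every element of $PC_J$ satisfies $\Phi_\eta(\cdot,\cdot) \geq 0$ on the diagonal. Since $\Phi_\eta(\pi_+ v, \pi_+ v) < 0$, this forces $\pi_+ v \notin PC_J$, as required. The only subtle point is the $\Phi_\eta$-orthogonality of the two $J$-eigenspaces, but this is immediate from the invariance framework already established in Section \ref{AKSLA}; the rest is a signature count specific to dimension four.
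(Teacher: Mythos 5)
Your proof is correct and follows essentially the same route as the paper: decompose $v=\pi_+v+\pi_-v$, use $\Phi_\eta(v,v)=0$ for simple $v$ together with positive definiteness of $\Phi_\eta$ on $\Lambda_J^-(\mathfrak{g})$ to get $\Phi_\eta(\pi_+v,\pi_+v)<0$, then conclude via Lemma \ref{invariant-pairing} (iv). Your extra verification of the $\Phi_\eta$-orthogonality of $\Lambda_J^\pm(\mathfrak{g})$ is a point the paper uses implicitly (it is already established in Section \ref{AKSLA} via Lemma \ref{involution}), so there is no substantive difference.
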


\begin{proof}


Let $v\in S$ and $v=\pi_+v + \pi_-v$. By assumption, $\pi_-v\ne 0$, and hence $\Phi_{\eta}(\pi_-v, \pi_- v)>0$ by Lemma \ref{invariant-pairing} (iii).
Since $$0=\Phi_{\eta}(v,v)=\Phi_{\eta}(\pi_+v, \pi_+ v) +\Phi_{\eta}(\pi_-v, \pi_- v) ,$$
$\Phi_{\eta}(\pi_+v, \pi_+ v)<0$.
The conclusion follows from Lemma \ref{invariant-pairing} (iv).
\end{proof}
We are ready to show the following theorem, from which Theorem \ref{tamed-and-compatible-intro} follows.
\begin{theorem}\label{compatible}
Let $\mathfrak{g}$ be a $4-$dimensional Lie algebra with $B\wedge B=0$. Let $J$ be an
almost complex structure on $\mathfrak{g}$. The following are equivalent.
\begin{enumerate}
\item[0)]
$J$ is tamed. \vskip.1truecm\noindent
\item[1)]
$B\cap PC_J=\{0\}$. \vskip.1truecm\noindent
\item[2)] $B\cap\Lambda^+_J(\mathfrak{g})=\{ 0 \}$.\label{no-invariant-boundary-2}\vskip.1truecm\noindent
\item[3)] $\pi_+B\cap PC_J=\{ 0 \}$.\label{no-invariant-boundary-3}\vskip.1truecm\noindent
\item[4)] $J$ is almost K\"ahler. \label{tamed-and-compatible}
\end{enumerate}
\end{theorem}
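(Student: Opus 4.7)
The plan is to leverage Proposition \ref{no-invariant-positive-boundary}, which already delivers the two equivalences $0\Leftrightarrow 1$ and $3\Leftrightarrow 4$ in any dimension and for any $\mathfrak{g}$. So the content of the theorem reduces to establishing the cycle $1\Rightarrow 2\Rightarrow 3\Rightarrow 1$ among the three $PC_J$/$\Lambda^+_J$ boundary conditions, and this is where the hypotheses $\dim\mathfrak{g}=4$ and $B\wedge B=0$ enter.

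The first step is to extract the geometric meaning of $B\wedge B=0$. By Lemma \ref{simple=0}, a $2$-vector $u$ in a $4$-dimensional Lie algebra is simple if and only if $\Phi_\eta(u,u)=u\wedge u=0$. The assumption $B\wedge B=0$ says in particular $u\wedge u=0$ for every $u\in B$, so
\[
B\subseteq S.
\]
This is the single structural consequence I will use in all three implications.

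The easy implication is $3\Rightarrow 1$: if $u\in B\cap PC_J$, then $u\in PC_J\subseteq \Lambda^+_J(\mathfrak{g})$, so $\pi_+u=u\in \pi_+B\cap PC_J=\{0\}$. For $1\Rightarrow 2$, take $u\in B\cap\Lambda^+_J(\mathfrak{g})$; since $u\in B\subseteq S$, we have $u\in S\cap\Lambda^+_J(\mathfrak{g})$, so Lemma \ref{invariant-vectors} forces $u\in PC_J$ or $-u\in PC_J$. Either way, the corresponding vector lies in $B\cap PC_J$, which by hypothesis is $\{0\}$, hence $u=0$. For $2\Rightarrow 3$, suppose $v\in \pi_+B\cap PC_J$ is nonzero, and write $v=\pi_+u$ with $u\in B$. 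If $u\in\Lambda^+_J(\mathfrak{g})$ then $u\in B\cap\Lambda^+_J(\mathfrak{g})=\{0\}$ by hypothesis, contradicting $v\ne 0$. Otherwise $\pi_-u\ne 0$, i.e.\ $u\notin \Lambda^+_J(\mathfrak{g})$; since $u\in B\subseteq S$, Lemma \ref{invariant-part} yields $\pi_+u\notin PC_J$, contradicting $v=\pi_+u\in PC_J$.

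There is no real obstacle in this argument: once the reformulation $B\subseteq S$ is in hand, the remaining content is a case analysis played out against Lemmas \ref{invariant-vectors} and \ref{invariant-part}. The only subtlety to watch in writing it up is that Lemma \ref{invariant-part} requires both that the simple vector be \emph{nonzero} and that it not already lie in $\Lambda^+_J(\mathfrak{g})$, which is why the $2\Rightarrow 3$ argument must first dispose of the case $u\in\Lambda^+_J(\mathfrak{g})$ before applying that lemma.
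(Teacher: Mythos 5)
Your proposal is correct and follows essentially the same route as the paper: Proposition \ref{no-invariant-positive-boundary} for $0\Leftrightarrow 1$ and $3\Leftrightarrow 4$, the observation that $B\wedge B=0$ together with Lemma \ref{simple=0} puts $B$ inside the simple vectors, and then Lemmas \ref{invariant-vectors} and \ref{invariant-part} for the implications among 1), 2), 3). The only (immaterial) difference is how the cycle is closed: you use the trivial implication $3\Rightarrow 1$, while the paper closes it with the equally trivial $4\Rightarrow 0$.
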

\begin{proof}
 $0)=1)$ is the first bullet of Proposition \ref{no-invariant-positive-boundary}.
\vskip.2truecm
$1)=2)$
 Since $PC_J\subset \Lambda^+_J(\mathfrak{g})$ by definition, we need to show only $(\Longrightarrow)$, namely, if there is a nonzero vector in
 $B\cap\Lambda^+_J(\mathfrak{g})$, then there is a nonzero vector in $B\cap PC_J$.
 \newline
 Let $0\neq v\in B\cap\Lambda^+_J(\mathfrak{g})$. Then
 by our assumption $v\wedge v=0$. Now it follows from Lemma \ref{simple=0} that
 $v\in S$.
Therefore $v\in S\cap\Lambda^+_J(\mathfrak{g})$. Hence, by Lemma \ref{invariant-vectors},
$v$ or $-v$ $\in PC_J$. Thus $B\cap PC_J$ contains either $v$ or $-v$.
\vskip.2truecm

$2)\Longrightarrow 3)$ Suppose $0\neq v\in B$, then $v\notin \Lambda^+_J(\mathfrak{g})$ by assumption.
Since $v\in S$, by Lemma \ref{invariant-part}, $\pi_+v\notin PC_J$.
Therefore $\pi_+B\cap PC_J=\{ 0 \}$.
\vskip.2truecm
$3)=4)$ is the second bullet of Proposition \ref {no-invariant-positive-boundary}.
\vskip.2truecm
$4) \Longrightarrow 0)$ An almost K\"ahler $J$ is tamed by definition.
\end{proof}
It is easy to see that $B\wedge B=0$ for any unimodular $4-$dimensional Lie algebra (see part 3 of Lemma \ref{exact-boundary}).

In \cite{EF} it is proved that a $4$-dimensional Lie algebra $\mathfrak{g}$ endowed
with a complex structure $J$ admits a taming symplectic structure if and only if $(\mathfrak{g},J)$ has a K\"ahler metric.
It is interesting to study whether Theorem \ref{compatible} offers an alternative proof of this result. \vskip.2truecm\noindent
\subsection{Positive $2-$planes and almost complex structures}
We have shown that on a $4-$dimensional unimodular Lie algebra, all tamed $J$ are almost K\"ahler.
In the next section we will determine which $J$ is tamed.
For this purpose we first review in this subsection Donaldson's description of $4-$dimensional almost complex structures
in terms of positive $2-$planes in $\Lambda^2(\g^*)$ (\cite{D}). Then we calculate explicitly the positive $2-$planes $J\to \Lambda_J^-(\g^*)$
given a basis of $\g$.

Notice that both $\hbox{SL}(4,\R)$ and $\hbox{SO}(3,3)$ have dimension $15$. In fact, there is a natural
representation $\pi :\hbox{SL}(4,\R)\to\hbox{Aut}(\Lambda^2(\mathfrak{g}), \Phi_{\eta} )$ of $\hbox{SL}(4,\R)$ on $\Lambda^2(\mathfrak{g})$:
given any $A\in\hbox{SL}\,(4,\R)$, set
$\pi(A)(\sum p^{ik}f_i\wedge f_k)=\sum p^{ik}A f_i\wedge A f_k.$

It is immediate to check that
$\pi :\hbox{SL}\,(4,\R)\to\hbox{Aut}\,(\Lambda^2(\mathfrak{g}))$ is a group homomorphism, and $\pi(A)$ preserves $\Phi_{\eta}$ since $\det(A)=1$.
Since $\hbox{SL}\,(4,\R)$ is connected, $\pi(\hbox{\rm SL}\,(4,\R))\subset \hbox{Aut}^0(\Lambda^2(\mathfrak{g}), \Phi_{\eta})\simeq \hbox{SO}^0(3,3)\,,$
where the superscript $0$ means the connected component of the identity.
Furthermore, we have that $\Ker(\pi)=\pm I$.
Indeed, let $A\in\Ker(\pi)$; then $\pi(A)(f_i\wedge f_j)=(Af_i\wedge Af_j)=f_i\wedge f_j\,,i,j=1,\ldots ,4$. Hence,
by part 4 of Lemma \ref{pairing}, for any fixed
$i$, the vector $Af_i\in\Span\{f_i,f_j\}\,,j=1,\ldots ,4$. Consequently, $A=\lambda I$. Since $A\in \hbox{SL}(4,\R)$, it follows that $\lambda=\pm 1$, i.e.,
$A=\pm I$.\bigskip

Dualizing $\pi: \hbox{\rm SL}\,(4,\R)\to \hbox{\rm SO}^0(\Lambda^2(\mathfrak{g}), \Phi_{\eta})$ we obtain the double
covering $\pi^*: \hbox{\rm SL}\,(4,\R)\to \hbox{\rm SO}^0(\Lambda^2(\mathfrak{g}^*), \Phi_{\zeta})$.
One important consequence is that $\hbox{\rm SL}(4,\R)$ acts transitively on the set of $\Phi_{\zeta}-$positive definite
$k-$planes for $k=1,2,3$. As observed by Donaldson in \cite{D}, such $k-$planes correspond to various
geometric structures. In particular,

\begin{prop}\label{2-plane} (\cite{D})
For a $4-$dimensional Lie algebra $\g$, the map $J\to \Lambda_J^-(\g^*)$ is a one-to-one correspondence between the space of
 almost complex structures and the space of oriented positive definite two plane in $\Lambda^2(\mathfrak{g}^*)$.
\end{prop}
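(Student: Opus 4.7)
The plan is to construct the inverse to $J \mapsto \Lambda_J^-(\mathfrak{g}^*)$ directly, exploiting the dimension-four identity $\alpha \wedge \beta = \Phi_\zeta(\alpha,\beta)\,\eta$, valid because $\Lambda^4(\mathfrak{g}^*)$ is one-dimensional, which makes $\Phi_\zeta$-orthogonality of two $2$-forms equivalent to the vanishing of their exterior product. First I would verify well-definedness of the target and fix the canonical orientation: by Lemma \ref{invariant-pairing} (ii)--(iii), $\Lambda_J^-(\mathfrak{g}^*)$ has dimension $2$ and is $\Phi_\zeta$-positive definite; to orient it, choose any generator $\Omega$ of the complex line $\Lambda^{2,0}_J(\mathfrak{g}_\C^*)$, write $\Omega = \alpha + i\beta$ with $\alpha,\beta \in \Lambda_J^-(\mathfrak{g}^*)$, and declare $(\alpha,\beta)$ positively oriented. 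A rescaling $\Omega \mapsto \lambda\Omega$ with $\lambda \in \C^*$ changes this basis by a real matrix of determinant $|\lambda|^2 > 0$, so the orientation is independent of the choice of generator.

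For surjectivity I would take an oriented positive definite $2$-plane $P \subset \Lambda^2(\mathfrak{g}^*)$, pick a positively oriented $\Phi_\zeta$-orthonormal basis $(\alpha,\beta)$ of $P$, and set $\Omega = \alpha + i\beta \in \Lambda^2(\mathfrak{g}_\C^*)$. The orthonormality relations $\alpha\wedge\alpha = \beta\wedge\beta = \eta$ and $\alpha\wedge\beta = 0$ yield $\Omega\wedge\Omega = 0$ and $\Omega\wedge\bar\Omega = 2\eta \neq 0$. The first identity forces $\Omega$ to be decomposable, $\Omega = \mu\wedge\nu$ for some $\mu,\nu \in \mathfrak{g}_\C^*$; the second ensures that $\mu,\nu,\bar\mu,\bar\nu$ are $\C$-linearly independent. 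Setting $\mathfrak{g}_{1,0} := \Span_\C(\mu,\nu)$ and $\mathfrak{g}_{0,1} := \overline{\mathfrak{g}_{1,0}}$ then determines an almost complex structure $J$ on $\mathfrak{g}$, and the identification $P \otimes \C = \C\Omega \oplus \C\bar\Omega$ with $\C\Omega = \Lambda^{2,0}_J(\mathfrak{g}_\C^*)$ gives $\Lambda_J^-(\mathfrak{g}^*) = P$ with matching orientation.

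For injectivity it suffices to observe that this inverse construction depends only on $(P,\mathrm{orientation})$: different positively oriented $\Phi_\zeta$-orthonormal bases are related by rotations $(\alpha,\beta) \mapsto (\cos\theta\,\alpha + \sin\theta\,\beta,\, -\sin\theta\,\alpha + \cos\theta\,\beta)$ and hence yield $\Omega$'s differing by the $S^1$-factor $e^{-i\theta}$, so the complex line $\C\Omega \subset \Lambda^2(\mathfrak{g}_\C^*)$ is intrinsic, and $\mathfrak{g}_{1,0}$, the unique complex $2$-plane in $\mathfrak{g}_\C^*$ containing $\Omega$ as a top form, depends only on this line. Equivalently, the unoriented plane $\Lambda_J^-(\mathfrak{g}^*)$ determines $J$ only up to $J \leftrightarrow -J$ (which swaps $\Omega \leftrightarrow \bar\Omega$ and flips the orientation), and the orientation breaks this ambiguity. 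The main obstacle is the pair of computations $\Omega\wedge\Omega = 0$ and $\Omega\wedge\bar\Omega \neq 0$: together they guarantee that an abstract oriented positive $2$-plane produces a genuine almost complex structure rather than merely a formal $(1,0)$-$(0,1)$ splitting, and one also needs to track orientations to exclude the $J \leftrightarrow -J$ ambiguity.
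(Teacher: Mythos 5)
Your argument is correct, but note that the paper does not actually prove this proposition: it is quoted from Donaldson \cite{D}, and the surrounding discussion only sets up the group-theoretic context (the double cover $\mathrm{SL}(4,\R)\to \mathrm{SO}^0(3,3)$ acting transitively on $\Phi_\zeta$-positive definite planes), from which the correspondence is presented as an observation. So your proof is a genuinely different, self-contained route: instead of the homogeneous-space picture you construct the inverse map by hand, using the dimension-four identity $\alpha\wedge\beta=\Phi_\zeta(\alpha,\beta)\,\eta$, the Pl\"ucker criterion ($\Omega\wedge\Omega=0$ forces decomposability of the nonzero complex $2$-form $\Omega=\alpha+i\beta$), and the standard type decomposition $\Lambda^-_J(\g^*)\otimes\C=\Lambda^{2,0}_J\oplus\Lambda^{0,2}_J$, with the orientation convention ($\r\Omega,\im\Omega$ positively oriented for a $(2,0)$-generator $\Omega$) resolving the $J\leftrightarrow -J$ ambiguity; all the computations you invoke ($\Omega\wedge\Omega=0$, $\Omega\wedge\bar\Omega=2\eta$, invariance of the orientation under rescaling of $\Omega$, recovery of $\g_{1,0}$ from the line $\C\Omega$) check out. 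What your approach buys is an explicit, elementary construction that also makes the orientation convention transparent; what the paper's (i.e.\ Donaldson's) viewpoint buys is the uniform treatment of $k$-planes for $k=1,2,3$ and the transitivity statement used elsewhere in that subsection. One small point worth making explicit in your surjectivity step: the paper's convention is that almost complex structures induce the fixed orientation of $\g$, so you should observe that the $J$ you build from $P$ is orientation-compatible -- this is immediate from your computation $\Omega\wedge\bar\Omega=2\eta$, since $\mu\wedge\nu\wedge\bar\mu\wedge\bar\nu$ is a positive multiple of the volume form of the orientation induced by the constructed $J$, and $\eta$ induces the given one.
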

This point of view is useful to detect integrable $J$.
\begin{lemma} \label{integrability-dim4} Suppose $\dim \mathfrak{g}=4$. Then $J$ is integrable if $\Lambda_J^-(\mathfrak{g}^*)\subset \mathcal Z$.
\end{lemma}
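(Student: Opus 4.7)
The plan is to apply Proposition \ref{integrability} by producing a non-zero closed $(2,0)$-form on $\mathfrak{g}$. The key input is the standard type identification, valid in any dimension but especially useful in dimension $4$:
\[ \Lambda^-_J(\mathfrak{g}^*) \otimes_{\R} \C = \Lambda^{2,0}_J(\mathfrak{g}_{\C}^*) \oplus \Lambda^{0,2}_J(\mathfrak{g}_{\C}^*). \]
I would verify this by extending the involution $\alpha \mapsto \alpha(J\cdot,J\cdot)$ $\C$-linearly to $\Lambda^2(\mathfrak{g}_\C^*)$ and observing that on a $(p,q)$-form it acts as $i^p(-i)^q = (-1)^q\, i^{p+q}$. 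For $p+q=2$ this evaluates to $+1$ on $\Lambda^{1,1}_J$ and to $-1$ on $\Lambda^{2,0}_J \oplus \Lambda^{0,2}_J$; the dimension count $\dim_{\R}\Lambda^-_J(\mathfrak{g}^*)=2=\dim_{\C}\bigl(\Lambda^{2,0}_J \oplus \Lambda^{0,2}_J\bigr)$ confirms the identification.

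Granted this, the hypothesis $\Lambda^-_J(\mathfrak{g}^*) \subset \mathcal{Z}^2$ combined with the $\C$-linearity of $d$ forces $d$ to vanish on all of $\Lambda^{2,0}_J \oplus \Lambda^{0,2}_J$. In particular, picking any non-zero $\Omega \in \Lambda^{2,0}_J$ (which exists because $\dim_{\C}\Lambda^{2,0}_J = \binom{2}{2} = 1$), we obtain a non-zero closed $(n,0)$-form with $n=2$. Proposition \ref{integrability} then gives $N_J=0$, i.e.\ $J$ is integrable.

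I do not foresee any substantive obstacle. The only point that deserves a careful check is the $\C$-linear bookkeeping identifying the form-level involution $\alpha\mapsto\alpha(J\cdot,J\cdot)$ with the bidegree grading, which is a routine multilinear computation in a $(1,0)$-coframe; everything else is immediate from the hypothesis and Proposition \ref{integrability}.
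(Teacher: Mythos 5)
Your proof is correct and follows essentially the same route as the paper: both arguments use that (the complexification of) $\Lambda_J^-(\mathfrak{g}^*)$ is exactly the real form of $\Lambda^{2,0}_J\oplus\Lambda^{0,2}_J$, so the hypothesis yields a non-zero closed $(2,0)$-form, and then Proposition \ref{integrability} applies. The paper merely phrases the same fact through the real operator $s_J\alpha(\cdot,\cdot)=\alpha(J\cdot,\cdot)$ on $\Lambda_J^-$, forming the closed pure-type form $\alpha+\mathbf{i}\,s_J\alpha$, which is your complexified identification in disguise.
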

\begin{proof}
Suppose $\alpha\in \Lambda_J^-$. Let us consider the additional action of $s_J$ of $J$ on $\Lambda^2(\mathfrak{g}^*)$, defined as
$$
s_J\alpha(u, v)=\alpha(Ju, v).
$$
This action preserves $\Lambda_J^-$. Then, by the assumption, both $\alpha$ and $s_J\alpha$ are closed. Therefore $\alpha+{\bf i} (s_J\alpha)$ is a
closed $(2,0)-$form and the statement follows from Proposition \ref{integrability}.
\end{proof}
Fix a basis $\{f_i\}$ of $\mathfrak{g}$. We present an explicit covering of the 8-dimensional space of almost complex structures,
and an explicit calculation of $\Lambda_J^-(\mathfrak{g}^*)$
with respect to this covering.

Suppose
$$
Jf_k=\sum_{h=1}^4a_{hk}f_h\,,\quad a_{hk}\in\R.
$$ Assume that $\{f_1,Jf_1,f_3, Jf_3\}$ is still
a basis of $\mathfrak{g}$. Then
\begin{equation}\label{anti-invariant-covectors}
\Lambda_J^-(\mathfrak{g}^*)=\Span\left\{f^{24}-\sum_{h<k}p_{hk}f^{hk}\,,\,\sum_{h}\left(a_{2h}f^{4h}- a_{4h}f^{2h}\right)\right\}.
\end{equation}
where
$$
p_{hk}=\det
\begin{pmatrix}
a_{2h} & a_{2k}\\
a_{4h} & a_{4k}
\end{pmatrix}
\qquad h,k=1,\ldots ,4\,,h<k,
$$
are the Pl\"ucker coordinates of the vectors $(a_{2i})$ and $(a_{4j})$.
The previous computation is general. Indeed, the condition that
$\{f_1,Jf_1, f_3, Jf_3\}$ is a basis is equivalent to say that the $2$-plane $L_{13}=\Span\{f_1,f_3\}$ is not $J$-invariant. Set, for any pair of
indices $(i,j)$, with $i<j$, $L_{ij}=\Span\{f_i,f_j\}$ and
$$
\mathcal{U}_{ij}=\{J\,\,\,\vert\,\,\, L_{ij}\,\,\hbox{\rm is not }\,J-\hbox{\rm invariant}\}.
$$
Since $L_{ij}$ cannot be $J-$invariant for all pairs, $\{\mathcal{U}_{ij}\}$ is a covering of the space of the almost complex structures on $\mathfrak{g}$.
It can be proved that
$$
\mathcal{U}_{ij}\simeq \{(a_i,a_j)\in\R^4\times\R^4\,\vert\,p_{\overline{i}\,\overline{j}}=\det
\begin{pmatrix}
a_{\overline{i}i} & a_{\overline{i}j}\\
a_{\overline{j}i} & a_{\overline{j}j}
\end{pmatrix}\neq 0\,\,\hbox{\rm where},\overline{i},\,\overline{j}\in\{1,\ldots ,4\}\setminus\{i,j\},\,
\overline{i}<\overline{j}\}.
$$
\section{Unimodular 4-dimensional Lie algebras}\label{unimodular4-dimensional}
In this section $\g$ is a unimodular Lie algebra.
\subsection{Unimodularity}
We start by recalling the following
\begin{definition}
A $2n$-dimensional Lie algebra $\mathfrak{g}$ is said to be {\em unimodular} if
$\hbox{\rm tr}(\hbox{\rm ad}_\xi)=0$ for every
$\xi\in\mathfrak{g}$.
\end{definition}

It turns out that $\mathfrak{g}$ is unimodular if and only if
\begin{equation}\label{unimodular}
d(\Lambda^{2n-1}\left(\mathfrak{g}^*)\right)=0
\end{equation}
(see \cite[p. 81 (6.3)]{K}). Equivalently, $\mathfrak{g}$ is unimodular if and only if $b_{2n}(\mathfrak{g})=1$.
\begin{lemma} \label{exact-boundary} Let $\mathfrak{g}$ be a $2n$-dimensional unimodular Lie algebra. Then\vskip.1truecm
1. $G^k_\eta(Z_k)=\mathcal{Z}^{2n-k}$.\vskip.1truecm

2. $G^k_\eta(B_k)=\mathcal{B}^{2n-k}$.\vskip.1truecm

3. $(\mathcal{Z}^k, \mathcal{B}^{2n-k})$ and $({Z}^k, {B}^{2n-k})$ are $\Phi^k_\zeta$-complementary pairs.

4. There are induced non-degenerate pairings
$$
\bar \Phi^k_\zeta:H^k(\mathfrak{g})\times H^{2n-k}(\mathfrak{g})\to \R, \quad \bar \Phi^k_\eta:H_k(\mathfrak{g})\times H_{2n-k}(\mathfrak{g})\to \R.
$$
\vskip.1truecm

5. Furthermore, let $h$ be an inner product on $\mathfrak{g}$ with the volume form $\hbox{\rm Vol}_h$. Then, for every
$\alpha\in\Lambda^{p-1}(\mathfrak{g}^*)$,
$\beta\in\Lambda^p(\mathfrak{g}^*)$ we have
$$
\langle d\alpha,\beta\rangle = \langle \alpha,\delta \beta\rangle,
$$
and the following Hodge decomposition holds
\begin{equation}
\label{Hodge-Invariant-p-dimensional}
\Lambda^p({\mathfrak g}^*)=\mathcal{H}^p({\mathfrak g})\oplus d(\Lambda^{p-1}({\mathfrak g}^*))\oplus
\delta(\Lambda^{p+1}({\mathfrak g}^*))\,.
\end{equation}
\end{lemma}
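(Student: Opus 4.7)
The plan is to derive all five parts from a single consequence of unimodularity, namely $d(\Lambda^{2n-1}(\mathfrak{g}^*)) = 0$. Combined with the Leibniz rule, this yields a Stokes-type identity: for $\alpha \in \Lambda^{k}(\mathfrak{g}^*)$ and $\beta \in \Lambda^{2n-k-1}(\mathfrak{g}^*)$,
$$T_{\zeta}(d\alpha \wedge \beta) \;=\; (-1)^{k+1}\, T_{\zeta}(\alpha \wedge d\beta),$$
because $\alpha \wedge \beta \in \Lambda^{2n-1}(\mathfrak{g}^*)$ is annihilated by $d$. This identity will drive the entire proof.

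For parts 1 and 2, I would first invert the formula in Lemma \ref{pairing}(3) to obtain $T_u(\beta) = T_\zeta(\beta \wedge G_\eta^k(u))$ for $u \in \Lambda^k(\mathfrak{g})$ and $\beta \in \Lambda^k(\mathfrak{g}^*)$. For part 1, if $u \in Z_k$ then $T_\zeta(d\gamma \wedge G_\eta^k(u)) = 0$ for every $\gamma \in \Lambda^{k-1}(\mathfrak{g}^*)$; the Stokes identity transports this to $T_\zeta(\gamma \wedge d G_\eta^k(u)) = 0$, and the non-degeneracy of $\Phi^{k-1}_\zeta$ forces $dG_\eta^k(u) = 0$. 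Reversing the computation gives the converse. For part 2, I would use the dual description $B_k = (\mathcal{Z}^k)^\perp$ from Lemma \ref{closed-exact'}; the same identity shows that $G_\eta^k(u) \in \mathcal{B}^{2n-k}$ is equivalent to $T_u$ annihilating $\mathcal{Z}^k$, with preimages of exact forms supplied directly by the isomorphism $G_\eta^k$.

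For part 3, the Stokes identity applied on the form side shows $\mathcal{Z}^k = (\mathcal{B}^{2n-k})^{\perp}$ under $\Phi^k_\zeta$, and the third bullet of Lemma \ref{complement} then upgrades this to a complementary pair $(\mathcal{Z}^k, \mathcal{B}^{2n-k})$. The vector analogue follows by transporting through $G_\eta$ and $G_\zeta$ using Lemma \ref{pairing}(4), which identifies $\Phi^k_\eta$ with $\Phi^k_\zeta$. Part 4 is then immediate: the nested complementary pairs $(\mathcal{B}^k, \mathcal{Z}^{2n-k}) \subset (\mathcal{Z}^k, \mathcal{B}^{2n-k})$ feed into the fourth bullet of Lemma \ref{complement} to produce a non-degenerate pairing on $H^k(\mathfrak{g}) \times H^{2n-k}(\mathfrak{g})$, and the same argument on the vector side yields $\bar\Phi^k_\eta$.

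Part 5 is Hodge theory. Adjointness $\langle d\alpha, \beta\rangle = \langle \alpha, \delta\beta\rangle$ is obtained by applying Stokes to $\alpha \wedge *_h \beta \in \Lambda^{2n-1}(\mathfrak{g}^*)$, once one matches the sign convention $\delta = (-1)^{n(p+1)+1} *_h d *_h$. The decomposition then reduces to the standard finite-dimensional Hodge argument: $\langle \Delta\alpha, \alpha\rangle = |d\alpha|^2 + |\delta\alpha|^2$ yields $\mathcal{H}^p = \ker(d) \cap \ker(\delta)$; pairwise orthogonality of $d\Lambda^{p-1}$, $\delta\Lambda^{p+1}$, and $\mathcal{H}^p$ follows from adjointness and $d^2 = \delta^2 = 0$; completeness is by dimension count. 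The main obstacle throughout is keeping the sign bookkeeping from the Leibniz rule consistent; conceptually everything is driven by the single unimodularity identity $d(\Lambda^{2n-1}(\mathfrak{g}^*)) = 0$.
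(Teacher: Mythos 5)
Your proposal is correct and follows essentially the same route as the paper: unimodularity in the form $d(\Lambda^{2n-1}(\mathfrak{g}^*))=0$, the pairing identity of Lemma \ref{pairing}, duality via Lemma \ref{complement}, transport through $G_\eta$ and $G_\zeta$, and standard finite-dimensional Hodge theory. The only cosmetic differences are that the paper proves part 2 by exhibiting the explicit primitive $-G^{k+1}_\eta(w)$ of $G^k_\eta(u)$ rather than invoking $\mathcal{B}^{2n-k}=(\mathcal{Z}^k)^\bot$ (your route is fine provided you establish the form-side complementarity of part 3 first, which your argument for part 3 does independently, so there is no circularity), and it deduces the decomposition \eqref{Hodge-Invariant-p-dimensional} from self-adjointness of $\Delta$ via $\Lambda^p(\mathfrak{g}^*)=\Ker\Delta\oplus\Image\Delta$ instead of your orthogonality-plus-dimension-count argument.
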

\begin{proof}

$1)$ Let $u\in Z_k$ be a $k$-cycle and let $\alpha=G^k_\eta(u)\in\Lambda^{2n-k}(\mathfrak{g}^*)$. Then, for every
$\gamma\in\Lambda^{k-1}(\mathfrak{g}^*)$, $d(\gamma\wedge \alpha)=0$
since $\mathfrak{g}$ is unimodular. By applying the third part of lemma \ref{pairing}, we have
$$
T_{\zeta}(\gamma\wedge d\alpha)=(-1)^kT_{\zeta}\left(d\gamma\wedge\alpha\right)=(-1)^k T_{G^{2n-k}_\zeta(\alpha)}(d\gamma)=
(-1)^kT_u(d\gamma)=-dT_u(\gamma)=0,
$$
and, consequently, $d\alpha=0$.\vskip.3truecm\noindent
$2)$ Let $u\in B^k$ be a $k$-boundary, i.e., $T_u=dT_w$, for a $(k+1)-$vector $w$. \newline
Set
$\beta=G^{k+1}_\eta(w)\in\Lambda^{2n-k-1}(\mathfrak{g}^*)$. Then, by applying the third part of lemma
\ref{pairing} twice, for every $\gamma\in\Lambda^{k}(\mathfrak{g}^*)$,
we have
$$
\begin{array}{lll}
T_{\zeta}(\gamma\wedge\alpha)&=&T_{G^{2n-k}_\zeta(\alpha)}(\gamma)=T_u(\gamma)=dT_w(\gamma)=(-1)^{k+1}T_w(d\gamma)=
(-1)^{k+1}T_{G^{2n-k-1}_\zeta(\beta)}(d\gamma)\\[5pt]
&=& (-1)^{k+2}T_{\zeta}(d\gamma\wedge\beta)=-T_{\zeta}(\gamma\wedge d\beta);
\end{array}
$$
therefore, $\alpha=d(-\beta)$.\vskip.3truecm\noindent
$3)$ By part 4 of Lemma \ref{pairing} and 1) and 2) it suffices to prove $\mathcal{Z}^k$ and $\mathcal{B}^{2n-k}$ are $\Phi^k_\zeta$-complementary pair.
Further, by Lemma \ref{complement} it is enough to prove that
$$
\mathcal{Z}^k=\left(\mathcal{B}^{2n-k}\right)^\bot\,,
$$
where $\bot$ is taken with respect to $\Phi^k_\zeta$.

Let $\alpha\in\mathcal{Z}^k$ and $\beta\in\mathcal{B}^{2n-k}$, $\beta =d\gamma$; then, we have
$$
\Phi_\zeta^k(\alpha,d\gamma)=(\alpha\wedge d\gamma)(\zeta)=(-1)^{k+1}(d\alpha\wedge \gamma)(\zeta)=0,
$$
i.e., $\Phi_\zeta^k(\mathcal{Z}^k,\mathcal{B}^{2n-k})=0$. Therefore, $\mathcal{Z}^k\subset\left(\mathcal{B}^{2n-k}\right)^\bot$.\newline
Conversely, let $\alpha\in\left(\mathcal{B}^{2n-k}\right)^\bot$.
Then, for any $\gamma\in\Lambda^{2n-k-1}(\mathfrak{g}^*)$, we have
$$
0=\Phi^k_\zeta(\alpha,d\gamma)=(\alpha\wedge d\gamma)(\zeta)=(-1)^{k+1}(d\alpha\wedge\gamma)(\zeta)\,;
$$
hence, $(d\alpha\wedge\gamma)(\zeta)=0$, for every $\gamma\in\Lambda^{2n-k-1}(\mathfrak{g}^*)$. \newline
Therefore, $d\alpha=0$ and $\left(\mathcal{B}^{2n-k}\right)^\bot\subset\mathcal{Z}^k$.\vskip.2truecm
\noindent
$4)$ It follows from 3) and the last bullet of Lemma \ref{complement}.
\vskip.3truecm\noindent
$5)$ Consider the linear map $T_{\zeta}:\Lambda^{2n}(\mathfrak{g^*})\to\R$ with $\zeta$ satisfying
$\hbox{\rm Vol}_h(\zeta)=1$. Since $\alpha\wedge *_h\beta$ is a $(2n-1)$-form and $\mathfrak{g}$ is unimodular, $ d(\alpha\wedge *_h\beta)=0$. Thus we have
$$0=T_{\zeta}\left( d(\alpha\wedge *_h\beta)\right)=\langle d\alpha,\beta\rangle- \langle \alpha,\delta \beta\rangle.$$
This implies that $\Delta$ is self-adjoint, and $\Delta\,\alpha =0$ if and only
if $d\alpha =0=\delta\alpha.$
Since the complex $(\Lambda^*({\mathfrak g}),d)$ has finite dimension and
$\Delta$ is self-adjoint, it follows at once that
$$
\Lambda^p({\mathfrak g}^*) =\hbox{\rm Ker}\,\Delta\oplus \Image\Delta,
$$
which implies \eqref{Hodge-Invariant-p-dimensional}.
\end{proof}
\subsection{Cohomological $J$-decompositions}
Let $\mathfrak{g}$ be a $4$-dimensional unimodular Lie algebra endowed with an inner product $h$. Let $\mathcal{H}({\mathfrak g})$
be the space of $h$-harmonic $2$-forms
on ${\mathfrak g}$. Then, by \eqref{Hodge-Invariant-p-dimensional}, we have the Hodge decomposition
\begin{equation}
\label{Hodge-Invariant}
\Lambda^2({\mathfrak g}^*)=\mathcal{H}({\mathfrak g})\oplus d(\Lambda^{1}({\mathfrak g}^*))\oplus
d^*(\Lambda^{3}({\mathfrak g}^*))\,.
\end{equation}
Let $\mathcal{H}^+({\mathfrak g})$, $\mathcal{H}^-({\mathfrak g})$ be the space of
$h$-harmonic self-dual, anti-self-dual forms respectively on ${\mathfrak g}$.
Since $*_h$ commutes with $\Delta$ we obtain that
$$
\mathcal{H}({\mathfrak g})=\mathcal{H}^+({\mathfrak g})\oplus\mathcal{H}^-({\mathfrak g})
$$
Set
$$
H^\pm_h({\mathfrak g})=\{a\in H^2({\mathfrak g})\,\,\,\vert\,\,\, \hbox{\rm there exists}\,
\alpha\in \mathcal{Z}^\pm_h\,\,
\hbox{\rm such that}\, a=[\alpha]\}\,,
$$
where $\mathcal{Z}^\pm_h=\mathcal{Z}^2\cap\Lambda^\pm_h(\mathfrak{g})$.
Then, by \eqref{Hodge-Invariant}, the inclusion map $\mathcal{H}({\mathfrak g})\hookrightarrow \Lambda^2(\mathfrak{g}^*)$ gives rise to an
isomorphism between $\mathcal{H}({\mathfrak g})$ and $H^2(\mathfrak{g})$ such that
$\mathcal{H}^+({\mathfrak g})\cong H^+_h({\mathfrak g})$,
$\mathcal{H}^-({\mathfrak g})\cong H^-_h({\mathfrak g})$ and
$$
H^2(\mathfrak{g})=H^+_h({\mathfrak g})\oplus H^-_h({\mathfrak g})\,.
$$

Let us consider the
cohomology groups $H^+_J(\mathfrak{g})$, $H^-_J(\mathfrak{g})$
associated with an almost complex structure $J$.
\begin{theorem}\label{Nomizu-1-Lie-algebra}
Let ${\mathfrak g}$ be a unimodular $4$-dimensional Lie algebra ${\mathfrak g}$ endowed with an
almost complex structure $J$. Then the following decomposition
holds:
\begin{equation}\label{formula-Nomizu-1}
H^2(\mathfrak{g})=H^+_{J}({\mathfrak g})\oplus H^-_{J}({\mathfrak g}).
\end{equation}
\end{theorem}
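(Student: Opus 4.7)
The plan is in two stages: first $H^+_J(\mathfrak{g})\cap H^-_J(\mathfrak{g}) = 0$ (injectivity), then $H^+_J(\mathfrak{g}) + H^-_J(\mathfrak{g}) = H^2(\mathfrak{g})$ (surjectivity). The surjectivity will be proved by identifying $H^+_J(\mathfrak{g})$ with the $\bar\Phi^2_\zeta$-annihilator of $H^-_J(\mathfrak{g})$, where $\bar\Phi^2_\zeta$ is the non-degenerate pairing on $H^2(\mathfrak{g})$ furnished by Lemma~\ref{exact-boundary}(4).

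For injectivity I would take $\alpha\in\mathcal{Z}^+_J$ and $\beta\in\mathcal{Z}^-_J$ with $\alpha - \beta = d\gamma$ and show $\beta = 0$. Two ingredients drive the calculation. First, in dimension four the orientation-preserving hypothesis gives $\det J = 1$, which forces $\Lambda^+_J(\mathfrak{g}^*)\wedge\Lambda^-_J(\mathfrak{g}^*) = 0$; in particular $\beta\wedge\alpha = 0$. Second, unimodularity forces $d\Lambda^3(\mathfrak{g}^*) = 0$, so by $d\beta = 0$ and Leibniz, $\beta\wedge d\gamma = d(\beta\wedge\gamma) = 0$. Combining, $\beta\wedge\beta = \beta\wedge(\alpha - d\gamma) = 0$, i.e.\ $\Phi_\zeta(\beta,\beta) = 0$. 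By Lemma~\ref{invariant-pairing}(iii), $\Phi_\eta$ is positive-definite on $\Lambda^-_J(\mathfrak{g})$; transporting via the isomorphism $G_\zeta$ as in Lemma~\ref{invariant-pairing}(i), $\Phi_\zeta$ is positive-definite on $\Lambda^-_J(\mathfrak{g}^*)$, whence $\beta = 0$ and $[\alpha] = 0$.

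For surjectivity I would prove $H^+_J(\mathfrak{g}) = (H^-_J(\mathfrak{g}))^\perp$ under $\bar\Phi^2_\zeta$. The containment $H^+_J\subseteq(H^-_J)^\perp$ is again $\Lambda^+_J\wedge\Lambda^-_J = 0$ at the form level. For the reverse, given $\mathfrak{a}\in(H^-_J)^\perp$ with closed representative $\alpha = \alpha_+ + \alpha_-$, the orthogonality reduces (again using $\alpha_+\wedge\beta = 0$) to $\Phi_\zeta(\alpha_-,\beta) = 0$ for every $\beta\in\mathcal{Z}^-_J$. The technical heart is to identify the $\Phi_\zeta$-annihilator of $\mathcal{Z}^-_J$ inside $\Lambda^-_J(\mathfrak{g}^*)$ with $\pi_-\mathcal{B}^2$. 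I would assemble this from: the $\Psi_-$-complementary pair $(\mathcal{Z}^-_J,\pi_-B^2)$ of Lemma~\ref{closed-exact}(iv); the identity $G^2_\eta(B^2) = \mathcal{B}^2$ of Lemma~\ref{exact-boundary}(2) (where unimodularity re-enters); the preservation of the $J$-decomposition by $G_\eta$ from Lemma~\ref{invariant-pairing}(i), which gives $G_\eta(\pi_-B^2) = \pi_-\mathcal{B}^2$; and the compatibility $\Phi_\zeta(\cdot,\cdot) = \Psi_-(G_\zeta\cdot,\cdot)$ on $\Lambda^-_J(\mathfrak{g}^*)$. Once $\alpha_- = (d\gamma)_-$ for some $\gamma\in\Lambda^1(\mathfrak{g}^*)$, the closed form $\alpha - d\gamma$ lies in $\Lambda^+_J(\mathfrak{g}^*)$, exhibiting $\mathfrak{a}\in H^+_J(\mathfrak{g})$.

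Combining injectivity with $H^+_J = (H^-_J)^\perp$ and the non-degeneracy of $\bar\Phi^2_\zeta$ gives $\dim H^+_J(\mathfrak{g}) + \dim H^-_J(\mathfrak{g}) = \dim H^2(\mathfrak{g})$, and together with zero intersection this yields the direct sum decomposition. The step I expect to require the most care is the identification $(\mathcal{Z}^-_J)^{\perp_{\Phi_\zeta}} = \pi_-\mathcal{B}^2$, which demands a careful translation between the pairings $\Psi^2$, $\Phi_\zeta$, and $\Phi_\eta$ and their induced restrictions to $\Lambda^\pm_J$. The hypotheses are used essentially in at least four places: dimension four in $\Lambda^+_J\wedge\Lambda^-_J = 0$ and in the positive-definiteness of $\Phi_\zeta$ on $\Lambda^-_J(\mathfrak{g}^*)$, and unimodularity in $d\Lambda^3 = 0$ and in Lemma~\ref{exact-boundary}.
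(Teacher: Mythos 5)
Your proposal is correct, and its second half follows a genuinely different route from the paper's. For the intersection $H^+_J(\mathfrak{g})\cap H^-_J(\mathfrak{g})=\{0\}$ the two arguments are essentially the same computation in different clothing: the paper picks a $J$-Hermitian metric, notes that $\beta\in\mathcal{Z}^-_J$ is self-dual harmonic and kills $\langle\beta,d\gamma\rangle$ by the adjointness of $d$ and $\delta$ (where unimodularity enters via Lemma \ref{exact-boundary}(5)), whereas you kill $\beta\wedge d\gamma$ directly from $d\Lambda^3(\mathfrak{g}^*)=0$ and then invoke positive definiteness of $\Phi_\zeta$ on $\Lambda^-_J(\mathfrak{g}^*)$ (Lemma \ref{invariant-pairing}(iii), where the metric is hidden); your version is the more elementary phrasing. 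For surjectivity, however, the paper argues by contradiction with Hodge theory: it takes the harmonic self-dual representative of a class orthogonal to $H^+_J(\mathfrak{g})+H^-_J(\mathfrak{g})$ and applies the decomposition identities \eqref{Hodge-4-dimensional-1}--\eqref{Hodge-4-dimensional-3} to the self-dual form $c\varphi$ to manufacture a class in $H^+_J(\mathfrak{g})$ pairing with it to $c^2\neq 0$. You instead prove $H^+_J(\mathfrak{g})=\bigl(H^-_J(\mathfrak{g})\bigr)^{\perp}$ for $\bar\Phi_\zeta$ and conclude by the dimension count together with trivial intersection; the nontrivial inclusion is exactly your chain $\Phi_\zeta(\alpha_-,\mathcal{Z}^-_J)=0\Rightarrow G_\zeta(\alpha_-)\in(\mathcal{Z}^-_J)^{\perp_{\Psi_-}}=\pi_-B^2\Rightarrow\alpha_-\in G_\eta(\pi_-B^2)=\pi_-\mathcal{B}^2$, and this does assemble correctly from Lemma \ref{closed-exact}(iv), Lemma \ref{exact-boundary}(2), Lemma \ref{invariant-pairing}(i), and the defining identity $\Psi^2(G_\zeta\alpha,\beta)=\Phi_\zeta(\alpha,\beta)$ of Lemma \ref{pairing}. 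What your route buys is a metric-free, purely pairing-theoretic argument that exhibits an explicit $J$-invariant representative $\alpha-d\gamma$ and the statement $H^+_J=(H^-_J)^\perp$ of independent interest (it meshes with Remark \ref{homology decomposition}); what the paper's route buys is the direct parallel with the compact four-manifold argument of \cite{DLZ} and the harmonic self-dual technology ($H^\pm_h(\mathfrak{g})$, Lemma \ref{exact-boundary}(5)) that it reuses elsewhere. One cosmetic point: $\det J=1$ holds automatically for any $J$ with $J^2=-\id$ in dimension four, so it is not the orientation hypothesis that forces $\Lambda^+_J(\mathfrak{g}^*)\wedge\Lambda^-_J(\mathfrak{g}^*)=0$; the vanishing is simply the $\Phi_\zeta$-complementarity from Lemma \ref{involution} combined with $\dim\Lambda^4(\mathfrak{g}^*)=1$.
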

\begin{proof}
First of all we show that $H^+_{J}({\mathfrak g})\cap H^-_{J}({\mathfrak g})=\{0\}$. Indeed,
let $0\neq \mathfrak{a}\in H^+_J(\mathfrak{g})\cap
H^-_J(\mathfrak{g})$. Then $\mathfrak{a}=[\alpha]=[\beta]$, where $\alpha\in \mathcal{Z}^+_
J$,
$\beta\in \mathcal{Z}^-_J$ respectively and $\beta=\alpha +d\gamma$. Let $h=\langle, \rangle$ be a $J$-Hermitian inner product on $\mathfrak{g}$,
denote by $\varphi$ the fundamental form of $(J,h)$. Since the inner product is $J$-invariant, it follows immediately that
$\langle \alpha,\beta\rangle =0$. On the other hand, by \eqref{decomposition-invariant-1}, we have that
$\Lambda^-_{J}({\mathfrak g}^*)\subset \Lambda^+_{h}({\mathfrak g}^*)$. Consequently, $\beta$ is self-dual harmonic.
Then,
$$
0<\vert \beta\vert^2 = \langle \beta,\beta\rangle = \langle \beta,\alpha+d\gamma\rangle =
\langle \beta,d\gamma\rangle =0\,.
$$
Hence $\mathfrak{a}=0$.

Now we have to show that
\begin{equation}\label{+}
H^2(\mathfrak{g})=H^+_J({\mathfrak g})+ H^-_{J}({\mathfrak g}).
\end{equation}
Assume on the contrary that \eqref{+} is not true. Then we can find a cohomology class
$0\neq \mathfrak{a}\in H^2(\mathfrak{g})$ such that
$\mathfrak{a}$ is $\bar \Phi_{\zeta}-$orthogonal to
$H^+_{J}({\mathfrak g})+ H^-_{J}({\mathfrak g})$. We may assume
that $\mathfrak{a}\in H^+_{h}({\mathfrak g})$, since
$H^-_{h}({\mathfrak g})\subset H^+_{J}({\mathfrak g})$. Let
$\alpha$ be the $h$-harmonic representative of $\mathfrak{a}$.
Since $\mathfrak{a}\in H^+_{h}({\mathfrak g})$, $\alpha$ is self-dual.
Set $c=\langle \alpha, \varphi\rangle$. Then $c\neq 0$, otherwise
$a\in H^-_{J}({\mathfrak g})$.\medskip

We will use the following decomposition properties, which are the Lie algebraic counterpart of Lemma 2.4 of \cite{DLZ}:
if $\alpha\in\Lambda^+_h({\mathfrak g})$ and $\alpha =\alpha_{\h} + d\lambda + \delta\eta$ is its Hodge decomposition
according to \eqref{Hodge-Invariant}, then
\begin{equation}\label{Hodge-4-dimensional-1}
(d\lambda)^+_h=(\delta\eta)^+_h\,\quad (d\lambda)^-_h=-(\delta\eta)^-_h,
\end{equation}
\begin{equation}\label{Hodge-4-dimensional-2}
\alpha -2(d\lambda)^+_h=\alpha_{\h},
\end{equation}
\begin{equation}\label{Hodge-4-dimensional-3}
\alpha +2(d\lambda)^-_h=\alpha_{\h}+2d\lambda,
\end{equation}
where, according to the decomposition \eqref{self-antiself-invariant}, any
$\beta\in\Lambda^2({\mathfrak g}^*)$ can be written as $\beta=\beta^+_h+\beta^-_h$ with $\beta^{\pm}_h\in\Lambda^{\pm}_h({\mathfrak g}^*)$.

For the sake of completeness, we
give the proof of \eqref{Hodge-4-dimensional-1}, \eqref{Hodge-4-dimensional-2}
(the proof of \eqref{Hodge-4-dimensional-3} being similar).
 Denote $*_h$ by $*$. Since $*\alpha=\alpha$, we get
$$
*\alpha_\h + *d\lambda + *\delta \eta=\alpha_\h + d\lambda + \delta\eta.
$$
By the uniqueness of Hodge decomposition \eqref{Hodge-Invariant-p-dimensional}, we obtain
\begin{equation}\label{star-4-dim}
*d\lambda = \delta\eta\,,\quad *\delta \eta = d\lambda.
\end{equation}
Therefore, by using \eqref{star-4-dim}, we get
$$
(d\lambda)^+_h=\frac{1}{2}(d\lambda + *d\lambda)=\frac{1}{2}(*\delta\eta + \delta\eta)=(\delta\eta)^+_h
$$
and
$$
(d\lambda)^-_h=\frac{1}{2}(d\lambda - *d\lambda)=\frac{1}{2}(*\delta\eta - \delta\eta)=-(\delta\eta)^-_h
$$
i.e., \eqref{Hodge-4-dimensional-1} is proved.\smallskip

By using again \eqref{star-4-dim}, we have
$$
\alpha - 2(d\lambda)^+_h = \alpha - (d\lambda + *d\lambda) =\alpha_\h
$$
and \eqref{Hodge-4-dimensional-2} is proved. \bigskip

Now, applying
\eqref{Hodge-4-dimensional-3} to the self-dual form $c\,\varphi$, we obtain:
$$
c\,\varphi + 2((c\,\varphi)_{{exact}})^-_h=(c\,\varphi)_{\h} +
2(c\,\varphi)_{{exact}}\in \mathcal{Z}^+_{J}\,,
$$
where $(\alpha)_{exact}$ denotes the exact part from of the form $\alpha$ from the Hodge decomposition.
Setting $\mathfrak{b}=[(c\,\varphi)_{\h}]\in H^+_{J}({\mathfrak g})$, we get
$$
0=\overline{\Phi}_\zeta(\mathfrak{a},\mathfrak{b}) =
{\Phi}_\zeta(\alpha,(c\,\varphi)_{\h} + 2(c\,\varphi)_{{exact}}) =
{\Phi}_\zeta(\alpha,c\,\varphi + 2((c\,\varphi)_{{exact}})^-_h) =c^2.
$$
This is absurd, since $c\neq 0$.
\end{proof}

\begin{rem}\label{counterexamples}
If either the condition being $4-$dimensional or unimodular is dropped, then \eqref{+} may not hold. For higher dimensional examples
see \cite{FT}.
\end{rem}

\begin{rem}\label{homology decomposition}
Theorem \ref{Nomizu-1-Lie-algebra} implies that the pairings between
$H_J^{\pm}(\mathfrak{g})$ and $H^J_{\pm}(\mathfrak{g})$ are non-degenerate, and
there is also a homological $J-$decomposition: $H_2(\mathfrak{g})=H^J_{+}({\mathfrak g})\oplus H^J_{-}({\mathfrak g})$. \newline
Indeed,
\begin{itemize}
\item $H^J_{+}(\mathfrak{g})\cap H^J_{-}(\mathfrak{g})=\{0\}$. Let
$\overline{\Psi} : H_2(\mathfrak{g})\times H^2(\mathfrak{g}) \to \R$ be the non-degenerate pairing as in Lemma \ref{closed-exact'}. Let
$[u]\in H^J_{+}(\mathfrak{g})\cap H^J_{-}(\mathfrak{g})$. Then
$$
\overline{\Psi}([u],\cdot)_{\vert_{H^+_J(\mathfrak{g})}}=0\,,\quad \overline{\Psi}([u],\cdot)_{\vert_{H^-_J(\mathfrak{g})}}=0.
$$
Since $H^2(\mathfrak{g})=H^+_J(\mathfrak{g})+H^-_J(\mathfrak{g})$ and $\overline{\Psi}$ is non-degenerate, it follows that $[u]=0$.\vskip.2truecm\noindent
\item $H_2(\mathfrak{g})= H^J_{+}(\mathfrak{g})+ H^J_{-}(\mathfrak{g})$. Let $\bar \Phi_\zeta:H^2(\mathfrak{g})\times H^{2}(\mathfrak{g})\to \R$ be
the non-degenerate pairing as in Lemma \ref{exact-boundary} and let
$\bar\Psi:\hbox{\rm hom}(H^2(\mathfrak{g}),\R)\to H_2(\mathfrak{g})$ be the isomorphism induced by the non-degenerate pair $\bar\Psi$ of Lemma
\ref{closed-exact'}. Then, $\bar\Psi\circ\bar \Phi_\zeta:H^2(\mathfrak{g})\to H_2(\mathfrak{g})$ is an
isomorphism. In fact this map is induced by
$G_\zeta:\Lambda^2(\mathfrak{g}^*)\to \Lambda^2(\mathfrak{g})$. It then follows from the properties of $G_{\zeta}$ in the first part of Lemma \ref{invariant-pairing} and parts 1 and 2 of Lemma \ref{exact-boundary}
that $\bar\Psi\circ\bar \Phi_\zeta$ induces inclusions
$
H^{\pm}_J(\mathfrak{g})\hookrightarrow H^J_\pm(\mathfrak{g}).
$
Since $H^2(\mathfrak{g})$ and $H_2(\mathfrak{g})$ have the same dimension, we have $H_2(\mathfrak{g})= H^J_{+}(\mathfrak{g})\oplus H^J_{-}(\mathfrak{g})$, and
\item $\bar \Psi^{\pm}: H^{\pm}_J(\mathfrak{g})\times H_{\pm}^J(\mathfrak{g})\to \R$ is non-degenerate.
\end{itemize}
\end{rem}
\subsection{Cohomological characterization of tamed $J$}
In this subsection we are going to prove Theorem \ref{tame-char}.

For a linear subspace $V\subset \Lambda^2(\mathfrak{g}^*)$, let $n(V)$ be the maximal dimension of an isotropic subspace with respect to $\Phi_{\zeta}$,
and $b^+(V)$ be the maximal dimension of a positive definite subspace with respect to $\Phi_{\zeta}$.
We use $b^+(\mathfrak{g})$ for $b^+(\mathcal Z)$. We have the following
\begin{lemma} \label{positive-boundary}
Let $\mathfrak{g}$ be a $4-$dimensional unimodular Lie algebra.

(i) $\dim \mathcal Z=b^+(\mathfrak{g})+3$ and $n(\mathcal Z)=3-b^+(\mathfrak{g})$.

(ii) \label{boundary-simple} $\dim \mathcal B=n(\mathcal B)=3-b^+(\mathfrak{g})$.

(iii) $\dim(\Lambda_J^+(\mathfrak{g}^*)\cap \mathcal{B}) \leq 1.$

(iv) \label{+Z} $\dim (\Lambda^+_J(\mathfrak{g}^*)\cap \mathcal{Z})=b^+(\mathfrak{g})+1.$

(v) $\mathcal B\cap \Lambda_J^-(\mathfrak{g}^*)=\{0\}$.

(vi) $b^+(\mathfrak{g})-1\leq h_J^-=\dim (\Lambda^-_J(\mathfrak{g}^*)\cap \mathcal{Z})\leq b^+(\mathfrak{g}).$
\end{lemma}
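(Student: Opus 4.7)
The plan is to prove the six parts in order, relying on three ingredients already in hand: the $\Phi_\zeta$-complementary pair structure from Lemma \ref{exact-boundary}, the signatures of $\Lambda^\pm_J(\mathfrak{g}^*)$ computed in Lemma \ref{invariant-pairing}, and the elementary identity $(U\cap W)^\perp = U^\perp + W^\perp$ valid for subspaces of a space carrying a non-degenerate bilinear form, which here is $(\Lambda^2(\mathfrak{g}^*), \Phi_\zeta)$, of signature $(3,3)$.

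For (i) and (ii), I would start from the fact that $(\mathcal{Z}, \mathcal{B})$ is a $\Phi_\zeta$-complementary pair in the $6$-dimensional ambient space, which gives $\dim \mathcal{Z} + \dim \mathcal{B} = 6$; because $\mathcal{B}\subset \mathcal{Z} = \mathcal{B}^\perp$, the subspace $\mathcal{B}$ is totally isotropic for $\Phi_\zeta$. The induced non-degenerate pairing on $H^2(\mathfrak{g}) = \mathcal{Z}/\mathcal{B}$ has signature $(b^+(\mathfrak{g}), b^+(\mathfrak{g}))$, so $\dim \mathcal{Z} - \dim \mathcal{B} = 2 b^+(\mathfrak{g})$; solving the two equations yields the dimensions in (i) and (ii). Moreover $n(\mathcal{Z}) = \dim(\mathcal{Z} \cap \mathcal{Z}^\perp) = \dim \mathcal{B}$, and, since $\mathcal{B}$ is totally isotropic, $n(\mathcal{B}) = \dim \mathcal{B}$.

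Parts (iii) and (v) reduce immediately to the signatures in Lemma \ref{invariant-pairing}. Since $\Phi_\zeta$ is positive definite on $\Lambda^-_J(\mathfrak{g}^*)$, that subspace contains no nonzero vector $v$ with $\Phi_\zeta(v,v)=0$; as $\mathcal{B}$ is totally isotropic, this forces $\mathcal{B} \cap \Lambda^-_J(\mathfrak{g}^*) = 0$, giving (v). For (iii), the intersection $\Lambda^+_J(\mathfrak{g}^*)\cap \mathcal{B}$ is totally isotropic inside $\Lambda^+_J(\mathfrak{g}^*)$, on which $\Phi_\zeta$ has signature $(1,3)$, so its dimension is at most $\min(1,3)=1$.

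Finally, for (iv) and (vi) I apply the dualization identity. Taking $U = \Lambda^+_J(\mathfrak{g}^*)$ and $W = \mathcal{Z}$, and using that $(\Lambda^+_J, \Lambda^-_J)$ and $(\mathcal{Z}, \mathcal{B})$ are $\Phi_\zeta$-complementary pairs, one obtains $(\Lambda^+_J \cap \mathcal{Z})^\perp = \Lambda^-_J + \mathcal{B}$; by (v) this sum is direct of dimension $\dim \Lambda^-_J + \dim \mathcal{B} = 2+(3-b^+(\mathfrak{g}))$, whence $\dim(\Lambda^+_J \cap \mathcal{Z})=b^+(\mathfrak{g})+1$. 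Analogously $(\Lambda^-_J \cap \mathcal{Z})^\perp = \Lambda^+_J + \mathcal{B}$ has dimension $4 + (3-b^+(\mathfrak{g})) - \dim(\Lambda^+_J \cap \mathcal{B})$, with the last intersection of dimension $k\in\{0,1\}$ by (iii); combined with the observation that $h^-_J = \dim(\Lambda^-_J \cap \mathcal{Z})$, which follows from (v) and the description of $H^-_J$ in \eqref{pm-quot-desc}, this produces $h^-_J = b^+(\mathfrak{g})-1+k$ and hence (vi). No single step is difficult; the main thing is to keep the orthogonal-complement bookkeeping straight, and the key conceptual input is simply that $\Lambda^-_J(\mathfrak{g}^*)$ is positive definite while $\mathcal{B}$ is totally isotropic, which decouples the problem to pure dimension counting.
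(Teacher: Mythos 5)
Your proof is correct. For (i), (ii), (iii) and (v) it runs on essentially the same rails as the paper's: the $\Phi_{\zeta}$-complementary pair $(\mathcal Z,\mathcal B)$ from Lemma \ref{exact-boundary}, total isotropy of $\mathcal B$, the signature $(b^+(\mathfrak{g}),b^+(\mathfrak{g}))$ of the induced form on $H^2(\mathfrak{g})$, and the signatures of $\Lambda_J^{\pm}(\mathfrak{g}^*)$ from Lemma \ref{invariant-pairing}. For (iv) and (vi), however, you take a genuinely different route. The paper proves (iv) by combining the dimension-count lower bound $\dim(\Lambda_J^+(\mathfrak{g}^*)\cap\mathcal Z)\geq b^+(\mathfrak{g})+1$ with a case-by-case exclusion of larger intersections for $b^+(\mathfrak{g})=3,2,1$, each case resting on a signature obstruction; its (vi) comes from positive-definiteness of $\Lambda_J^-(\mathfrak{g}^*)$ (upper bound) together with another dimension count (lower bound). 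You instead invoke the identity $(U\cap W)^{\perp}=U^{\perp}+W^{\perp}$ in the non-degenerate space $(\Lambda^2(\mathfrak{g}^*),\Phi_{\zeta})$, applied to the complementary pairs $(\Lambda_J^+,\Lambda_J^-)$ and $(\mathcal Z,\mathcal B)$, which turns (iv) and (vi) into uniform one-line dimension counts valid for every value of $b^+(\mathfrak{g})$; moreover your computation for (vi) gives the sharper identity $h_J^-=b^+(\mathfrak{g})-1+\dim\bigl(\Lambda_J^+(\mathfrak{g}^*)\cap\mathcal B\bigr)$, which is precisely the mechanism exploited later in Theorem \ref{tame-new}, so your argument makes that link explicit where the paper re-derives it. One small caveat: in (i) you silently read $n(\mathcal Z)$ as $\dim(\mathcal Z\cap\mathcal Z^{\perp})=\dim\mathcal B$, i.e.\ as the null space of $\Phi_{\zeta}\vert_{\mathcal Z}$; under the literal wording ``maximal dimension of an isotropic subspace'' one would instead get $3$ (the radical $\mathcal B$ extends by a Lagrangian of the $(b^+,b^+)$ quotient), so your reading is the one under which the stated formula holds --- the paper's own proof is silent on this point, so this is not a gap in your argument, but it deserves a sentence.
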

\begin{proof}
(i)+(ii) It follows from the third part of Lemma \ref{exact-boundary} that $(\mathcal Z, \mathcal B)$ is a $\Phi_{\zeta}-$complementary pair.
In particular, $\mathcal B$ is isotropic. Moreover,
$$
\dim \mathcal Z+\dim \mathcal B=6 \quad \hbox{ and } \quad \dim \mathcal Z-\dim \mathcal B=2b^+(\mathfrak{g}).
$$

(iii) We just need to exclude the case that $\mathcal{B}\subset \Lambda_J^+(\mathfrak{g}^*)$ when $b^+(\mathfrak{g})=1$. This is impossible
because
$$
n(\mathcal{B})=2 \quad \hbox{ and } \quad n(\Lambda_J^+(\mathfrak{g}^*))=1.
$$
To see that $n(\Lambda_J^+(\mathfrak{g}^*))=1$, notice that any isotropic subspace is at least codimension 3 since $b^+(\Lambda_J^+(\mathfrak{g}^*))=3$.

(iv) Consider the intersection $\Lambda_J^+(\mathfrak{g}^*)\cap \mathcal{Z}$.
First of all, by (i),
$$
b^+(\mathfrak{g})+1\leq \dim(\Lambda_J^+(\mathfrak{g}^*)\cap \mathcal{Z}) \leq \dim (\Lambda_J^+(\mathfrak{g}^*)=4.
$$
So $\dim(\Lambda_J^+(\mathfrak{g}^*)\cap \mathcal{Z})=4$ when $b^+(\mathfrak{g})=3$.

Suppose $b^+(\mathfrak{g})=2$. Then
$\Lambda^+_J(\mathfrak{g}^*)$ cannot be contained in $ \mathcal{Z}$ since
$$b^+(\Lambda^+_J(\mathfrak{g}^*))=3 \quad \hbox{ and } \quad b^+(\mathcal{Z})=2.$$
Thus $\Lambda^+_J(\mathfrak{g}^*)\cap \mathcal{Z}$ always has dimension 3.

Suppose $b^+(\mathfrak{g})=1$. Let $L^+$ be a 3-dimensional positive definite subspace of $\Lambda^+_J(\mathfrak{g}^*)$.
We claim that $\Lambda^+_J(\mathfrak{g}^*)\cap \mathcal{Z}$ cannot have dimension 3 or 4. This is because for any 3-dimensional subspace $L\subset \Lambda^+_J(\mathfrak{g}^*)$,
$$b^+(\mathcal Z)=1 \quad \hbox{ but } \quad b^+(\Lambda^+_J(\mathfrak{g}^*))\geq b^+(L\cap L^+)=2.$$

(v) This is clear since $\mathcal B$ is isotropic and $\Lambda^-_J(\mathfrak{g}^*)$ is positive definite.

(vi) By (v) and \eqref{pm-quot-desc} we have $h_J^-=\dim (\Lambda^-_J(\mathfrak{g}^*)\cap \mathcal Z)$. Since $\Lambda^-_J(\mathfrak{g}^*)$ is positive definite we have
$\dim (\Lambda^-_J(\mathfrak{g}^*)\cap \mathcal Z)\leq b^+(\mathfrak{g}).$ Since $\dim(\Lambda^-_J(\mathfrak{g}^*))=2$, by (i), we have $b^+(\mathfrak{g})-1\leq \dim (\Lambda^-_J(\mathfrak{g}^*)\cap \mathcal Z)$
\end{proof}
We now characterize tameness in terms of $h_J^{\pm}$, from which Theorem \ref{tame-char} follows.
\begin{theorem} \label{tame-new} Let $\mathfrak{g}$ be a $4-$dimensional unimodular Lie algebra. The following are equivalent.

0) $J$ is tamed;

1) $\mathcal{B}\cap\Lambda^+_J(\mathfrak{g}^*)=\{ 0 \}$;\vskip.1truecm\noindent

2) $h_J^+= b^+(\mathfrak{g})+1$;
\vskip.1truecm\noindent

3) $h_J^-=b^+(\mathfrak{g})-1$;\vskip.1truecm\noindent

4) $H_J^+(\mathfrak{g})$ has signature $(1, b^+(\mathfrak{g}))$.
\end{theorem}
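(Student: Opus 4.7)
\medskip

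\noindent\textbf{Proof plan.} The plan is to establish the cyclic chain $0) \Leftrightarrow 1) \Leftrightarrow 2) \Leftrightarrow 3) \Leftrightarrow 4)$ using Theorem \ref{compatible}, Theorem \ref{Nomizu-1-Lie-algebra}, and the dimension/signature calculations in Lemma \ref{positive-boundary} together with Lemma \ref{invariant-pairing}.

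First I would show $0) \Leftrightarrow 1)$. By Theorem \ref{compatible} (unimodularity forces $B\wedge B = 0$), $J$ is tamed iff $B\cap \Lambda^+_J(\mathfrak{g}) = \{0\}$. Now the map $G_\eta:\Lambda^2(\mathfrak{g})\to \Lambda^2(\mathfrak{g}^*)$ is an isomorphism which sends $B$ to $\mathcal{B}$ (Lemma \ref{exact-boundary}, part 2) and $\Lambda^+_J(\mathfrak{g})$ to $\Lambda^+_J(\mathfrak{g}^*)$ (Lemma \ref{invariant-pairing}, part (i), applied to $G_\eta$ instead of $G_\zeta$). Hence the vector version and the form version of the condition are equivalent.

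For $1) \Leftrightarrow 2)$, I would use the description $H^+_J(\mathfrak{g}) = \mathcal{Z}^+_J / (\mathcal{B}\cap \Lambda^+_J(\mathfrak{g}^*))$ from \eqref{pm-quot-desc}, so that
$$h_J^+ \;=\; \dim \mathcal{Z}^+_J \;-\; \dim\bigl(\mathcal{B}\cap \Lambda^+_J(\mathfrak{g}^*)\bigr).$$
Lemma \ref{positive-boundary} (iv) gives $\dim \mathcal{Z}^+_J = b^+(\mathfrak{g})+1$, and (iii) gives $\dim(\mathcal{B}\cap\Lambda^+_J(\mathfrak{g}^*))\in\{0,1\}$, so $h_J^+ = b^+(\mathfrak{g})+1$ exactly when this intersection is zero. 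For $2) \Leftrightarrow 3)$, by Theorem \ref{Nomizu-1-Lie-algebra} we have $H^2(\mathfrak{g}) = H^+_J(\mathfrak{g}) \oplus H^-_J(\mathfrak{g})$, and a quick count using parts (i)-(ii) of Lemma \ref{positive-boundary} shows $\dim H^2(\mathfrak{g}) = 2b^+(\mathfrak{g})$. Therefore $h_J^+ + h_J^- = 2b^+(\mathfrak{g})$, and the equivalence $h_J^+ = b^+(\mathfrak{g})+1 \Leftrightarrow h_J^- = b^+(\mathfrak{g})-1$ is immediate.

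Finally, for $3) \Leftrightarrow 4)$ I would analyze the signature of $\bar\Phi_\zeta$ on the pieces of the decomposition $H^2 = H^+_J \oplus H^-_J$. Since $\Lambda^+_J(\mathfrak{g}^*)$ and $\Lambda^-_J(\mathfrak{g}^*)$ are $\Phi_\zeta$-orthogonal (Lemma \ref{involution} applied to the involution $\alpha\mapsto \alpha(J\cdot,J\cdot)$), the subspaces $H^+_J$ and $H^-_J$ are $\bar\Phi_\zeta$-orthogonal. By part (v) of Lemma \ref{positive-boundary}, $\mathcal B\cap \Lambda^-_J(\mathfrak g^*)=\{0\}$, so each class in $H^-_J$ has a unique representative in $\mathcal Z^-_J\subset \Lambda^-_J(\mathfrak g^*)$; since $\Phi_\zeta$ is positive definite on $\Lambda^-_J(\mathfrak g^*)$ (Lemma \ref{invariant-pairing}, part (iii)), the restriction of $\bar\Phi_\zeta$ to $H^-_J$ is positive definite, of signature $(h_J^-,0)$. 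Given that the total signature on $H^2(\mathfrak g)$ is $(b^+(\mathfrak g),b^+(\mathfrak g))$ and the orthogonal complement of a positive definite subspace has signature $(b^+(\mathfrak g)-h_J^-,\,b^+(\mathfrak g))$, I obtain signature $(b^+(\mathfrak g)-h_J^-,\,b^+(\mathfrak g))$ on $H^+_J$; this equals $(1,b^+(\mathfrak g))$ exactly when $h_J^-=b^+(\mathfrak g)-1$. The main obstacle I anticipate is verifying carefully that the pieces $H^\pm_J$ are genuinely $\bar\Phi_\zeta$-orthogonal and that $\bar\Phi_\zeta$ is non-degenerate on $H^2(\mathfrak g)$ (which already follows from Lemma \ref{exact-boundary}, part 4), so that the signature additivity argument is valid.
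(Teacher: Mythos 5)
Your proposal is correct and follows essentially the same route as the paper: Theorem \ref{compatible} plus the isomorphisms $G_\eta,G_\zeta$ for $0)\Leftrightarrow 1)$, the dimension counts of Lemma \ref{positive-boundary} for $2)$, and Theorem \ref{Nomizu-1-Lie-algebra} for $2)\Leftrightarrow 3)$. Your $3)\Leftrightarrow 4)$ step is just a more explicitly worked-out version of the paper's signature bookkeeping (the paper's terse "$3)\Rightarrow b^+(H_J^+(\mathfrak{g}))=1$"), using the same orthogonality of $H_J^{\pm}$ and positive definiteness of $\Lambda_J^-(\mathfrak{g}^*)$.
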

\begin{proof}
0)=1)
By the first part of Lemma \ref{invariant-pairing}, we obtain that
$G_{\zeta}(\Lambda_J^{+}(\mathfrak{g}^*))=\Lambda_J^{+}(\mathfrak{g})$ and by the second part of Lemma \ref{exact-boundary}, that $G_\eta(B)=\mathcal{B}$.
Therefore, $B\cap\Lambda^+_J(\mathfrak{g})=\{ 0 \}$ if and only if $\mathcal{B}\cap\Lambda^+_J(\mathfrak{g}^*)=\{ 0 \}$. By Theorem
\ref{compatible}, $J$ is tamed if
and only if $B\cap\Lambda^+_J(\mathfrak{g})=\{ 0 \}$.
Hence 0)=1) is proved.
\vskip.2truecm

0)=2) This is about the intersection $\Lambda_J^+(\mathfrak{g}^*)\cap \mathcal Z$. It
cannot contain $\mathcal{B}$ if $J$ is tamed. Thus in this case, by \eqref{pm-quot-desc},
we have $h_J^+= \Lambda_J^+(\mathfrak{g}^*)\cap \mathcal Z$, which is equal to $b^+(\mathfrak{g})+1$ by the fourth part of Lemma \ref{positive-boundary}.

Suppose $J$ is not tamed, then $\Lambda_J^+(\mathfrak{g}^*)\cap \mathcal{B}$ is 1 dimensional by the third part of Lemma \ref{positive-boundary}.
Thus $h_J^+= b^+(\mathfrak{g})$.

\vskip.2truecm

2)=3) Since $h_J^-=b^2-h_J^+$ by Theorem \ref{Nomizu-1-Lie-algebra}, we have the required inequality $h_J^-=b^+(\mathfrak{g})-1$.
\vskip.2truecm

4)=2) It suffices to show that 2)$\Rightarrow$4). This is true because 2)$\Rightarrow$3), and 3)$\Rightarrow$ $b^+(H_J^+(\mathfrak{g}))=1$.

The proof of Theorem \ref{tame-new} is complete.
\end{proof}
\vskip.2truecm

\begin{cor}\label{tame-new-cor} Let $\mathfrak{g}$ be a $4-$dimensional unimodular Lie algebra.

$\bullet$ When $b^+(\mathfrak{g})=3$, every $J$ is tamed and complex, and has $h_J^-=2$.

$\bullet$ When $b^+(\mathfrak{g})=2$, $J$ is either tamed or complex, corresponding to $h_J^-=1$ or $\Lambda_J^-(\mathfrak{g}^*)\subset \mathcal{Z}$ respectively.

$\bullet$ When $b^+(\mathfrak{g})=1$, $J$ is either tamed or $\Lambda_J^-(\mathfrak{g}^*)\cap \mathcal{Z}\ne \{0\}$.
\end{cor}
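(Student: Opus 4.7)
The plan is to read off this corollary as a routine consequence of Theorem \ref{tame-new} combined with the numerical bounds on $h_J^-$ in Lemma \ref{positive-boundary}(vi) and the integrability criterion in Lemma \ref{integrability-dim4}. The starting observation is that $\dim\Lambda_J^-(\mathfrak{g}^*)=2$ in the $4$-dimensional setting, and by Lemma \ref{positive-boundary}(v)-(vi) one has
\[
h_J^-=\dim\bigl(\Lambda_J^-(\mathfrak{g}^*)\cap\mathcal{Z}\bigr)\in\{b^+(\mathfrak{g})-1,\,b^+(\mathfrak{g})\}\cap\{0,1,2\}.
\]
Moreover, Theorem \ref{tame-new} tells us that $J$ is tamed exactly in the lower case $h_J^-=b^+(\mathfrak{g})-1$, and the upper case $h_J^-=b^+(\mathfrak{g})$ corresponds to $J$ not being tamed.

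First I would dispatch the case $b^+(\mathfrak{g})=3$. Here the bound $h_J^-\leq 2<3=b^+(\mathfrak{g})$ forces $h_J^-=2=b^+(\mathfrak{g})-1$, so $J$ is tamed. Since $h_J^-=2=\dim\Lambda_J^-(\mathfrak{g}^*)$, we get $\Lambda_J^-(\mathfrak{g}^*)\subset\mathcal{Z}$, and Lemma \ref{integrability-dim4} then yields the integrability of $J$.

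Next, for $b^+(\mathfrak{g})=2$ the two allowed values of $h_J^-$ are $1$ and $2$. The value $h_J^-=1=b^+(\mathfrak{g})-1$ means $J$ is tamed by Theorem \ref{tame-new}. The value $h_J^-=2$ means $J$ is not tamed, and again saturates the dimension of $\Lambda_J^-(\mathfrak{g}^*)$, so $\Lambda_J^-(\mathfrak{g}^*)\subset\mathcal{Z}$ and Lemma \ref{integrability-dim4} gives integrability. Finally, when $b^+(\mathfrak{g})=1$ the allowed values are $h_J^-=0$ (tamed by Theorem \ref{tame-new}) and $h_J^-=1$ (not tamed), the latter meaning exactly $\Lambda_J^-(\mathfrak{g}^*)\cap\mathcal{Z}\neq\{0\}$.

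There is no real obstacle; the only subtlety to spell out is why in the top cases one passes from $h_J^-=\dim\Lambda_J^-(\mathfrak{g}^*)$ to $\Lambda_J^-(\mathfrak{g}^*)\subset\mathcal{Z}$ (immediate from $\mathcal{B}\cap\Lambda_J^-(\mathfrak{g}^*)=\{0\}$ and the description of $H_J^-$ as a quotient), and then apply Lemma \ref{integrability-dim4} to conclude that $J$ is complex.
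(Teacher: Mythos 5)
Your proposal is correct and follows essentially the same route as the paper: it combines Theorem \ref{tame-new} with the bounds $h_J^-=\dim(\Lambda_J^-(\mathfrak{g}^*)\cap\mathcal{Z})\in\{b^+(\mathfrak{g})-1,b^+(\mathfrak{g})\}$ from Lemma \ref{positive-boundary} and invokes Lemma \ref{integrability-dim4} when $\Lambda_J^-(\mathfrak{g}^*)\subset\mathcal{Z}$. The only difference is that you spell out the first and third bullets, which the paper treats as immediate, handling only the $b^+(\mathfrak{g})=2$ case explicitly.
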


\begin{proof}
For the three bullets, only the second one requires a proof.
In this case, $b^+(\mathfrak{g})=2$, $J$ is tamed if and only if $h_J^-=1$.
If $J$ is not tamed then $h_J^-=2$, namely, $\Lambda_J^-(\mathfrak{g}^*)\subset \mathcal{Z}$.
By Lemma \ref{integrability-dim4} such a $J$ is integrable.
\end{proof}
Together with Proposition \ref{2-plane}, Corollary \ref{tame-new-cor} can be used to show that
there are non-tamed almost complex structure on any $4-$dimensional unimodular Lie algebra with $b^+(\g)\leq 2$. We will be content to illustrate this point with examples in \ref{examples}.
\subsection{Tamed and almost K\"ahler cone}
In this subsection we prove Theorem \ref{AK-tamed-cones-intro}.
We start with a description of the cones $\mathcal K_J^t$ and $\mathcal K_J^c$ in terms of $HPC_J$ in arbitrary dimension.

\begin{prop} \label{g} Let $J$ be an almost complex structure on a Lie algebra $\g$.

If $J$ is tamed, then under the (non-degenerate) pairing between
$H^2(M;\mathbb R)$ and $H_2(M;\mathbb R)$, the $J-$tamed cone $\mathcal K_J^t\subset
H^2(M;\mathbb R)$ is the interior of the dual cone of $HPC_J\subset H_2(M;\mathbb R)$.

If $J$ is almost K\"ahler,
then under the pairing between $H_J^+$ and
$H_+^J$, the $J-$compatible\emph{} cone $\mathcal K_J^c\subset H_J^{+}(\mathfrak{g})$ is the interior of the
dual cone of $HPC_J\subset H^J_{+}(M)$.
\end{prop}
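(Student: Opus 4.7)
The plan is to prove both bullets by a single duality template; I describe the tamed case in detail, since the almost K\"ahler case is entirely parallel after replacing $\Psi^2$ by the restricted pairing $\Psi_+$ of Lemma \ref{closed-exact}, $\mathcal B^2$ by $\mathcal B^2 \cap \Lambda_J^+(\mathfrak g^*)$, and $\Lambda^2(\mathfrak g^*)$ by $\Lambda_J^+(\mathfrak g^*)$ throughout. The template has an easy evaluation direction (giving $\mathcal K_J^t$ inside the interior of the dual cone of $HPC_J$) and a Hahn--Banach direction for the reverse inclusion.

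For the evaluation direction I would take a taming representative $\omega$ of a class in $\mathcal K_J^t$ together with $u = \sum_i c_i\, v_i \wedge Jv_i \in PC_J \cap Z^2$ with $c_i > 0$, and compute
$$
\bar\Psi^2([u],[\omega]) = \omega(u) = \sum_i c_i\, \omega(v_i \wedge Jv_i) \geq 0,
$$
with equality iff $u = 0$. Since $J$ tamed forces $B^2 \cap PC_J = \{0\}$ via Proposition \ref{no-invariant-positive-boundary}, the map $u \mapsto [u]$ from $PC_J \cap Z^2$ to $HPC_J$ is injective, so strict positivity passes to all of $HPC_J \setminus \{0\}$, placing $[\omega]$ in the interior of the dual cone.

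For the reverse direction I would fix a closed representative $\omega$ of a class in the interior of the dual cone, set $V = \omega + \mathcal B^2 \subset \Lambda^2(\mathfrak g^*)$ and $F = \{f \in \Lambda^2(\mathfrak g^*) : f > 0 \text{ on } PC_J \setminus \{0\}\}$, and aim to show $V \cap F \neq \emptyset$: any element of $V \cap F$ would be a closed $J$-taming representative of $[\omega]$. Assuming the contrary, geometric Hahn--Banach produces a nonzero functional $\ell(\cdot) = \Psi^2(u,\cdot)$ with $u \in \Lambda^2(\mathfrak g)$, and a constant $c$, with $\ell \leq c$ on $V$ and $\ell > c$ on $F$. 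Boundedness along the affine direction $\mathcal B^2$ forces $\ell \equiv 0$ on $\mathcal B^2$, whence $u \in Z^2$ by Lemma \ref{closed-exact'}; the scaling action on the cone $F$ forces $c \leq 0$ and $\ell \geq 0$ on $F$; and bipolarity places $u$ in the closed dual cone of $F$ inside $\Lambda^2(\mathfrak g)$.

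The main obstacle is this last bipolarity step, namely identifying the closed dual cone of $F$ with $PC_J \cup \{0\}$. Closedness of $PC_J \cup \{0\}$ should follow because $v \mapsto v \wedge Jv$ sends the unit sphere of $(\mathfrak g, h)$ (any inner product) to a compact set avoiding $0$ (linear independence of $v$ and $Jv$ when $v \neq 0$), so the positive conic hull of that compact slice is closed. Granting this, $u$ is a nonzero element of $PC_J \cap Z^2$ (else $\ell = 0$), hence $[u] \in HPC_J \setminus \{0\}$; the hypothesis then yields $\omega(u) > 0$, contradicting $\ell(\omega) = \omega(u) \leq c \leq 0$. Any element of $V \cap F$ so produced then tames $J$ and represents $[\omega]$, completing the reverse inclusion and the proposition.
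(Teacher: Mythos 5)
Your argument is correct in substance, but it runs along a genuinely different (dual) track from the paper's. The paper proves the compatible-cone statement directly: it transports the class $e$ through the isomorphism $\sigma$ of Lemma \ref{closed-exact} (v) to a functional on $\pi_+Z$ vanishing on $\pi_+B$, separates its kernel hyperplane from the compact slice $K=\{w\in PC_J \,\vert\, T_w(\varphi)=1\}$ by Lemma \ref{Hahn-Banach}, and then quotes the bijection of Proposition \ref{no-invariant-positive-boundary} to read the extended functional as a compatible (resp.\ taming) form representing $e$. You instead argue by contradiction on the form side: you separate the affine space $\omega+\mathcal B^2$ of closed representatives from the open cone $F$ of forms positive on $PC_J\setminus\{0\}$, and identify the separating functional, via $\Psi^2$, Lemma \ref{closed-exact'} and the bipolar theorem, with a nonzero $u\in PC_J\cap Z$, contradicting strict positivity of the class. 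What your route buys is that it bypasses $\sigma$ and produces the taming representative inside the prescribed cohomology class at once; what it costs is the extra convex-geometric input that $PC_J$ is a closed cone, which you supply correctly by the compact-slice argument (the slice avoids $0$ because it pairs to $1$ with the fundamental form), together with the identification $(\overline F)^{*}=PC_J$.

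Two points need attention. First, in the contradiction step you pass from ``$u\neq 0$'' to ``$[u]\in HPC_J\setminus\{0\}$''; this requires $B\cap PC_J=\{0\}$, which is exactly the tamedness hypothesis through the first bullet of Proposition \ref{no-invariant-positive-boundary}. You cite that fact in the evaluation direction, but it is here that it is indispensable: if $[u]=0$ then $\omega(u)=0$ and no contradiction with $\ell(\omega)\le c\le 0$ results. Second, the almost K\"ahler case is not quite a verbatim substitution: with $\Psi_+$ in place of $\Psi^2$, vanishing of the separating functional on $\mathcal B\cap\Lambda^+_J(\mathfrak{g}^*)$ places $u$ in $\pi_+Z$ (Lemma \ref{closed-exact} (iv)), not in $Z$, so the positive vector you obtain need not itself be closed, and one must still relate its class in $\pi_+Z/\pi_+B$ to $HPC_J$ (via the identification coming from $\sigma$ and, in the unimodular four-dimensional setting, Remark \ref{homology decomposition}). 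The paper's own proof passes over this same point quickly, so it is a matter of matching its level of detail rather than a defect peculiar to your approach; still, both gaps should be filled with a sentence each.
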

\begin{proof} The proofs are similar, so we just prove the slightly harder second statement.
First of all, under the (possibly degenerate) pairing between
$H_J^{+}(\mathfrak{g})$ and $H^J_{+}(\mathfrak{g})$, $\mathcal
K_J^c$ is contained in the interior of the dual cone of $HPC_J$.

It remains to prove that if $e\in H_J^{+}(\mathfrak{g})$ is
positive on $HPC_J\backslash \{0\}$, then it is represented by a
$J-$compatible form.

Consider the isomorphism $\sigma$ in Lemma \ref{closed-exact} (v), and the element $\sigma(e)$ in
$\hom (\frac{\pi_{+}Z}{\pi_{+}B}, \R)$, which pulls back to a functional $L$ on
${\pi_{+}Z}$ vanishing on
${\pi_{+}B}$. Denote the kernel hyperplane of
$L$ in ${\pi_{+}Z}$ also by $L$. By our choice
of $e$, as subsets of $\Lambda_J^+(\mathfrak{g})$, $L$ and
$PC_J\backslash \{0\}$ are disjoint.

By Lemma \ref{Hahn-Banach}, we get a hyperplane $\mathcal L$ in
$\Lambda_J^+(\mathfrak{g})$ containing $L$ and disjoint from
$PC_J\backslash \{0\}$. The hyperplane $\mathcal L$ determines
a functional $\alpha$ on $\Lambda_J^+(\mathfrak{g})$, vanishing on $L\supset \pi_+B$ and being positive on
$PC_J \backslash \{0\}$.

By Lemma \ref{no-invariant-positive-boundary},
$\alpha$ is a
$J-$compatible symplectic form. Moreover, by construction we have
$[\alpha]=e$.
\end{proof}

Finally, we restate and prove Theorem \ref{AK-tamed-cones-intro}.
\begin{theorem}\label{AK-tamed-cones}
For any tamed structure $J$ on a $4-$dimensional unimodular Lie algebra,
the tamed cone $\mathcal K_J^t$ is $\mathcal K_J^c + H_J^-$,
and the compatible cone $\mathcal K_J^c$ is a connected component of
$$
\mathcal P_J=\{e\in H^+_J(\mathfrak{g})\,\,\,|\,\,\,e^2>0\},
$$
where $e^2=\bar \Phi_\zeta(e,e)$.
\end{theorem}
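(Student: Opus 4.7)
The plan is to handle the two assertions separately. The core ingredient is a linear-algebra Sign Lemma on $\Lambda_J^+(\mathfrak{g}^*)$, which, combined with the uniqueness of $J$-invariant closed representatives (granted by Theorem \ref{tame-new} since $J$ is tamed), will yield the connected component statement; the cone decomposition $\mathcal K_J^t = \mathcal K_J^c + H_J^-$ will follow from Proposition \ref{g} together with the cohomological $J$-decomposition of Theorem \ref{Nomizu-1-Lie-algebra}.

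I will first establish the following \emph{Sign Lemma}: for any $\alpha \in \Lambda_J^+(\mathfrak{g}^*)$ with $\Phi_\zeta(\alpha,\alpha) > 0$, either $\alpha$ or $-\alpha$ is $J$-compatible. Fix a $J$-Hermitian metric $h$ with fundamental form $\varphi$ and decompose $\alpha = c\varphi + \tilde\alpha$ with $\tilde\alpha \in \Lambda_h^-(\mathfrak{g}^*)$ via \eqref{decomposition-invariant-1}. The symmetric bilinear form $S_\alpha(u,v)=\alpha(u,Jv)$ equals $c\,h + S_{\tilde\alpha}$; the endomorphism associated to $S_{\tilde\alpha}$ commutes with $J$ (by $J$-invariance of $\tilde\alpha$) and is trace-free (from $\tilde\alpha\wedge\varphi = 0$), so on a $4$-dimensional space its eigenvalues must be $\pm|\tilde\alpha|/\sqrt{2}$, each with multiplicity $2$. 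Since $\Phi_\zeta(\alpha,\alpha) = 2c^2 - |\tilde\alpha|^2$, the operator $S_\alpha$ is definite (positively if $c>0$, negatively if $c<0$) precisely when $\Phi_\zeta(\alpha,\alpha) > 0$, which proves the Sign Lemma.

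For the connected-component statement, tameness of $J$ and Theorem \ref{tame-new} give $\mathcal B \cap \Lambda_J^+(\mathfrak{g}^*) = \{0\}$, so each class $e \in H_J^+(\mathfrak{g})$ has a unique $J$-invariant closed representative $\alpha$, with $e^2 = \Phi_\zeta(\alpha,\alpha)$. The Sign Lemma then shows $\mathcal P_J = \mathcal K_J^c \sqcup (-\mathcal K_J^c)$, the two alternatives being mutually exclusive because the sign of $\alpha(v,Jv)$ is determined. Both summands are open, convex, and nonempty (the latter since $J$ is almost K\"ahler by Theorem \ref{compatible}), hence connected, and so they are the two connected components of $\mathcal P_J$. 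For $\mathcal K_J^t = \mathcal K_J^c + H_J^-$, the inclusion $\supseteq$ is immediate: any $\omega^- \in \Lambda_J^-(\mathfrak{g}^*)$ satisfies $\omega^-(v,Jv) = 0$ by $J$-anti-invariance, so adding a closed such $\omega^-$ to a $J$-compatible form preserves tameness. For the reverse inclusion I apply Proposition \ref{g}, which identifies $\mathcal K_J^t$ with the interior of the dual cone of $HPC_J$ in $H^2(\mathfrak{g})$. The decomposition $H^2(\mathfrak{g}) = H_J^+(\mathfrak{g}) \oplus H_J^-(\mathfrak{g})$ of Theorem \ref{Nomizu-1-Lie-algebra}, together with the vanishing $\bar\Psi^2(H_J^-,H^J_+) = 0$ from Lemma \ref{closed-exact}(iii), reduces this dual cone to the sum of the dual cone in $H_J^+$ with $H_J^-$, whose interior is $\mathcal K_J^c + H_J^-$ by the second part of Proposition \ref{g}.

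The main obstacle is the Sign Lemma, which requires the eigenvalue analysis of a trace-free $J$-commuting symmetric operator on a $4$-dimensional space. Once that is in hand, everything else is essentially assembly: combining uniqueness of $J$-invariant representatives for tamed $J$, the cohomological $J$-decomposition, and the dual-cone description from Proposition \ref{g}.
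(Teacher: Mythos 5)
Your proposal is correct, but it reaches the connected--component statement by a different route than the paper. The paper works entirely on the dual side: by Proposition \ref{g} the compatible cone is the interior of the dual of $HPC_J$ under the $H_J^+$--$H_+^J$ pairing, and it then combines the signature $(1,b^+(\mathfrak{g}))$ of $H_J^+(\mathfrak{g})$ from part 4) of Theorem \ref{tame-new}, the inequality $\Phi_\eta(u,u)\geq 0$ for positive vectors (Lemma \ref{invariant-pairing} (iv)) and the light cone lemma in Lorentzian signature to identify that dual with a component of $\mathcal P_J$. You instead prove a pointwise Sign Lemma in $\Lambda_J^+(\mathfrak{g}^*)$ (a $J$-invariant form of positive square is, up to sign, positive on the cone of positive $2$-vectors, via the eigenvalue analysis of the trace-free $J$-commuting symmetric operator attached to the $\Lambda_h^-$ part), and then use tameness through the equivalence 0)=1) of Theorem \ref{tame-new} to get a unique closed $J$-invariant representative in each class, which yields the sharper statement $\mathcal P_J=\mathcal K_J^c\sqcup(-\mathcal K_J^c)$ directly, bypassing $HPC_J$ and the light cone lemma for this half (nonemptiness still comes from Theorem \ref{compatible}, as you note). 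For the tamed cone both arguments rest on Proposition \ref{g}; you make explicit the reduction the paper leaves implicit, namely that $HPC_J\subset H_+^J$ pairs trivially with $H_J^-(\mathfrak{g})$ (Lemma \ref{closed-exact} (iii)) so the dual in $H^2(\mathfrak{g})=H_J^+\oplus H_J^-$ splits off an $H_J^-$ factor, and you add the easy direct inclusion $\mathcal K_J^c+H_J^-\subseteq\mathcal K_J^t$. The trade-off: the paper's duality argument is shorter and uniform with its cone formalism, while yours is more elementary and self-contained at the level of linear algebra and gives the explicit two-component description of $\mathcal P_J$; the only points you assert without full detail (the eigenvalues $\pm|\tilde\alpha|/\sqrt 2$, i.e. $\mathrm{tr}(A^2)=2|\tilde\alpha|^2$, and the openness of $\mathcal K_J^c$ via the linear isomorphism $\mathcal Z_J^+\cong H_J^+$ granted by tameness) are routine and correct.
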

\begin{proof} The dimension of the compatible cone is the dimension of $H^+_J(\mathfrak{g})$.
Recall we define $HPC_J$ to be the cone in $H_+^J(\mathfrak{g})$ generated by classes of closed positive vectors,
$HPC_J=\{[u]\,\,\,|\,\,\, u\in PC_J\cap Z\}$.
By Proposition \ref{g},
the compatible cone is the $H^+_J-H_+^J$ dual of the cone $HPC_J$,
and the tamed cone is the $H^2-H_2$ dual of the cone $HPC_J$.
Furthermore, the tamed cone is the sum of the compatible cone and $H^-_J(\mathfrak{g})$.

Now we determine the $H^+_J-H_+^J$ dual of $HPC_J$.
When $J$ is tamed,
$H^+_J(\mathfrak{g})$ has signature $(1,b^+(\mathfrak{g}))$ by part 4) of Theorem \ref{tame-new}.
Any positive vector $u\in PC_J$ satisfies $\Phi_\eta(u,u) \geq 0$ by Lemma \ref{invariant-pairing} (iv).
By the light cone lemma and Lemma \ref{invariant-pairing} (i), we have
the $H^+_J-H_+^J$ dual of $HPC_J$ is a connected component of
$\mathcal P_J$.
\end{proof}
\medskip

\subsection{Examples}\label{examples}
If $J$ is a tamed almost complex structure on $\g$, then $\g$ is necessarily symplectic.
According to \cite[Theorem 9]{Chu}, a four dimensional symplectic Lie algebra is solvable. For
the classification of the four dimensional solvable Lie algebras see e.g.\cite[Theorem 1.5]{ABDO} (where product structures are investigated) or \cite[Theorem 4.1]{MS}.

The unimodular symplectic Lie algebras are
classified in \cite{Ov}. According to that list, in each case we can find a basis $\{f_1,\ldots ,f_4\}$ of $\mathfrak{g}$, such that
$$
\begin{array}{llll}
0) & \R^4, & [f_i,f_j]=0\,, & i,j=1,\ldots ,4\\
1) & \mathfrak{nil}^3\times \R, & [f_1,f_3]=f_2, &\hbox{}\\
2) & \mathfrak{nil}^4, & [f_4,f_2]=f_1, &[f_4,f_3]=f_2,\\
3) & \mathfrak{sol}^3\times \R, & [f_4,f_1]=f_1, &[f_2,f_4]=f_2,\\
4) & \mathfrak{r}^{'}_{3,0}\times \R, & [f_1,f_3]=f_2, &[f_2,f_1]=f_3.
\end{array}
$$
Denoting by $\{f^1,\ldots,f^4\}$ the dual basis of $\{f_1,\ldots,f_4\}$, we set $f^{ij}:=f^i\wedge f^j$
and so on. It follows from Proposition \ref{2-plane} and the last two bullets of Corollary \ref{tame-new-cor} that there are non-tamed $J$
when $b^+(\mathfrak{g})=1, 2$.
Applying the formula \eqref{anti-invariant-covectors} and Corollary \ref{tame-new-cor} we will produce explicitly a $2$-parameter
and a $1$-parameter family respectively of non-tamed almost complex structures on Lie algebras $\mathfrak{nil}^3\times\R$ and $\mathfrak{nil}^4$, having
$b^+(\mathfrak{g})=2$ and $b^+(\mathfrak{g})=1$ respectively.\bigskip

${\bf 1)}$ On $\mathfrak{nil}^3\times\R$, consider
the $2$-parameter family $\{J_{a,b}\}$ of almost complex structures given, with respect to the frame $\{f_1,\ldots,f_4\}$, by the following matrix
$$
\begin{pmatrix}
0 & 0 & -1 & 0\\
a & 0 & -b & -1\\
1 & 0 & 0& 0\\
-b & 1 & -a & 0
\end{pmatrix},
$$
where $a,b$ are real parameters such that $a^2+b^2\neq 0$. A direct computation, by using formula \eqref{anti-invariant-covectors}, shows that
\begin{align*}
\Lambda_{J_{a,b}}^-(\mathfrak{g}^*)&=\Span\{a(f^{34}-f^{12}) -b(f^{23}-f^{14})+
(a^2+b^2)f^{13},a(f^{23}-f^{14}) + b(f^{34}-f^{12})\}\\
&\subset \mathcal{Z}=\Span\{f^{12},f^{34},f^{14},f^{23},f^{13}\}.
\end{align*}
Since $b^+(\g^*)=2$, according to the second bullet of Corollary \ref{tame-new-cor}, $J$ cannot be tamed
by any symplectic form. Furthermore, these almost complex structures must be integrable.\bigskip

${\bf 2})$ On $\mathfrak{nil}^4$, let $\{J_t\}$, for $\vert t\vert <1$, be
represented, with respect to the frame $\{f_1,\ldots ,f_4\}$, by the following matrix,
$$
\frac{1}{1+t^2}
\begin{pmatrix}
0 & t^2-1 & 2t & 0\\
1-t^2 & 0 & 0 & -2t\\
-2t & 0 & 0& t^2-1\\
0 & 2t & 1-t^2 & 0
\end{pmatrix}.
$$
Then, by formula \eqref{anti-invariant-covectors}, we obtain, after a straightforward computation, that
$$
\Lambda_{J_{t}}^-(\mathfrak{g}^*)\ni f^{23}+f^{14}\,\in \mathcal{Z}=\Span\{f^{14},f^{23}, f^{24}, f^{34}\}.
$$
Since $b^+(\g^*)=1$, according to the third bullet of Corollary \ref{tame-new-cor}, $\{J_t\}$ cannot be tamed.\bigskip

We next offer a fun verification of Theorem \ref{AK-tamed-cones} for an explicit $J$ on
$\mathfrak{g}=\mathfrak{nil}^3\times \R$. A direct computation shows that the pair $(J,\omega)$ defined by
\begin{equation}\label{almost-complex-structure-Kodaira-Thurston}
Jf_1=f_2\,,\quad Jf_2=-f_1\,,\quad Jf_3=f_4\,,\quad Jf_4=-f_3
\end{equation}
$$
\omega =f^{12}+ f^{34}\,,
$$
gives rise to an almost K\"ahler structure on $\mathfrak{g}$ with $\zeta =f_1\wedge Jf_1\wedge f_3\wedge Jf_3$.\newline
We immediately
get that
$$
H^+_J(\mathfrak{g})=\Span_\R\{[f^{12}],[f^{34}],[f^{14}-f^{23}]\}\,,
$$
$$
H^-_J(\mathfrak{g})=\Span_\R\{[f^{14}+f^{23}]\}.
$$
Pick the connected component of
$\mathcal{P}_J$ containing $[\omega]$, and denote it by $T$.
Any class in $T$ is represented by a form $\alpha =a f^{12}+b f^{34}+
c \left(f^{14}-f^{23}\right) \in \mathcal Z_J^+$ with $\Phi_{\zeta}(\alpha, \alpha)>0$ and $\Phi_{\zeta}(\alpha,\omega) >0$, which means
$$
ab-c^2 >0\,,\quad a>0\,,\quad b >0.
$$

To show that $\alpha$ is $J-$compatible, we need to check that
$\alpha$ is positive, i.e., $\alpha(v\wedge Jv)>0$, for every $v\neq 0$.
If $$
v=Af_1+Bf_2+Cf_3+Df_4,
$$
then
$$
Jv=-Bf_1+Af_2-Df_3+Cf_4.
$$
It follows that
$$
\alpha(v,Jv)=\left(a f^{12}+b f^{34}+
c(f^{14}-f^{23})\right)(v,Jv)>0
$$
if and only if
\begin{equation}\label{a-positive}
a(A^2+B^2)+b(C^2+D^2)>-2c(AC+BD).
\end{equation}
The last condition \eqref{a-positive} is equivalent to
\begin{equation}\label{a-positiveprime}
a^2(A^2+B^2)^2+b^2(C^2+D^2)^2+2ab (A^2+B^2)(C^2+D^2)-4c^2(AC+BD)^2>0.
\end{equation}
Now,
\begin{eqnarray*}
4c^2(AC+BD)^2 &=& 4c^2(A^2C^2+B^2D^2+2ABCD)\leq 4c^2(A^2+B^2)(C^2+D^2)\\
&< & 4ab(A^2+B^2)(C^2+D^2)\\
&\leq & a^2(A^2+B^2)^2+b^2(C^2+D^2)^2+2ab (A^2+B^2)(C^2+D^2),
\end{eqnarray*}
i.e. \eqref{a-positiveprime} is proved. Hence, the compatible cone of the
almost K\"ahler $J$ on
the Lie algebra $\mathfrak{nil}^3\times \R$ is the connected component $T$ of $\mathcal P_J$.\medskip

\section{Compact quotients of $4$-dimensional Lie groups}
The main results in this paper are inspired by some recent advances on the geometry of compact $4-$dimensional
almost complex manifolds, which we offer a quick review.

For a $2n$-dimensional
compact almost complex
manifold $(M,J)$,
$J$ acts on the bundle of real
2-forms $\Lambda^2$ as an involution, by $\alpha(\cdot, \cdot)
\mapsto \alpha(J\cdot, J\cdot)$, thus we have the splitting at the bundle level $\Lambda^2=\Lambda_J^+\oplus \Lambda_J^-,$
and the splitting at the form level $\Lambda^2=\Lambda_J^+\oplus \Lambda_J^-$.
The corresponding cohomology groups $H^+_J(M)$ and $H^-_J(M)$ were introduced in \cite{LZ}.
\newline
The motivation of introducing these groups is to better understand the question of Donaldson, mentioned in the introduction
and rephrased here in terms of the tamed and almost K\"ahler cones
$$
\mathcal{K}^t_J=\left\{[\omega]\in H^2(M)\,\,\,\vert\,\,\,\omega\,\,
\hbox{\rm is}\,\, J\hbox{-\rm tamed}\right\} \quad \hbox{
and}\quad
\mathcal{K}^c_J=\left\{[\omega]\in H_J^+(M)\,\,\,\vert\,\,\,\omega\,\,
\hbox{\rm is}\,\, J\hbox{-\rm compatible}\right\}.
$$
\vskip.1truecm\noindent
{\bf Question} {\em Let $(X,J)$ be a compact $4$-dimensional almost complex manifold. Then, is it true that
$$
\mathcal{K}^t_J\neq \emptyset \iff \mathcal{K}^c_J\neq \emptyset\,?
$$
}

In \cite{DLZ}, T. Dr\u{a}ghici, the first author
and W. Zhang
proved that on any compact almost complex $4$-dimensional manifold $(M,J)$, the $2$-nd de
Rham cohomology
group decomposes as
$$
H^2(M;\R)=H^+_J(M)\oplus H^-_J(M).
$$
These groups can be viewed as a generalization of Dolbeault groups to the non-integrable case.
Indeed, in \cite[Prop.2.7]{DLZ} (see also \cite{draghici-li-zhang-1}) it
is proved that, if $J$ is a complex structure on a $4$-dimensional compact manifold, then $H^\pm_J(M)$ are identified with real Dolbeault groups, namely,
\begin{equation}\label{Dolbeault}
H^+_J(M)\simeq H^{1,1}_{\overline{\partial}}(M)\cap H^2(M;\R),\qquad
H^-_J(M)\simeq \left(H^{2,0}_{\overline{\partial}}(M)\oplus H^{0,2}_{\overline{\partial}}(M)\right)\cap H^2(M;\R)
\end{equation}

The analogues of Proposition \ref{g} and Theorem \ref{AK-tamed-cones} were first established in \cite{LZ}.
For higher dimensional results see also \cite{AT}.\newline

We end this paper with some applications of the main results to homogeneous almost complex structures.
\subsection{Nomizu-Hattori type theorem for $H^\pm_J(M)$}
The well known Nomizu-Hattori Theorem (see \cite{No}, \cite{H}) states
that if $\Gamma\backslash G$ is a compact solvmanifold of {\em completely solvable type}, i.e., the adjoint representation
of $\mathfrak{g}$ has real eigenvalues, then $H^*(\Gamma\backslash G)\cong H^*({\mathfrak g})$. \newline

We prove a Nomizu-Hattori type theorem for $H^\pm_J(M)$, where $M$ is a $4$-dimensional compact quotient of a simply-connected
Lie group $G$, more precisely
\begin{theorem}\label{Nomizu-Hattori-Theorem}
Let $M=\Gamma\backslash G$ be a compact quotient
of a simply-connected $4$-dimensional real Lie group $G$ by a uniform
discrete subgroup $\Gamma\subset G$ and let $J$ be an almost complex structure on
the Lie algebra ${\mathfrak g}$ of $G$. Assume that $H^2(\mathfrak{g})
\cong H^2(M)$. Then
$$
H^+_J(M)=H^+_{J}({\mathfrak g})\,,\quad H^-_J(M)=H^-_{J}({\mathfrak g}).
$$
\end{theorem}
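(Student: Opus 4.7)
The plan is to realize the inclusion of left-invariant forms as a cochain map that respects the $J$-type decomposition, and then to bootstrap from an isomorphism in total degree to isomorphisms on each $\pm$-summand via a short dimension count. The two cohomological decomposition theorems — Theorem~\ref{Nomizu-1-Lie-algebra} on the Lie algebra side, and the $4$-dimensional manifold decomposition of \cite{DLZ} on the manifold side — supply all the structural input.

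First, $\Lambda^*(\mathfrak{g}^*)$ sits inside the de Rham complex of $M=\Gamma\backslash G$ as the subcomplex of left-invariant forms, with the Chevalley-Eilenberg differential agreeing with the exterior derivative. This gives induced maps $\iota^r\colon H^r(\mathfrak{g})\to H^r(M)$; by hypothesis $\iota^2$ is an isomorphism. Since $J$ is left-invariant by construction, the inclusion carries $\Lambda^{\pm}_J(\mathfrak{g}^*)$ into $\Lambda^{\pm}_J(M)$ and closed forms to closed forms. Hence $\iota^2$ restricts to well-defined natural maps $\iota^{\pm}\colon H^{\pm}_J(\mathfrak{g})\to H^{\pm}_J(M)$, and the whole picture fits into a commutative diagram with the natural inclusions into $H^2(\mathfrak{g})$ and $H^2(M)$.

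Since $M$ is compact, $\mathfrak{g}$ is necessarily unimodular, so Theorem~\ref{Nomizu-1-Lie-algebra} applies and gives $H^2(\mathfrak{g})=H^+_J(\mathfrak{g})\oplus H^-_J(\mathfrak{g})$; the analogous decomposition $H^2(M)=H^+_J(M)\oplus H^-_J(M)$ from \cite{DLZ} shows in particular that the inclusions $H^{\pm}_J(M)\hookrightarrow H^2(M)$ are injective. Combined with injectivity of $\iota^2$ and the commutative diagram, each $\iota^{\pm}$ is injective. The dimension identity
\[
\dim H^+_J(\mathfrak{g})+\dim H^-_J(\mathfrak{g})=\dim H^2(\mathfrak{g})=\dim H^2(M)=\dim H^+_J(M)+\dim H^-_J(M)
\]
together with the inequalities $\dim H^{\pm}_J(\mathfrak{g})\le\dim H^{\pm}_J(M)$ forces equality for each sign, so $\iota^{\pm}$ are isomorphisms.

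The argument is essentially formal once both decomposition theorems are in hand, so there is no real analytic obstacle; the main input one must genuinely invoke (rather than prove) is the manifold-level $J$-decomposition of \cite{DLZ}, which is the four-dimensional feature that makes the whole scheme work. If one were to try to extend the statement beyond $\dim 4$ or without the de Rham identification $H^2(\mathfrak{g})\cong H^2(M)$, this dimension-count strategy would break down, exactly because the $\pm$-decomposition of $H^2$ is no longer automatic.
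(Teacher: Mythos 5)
Your proposal is correct and follows essentially the same route as the paper: unimodularity via Milnor's lemma, the Lie-algebra decomposition of Theorem \ref{Nomizu-1-Lie-algebra}, the manifold-level decomposition of \cite{DLZ}, and the hypothesis $H^2(\mathfrak{g})\cong H^2(M)$ applied to the natural maps $H^{\pm}_J(\mathfrak{g})\to H^{\pm}_J(M)$. The only difference is that you make explicit the injectivity-plus-dimension-count step that the paper compresses into ``the conclusion follows.''
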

\begin{proof}
In view of a result of Milnor (see \cite[Lemma 6.2]{M}), the existence of a uniform discrete subgroup of $G$ implies that the Lie algebra
$\mathfrak{g}$ of $G$ has to be unimodular. Let $J$ be the induced left-invariant almost complex structure
on $M=\Gamma\backslash G$. In view of the assumption $H^2(\mathfrak{g})
\cong H^2(M)$, of the fact that
$J$ is
$\mathcal{C}^\infty$ pure and full \cite[Theorem 2.3]{DLZ} and of Theorem \ref{Nomizu-1-Lie-algebra}, we have that
$$
H^2(\mathfrak{g})=H^+_J({\mathfrak g})\oplus H^-_J({\mathfrak g})\stackrel{\cong}\hookrightarrow
H^2(M;\R)=H^+_J(M)\oplus H^-_J(M)
$$
Noticing that there are natural homomorphisms,
$$
H^+_J({\mathfrak g})\longrightarrow H^+_J(M)\,,\quad H^-_{J}({\mathfrak g})\longrightarrow H^-_J(M),
$$
the conclusion follows.
\end{proof}
\begin{rem}
For the Dolbeault cohomology, in \cite{CF} (see also \cite{CFP}) a Nomizu-Hattori theorem is proved: more precisely, if
$M=\Gamma\backslash G$ is a compact nilmanifold endowed with an {\em Abelian} left-invariant complex structure $J$, namely, $J$ is a complex
structure on $\mathfrak{g}$ such that $[Ju,Jv]=[u,v]$, for every $u,v\in\mathfrak{g}$, then
$H^{p,q}_{\overline{\partial}}(M)\simeq H^{p,q}_{\overline{\partial}}(\mathfrak{g}_\C)$.
\newline
Let $J$ be a complex structure on a $4$-dimensional Lie algebra $\mathfrak{g}$. Then arguing as in \cite[Prop.2.7]{DLZ}, it can
be showed that the same decompositions
\eqref{Dolbeault} hold, for real Dolbeault groups of $\mathfrak{g}$. Therefore, in dimension four, under
the weaker assumption that $G$ is completely solvable, a consequence of Theorem \ref{Nomizu-Hattori-Theorem} is a Nomizu-Hattori theorem for
real Dolbeault groups, namely,
$$
H^{1,1}_{\overline{\partial}}(M)\cap H^2(M;\R)\simeq H^{1,1}_{\overline{\partial}}(\mathfrak{g})\cap H^2(\mathfrak{g})
$$
$$
\left(H^{2,0}_{\overline{\partial}}(M)\oplus H^{0,2}_{\overline{\partial}}(M)\right)\cap H^2(M;\R)\simeq
\left(H^{2,0}_{\overline{\partial}}(\mathfrak{g})\oplus H^{0,2}_{\overline{\partial}}(\mathfrak{g})\right)\cap H^2(\mathfrak{g}).
$$
\end{rem}
\vspace{0.2cm}

\subsection{Tamed and almost K\"ahler structures on quotients}
Finally, we note that there are analogues of Theorems \ref{tamed-and-compatible-intro}, \ref{tame-char}, and \ref{AK-tamed-cones-intro}.
\begin{theorem} \label{AK-and-Tamed-Lie-Groups}
Let $M=\Gamma\backslash G$ be a compact quotient of a $4$-dimensional simply-connected Lie group $G$,
where $\Gamma\subset G$ is a uniform discrete subgroup. Let $J$ be a
left-invariant almost complex structure on $M$. Then the following are equivalent:

0) $J$ is tamed by a symplectic form $\omega$ on $M$ (not necessarily left-invariant).

1) $J$ is tamed by an left-invariant symplectic form $\omega$ on $M$.

2) There are no non-trivial $J$-invariant exact $2$-forms on $M$.

3) $M$ carries a left-invariant symplectic
form compatible with $J$.

4) $M$ carries a symplectic
form compatible with $J$ (not necessarily left-invariant).

5) $h_J^-=b^+(M)-1$.

\noindent Furthermore, if $J$ is tamed, the $J-$compatible cone is a connected component of $\mathcal P_J=\{e\in H^+_J(M)\,\,\,\vert\,\,\, e^2>0\}$.
\end{theorem}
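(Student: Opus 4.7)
The plan is to reduce all six conditions to the Lie algebra level via Haar symmetrization and then invoke Theorems \ref{compatible}, \ref{tame-new}, and \ref{AK-tamed-cones}. Since $M=\Gamma\backslash G$ is compact, Milnor's lemma implies that $\mathfrak{g}$ is unimodular, so $G$ carries a bi-invariant volume form descending to a finite measure on $M$. I would define the symmetrization operator $\mu\colon \Omega^k(M)\to \Lambda^k(\mathfrak{g}^*)$ by averaging the pullback of a form under left translations. The key properties to verify are: $\mu$ commutes with $d$, hence preserves closedness and exactness; since $J$ is left-invariant, $\mu$ preserves the $\pm$-eigenspaces of the $J$-action on $2$-forms; and if $\omega$ tames (resp.\ is compatible with) $J$ pointwise on $M$, then $\mu(\omega)$ tames (resp.\ is compatible with) $J$ at the Lie algebra level, since the integrand $\omega_g(L_{g*}v, J_g L_{g*}v) = \omega_g(L_{g*}v, L_{g*}J_e v)$ is strictly positive for every nonzero $v\in\mathfrak{g}$.

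With $\mu$ in hand, the implications $1\Rightarrow 0$ and $3\Rightarrow 4$ are trivial, while $0\Rightarrow 1$ and $4\Rightarrow 3$ follow by applying $\mu$ to a tame, respectively compatible, symplectic form on $M$; non-degeneracy of $\mu(\omega)$ in dimension four is automatic since a closed $2$-form positive on every $v\wedge Jv$ is symplectic. The equivalences $1\Leftrightarrow 2\Leftrightarrow 3$ then follow from Theorem \ref{compatible}, which applies because $\mathfrak{g}$ is $4$-dimensional unimodular and hence satisfies $B\wedge B=0$; condition 2 on $M$ transfers to the algebra condition $\mathcal B\cap\Lambda_J^+(\mathfrak{g}^*)=\{0\}$ via averaging, precisely condition~(2) of Theorem \ref{compatible}.

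For the equivalence with condition 5, I would apply Theorem \ref{Nomizu-Hattori-Theorem} to identify $H_J^\pm(M)\cong H_J^\pm(\mathfrak{g})$ and hence $b^+(M)=b^+(\mathfrak{g})$, reducing 5 to the Lie algebra statement $h_J^-(\mathfrak{g})=b^+(\mathfrak{g})-1$, which is exactly the tameness criterion of Theorem \ref{tame-new}. For the cone statement, the same averaging shows that every $J$-compatible cohomology class on $M$ is represented by a left-invariant compatible form, so $\mathcal K_J^c$ coincides under this identification with the Lie algebra compatible cone, which by Theorem \ref{AK-tamed-cones} is a connected component of $\mathcal P_J=\{e\in H_J^+(M)\mid e^2>0\}$.

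The main obstacle is the cohomological step: justifying $H_J^\pm(M)\cong H_J^\pm(\mathfrak{g})$, which is needed both for the equivalence with condition 5 and for the cone statement. Symmetrization yields a natural map $H_J^\pm(\mathfrak{g})\to H_J^\pm(M)$ which is injective, but surjectivity rests on the hypothesis $H^2(\mathfrak{g})\cong H^2(M)$ of Theorem \ref{Nomizu-Hattori-Theorem}. In the four-dimensional unimodular setting this is typically verified through the classification of such Lie algebras (e.g.\ via the completely solvable case of Nomizu-Hattori, or by direct computation on the short list in \cite{Ov}); establishing it uniformly is where the most care is needed.
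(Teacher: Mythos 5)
Your proposal follows essentially the same route as the paper's own argument: unimodularity of $\mathfrak{g}$ via Milnor's lemma, averaging (the process of \cite{FGr}) to replace a taming or compatible symplectic form on $M$ by a left-invariant one, and then reduction of all statements to the Lie algebra results, Theorems \ref{compatible}, \ref{tame-new} and \ref{AK-tamed-cones}. The cohomological identification you flag for condition 5) and the cone statement is indeed the one delicate point, and the paper treats it just as tersely (its Theorem \ref{Nomizu-Hattori-Theorem} carries the hypothesis $H^2(\mathfrak{g})\cong H^2(M)$), so your extra caution there is appropriate rather than a deviation.
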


As already recalled, the Lie algebra
$\mathfrak{g}$ of $G$ is unimodular.
Assume that $\omega$ is a non left-invariant
symplectic structure on $M$ taming $J$ (compatible with $J$). Then by the
same process as in \cite{FGr}, we may construct a left-invariant symplectic structure $\tilde{\omega}$ on
$M$, taming $J$ (compatible with $J$).

In view of these facts, all the statements follow from
the corresponding results for 4-dimensional unimodular Lie algebras, specifically, Theorems \ref{tame-new}, \ref{compatible}, \ref{AK-tamed-cones}.

It is amusing to notice that, for a compact $4$-manifold, tameness of an integrable almost complex structure
 (\cite{HL}, see also \cite{DLZ2}) and tameness of a homogeneous almost complex structure can be both characterized by $h_J^-=b^+-1$.\vskip.2truecm

We note that there is an alternative argument for the equivalence between 1) and 3) in \cite{DLZ2}.
We also note that the K\"ahler cone is determined in terms of the curve cone by Lamari, Buchdahl in \cite{Lamari} and \cite{Bu} for a K\"ahler surface,
by Demailly and Paun in \cite{DP} for an arbitrary K\"ahler manifold, which generalizes the famous Nakai-Moizheson ampleness criterion in projective geometry.
In the almost K\"ahler case in dimension 4, such a generalization is contained in Taubes' \cite{T1} for a generic $J$ when $b^+=1$, and for several
geometrically interesting families
of almost complex structures on rational manifolds in \cite{LZ2}.


\begin{thebibliography}{12}
\bibitem{ABDO} A. Andrada, M. L. Barberis, I. Dotti, and G. P. Ovando, Product structures on four dimensional solvable Lie algebras, {\em Homology Homotopy Appl.} {\bf 7} (2005), 9--37.
\bibitem{AT} D. Angella, A. Tomassini, On cohomological decomposition of almost complex
manifolds and deformations, {\em J. Symplectic Geom.} {\bf 9} (2011), 1--26.
\bibitem{Bu} N. Buchdahl, A Nakai-Moishezon criterion for non-K\"ahler surfaces,
{\em Ann. Inst. Fourier} {\bf 50} (2000), 1533--1538.
\bibitem{Chu} B. Y. Chu, Symplectic homogeneous spaces, {\em Trans. Amer. Math. Soc.}
{\bf 197} (1974), 145--159.
\bibitem{CF} S. Console, A. Fino, Dolbeault cohomology of compact nilmanifolds, {\em Transform. Groups} {\bf 6} (2001), 111--124.
\bibitem{CFP} S. Console, A. Fino, Y.S. Poon, Stability of abelian complex structures,
{\em Internt. J. Math.} {\bf 17} (2006), no. 4, 401--416.
\bibitem{DP} J.-P. Demailly, M. Paun, Numerical characterizations of the K\"ahler cone of a
compact K\"ahler manifold,
{\em Ann. of Math. (2)} {\bf 159} (2004), 1247--1274.
\bibitem{D} S.~K. Donaldson, {\em Two-forms on four-manifolds and elliptic equations},
Inspired by {S}. {S}. {C}hern, Nankai Tracts Math., vol.~11, World Sci. Publ., Hackensack,
NJ, 2006, pp.~153--172.
\bibitem{DLZ} T. Dr\v{a}ghici, T.-J. Li, W. Zhang, Symplectic forms and cohomology decomposition
of almost complex $4$-manifolds, {\em Intern. Math. Res. Not.} {\bf 2010} (2010), 1--17.
\bibitem{draghici-li-zhang-1} T. Dr\v{a}ghici, T.-J. Li, W. Zhang, On the $J$-anti-invariant cohomology
of almost complex $4$-manifolds, to appear in {\em Q. J. Math.}.
\bibitem{DLZ2} T. Dr\v{a}ghici, T.-J. Li, W. Zhang, Geometry of tamed almost complex structures on
4-dimensional manifolds, to appear in {\em ICCM2010 Proceedings}.
\bibitem{EF} N. Enrietti, A. Fino, Special Hermitian metrics and Lie groups, {	\tt arXiv:1104.1612v1}.
\bibitem{FT} A. Fino, A. Tomassini, On some cohomological properties of almost complex
manifolds, {\em J. Geom. Anal.} {\bf 20}
(2010), 107--131.
\bibitem{FGr} A. Fino, G. Grantcharov, Properties of manifolds with skew-symmetric torsion
and special holonomy, {\em Adv. Math.} {\bf 189} (2004), 439--450.
\bibitem{GHR} S.~J. Gates, C.~M. Hull, M. Ro\v cek, Twisted multiplets and new supersymmetric nonlinear sigma models,
{\em Nuc. Phys. B} {\bf 248} (1984) 157--186.
\bibitem{G} M. Gualtieri, Generalized complex geometry, {\em Ann. of Math. (2)} {\bf 174} (2011), 75--123.
\bibitem{HL} R. Harvey, H. Lawson, An intrinsic charactherization of K\"ahler manifolds,
{\em Invent. Math.} {\bf 74} (1983), 169--198.
\bibitem{H} A. Hattori, Spectral sequence in the de Rham cohomology of
fibre bundles, {\em J. Fac. Sci. Univ. Tokyo.} {\bf 8} (1960), 289--331.
\bibitem{K} J.-L. Koszul, Homologie et cohomologie des alg\'ebres de Lie, {\em Bull. Soc. Math. France} {\bf 78} (1950), 65--127.
\bibitem{Lamari} A. Lamari, Le c\^{o}ne k\"ahl\'{e}rien d'une surface,
{\em J. Math. Pures Appl.} (9) {\bf 78} (1999), 249--263.
\bibitem{L} T.-J. Li, Symplectic Calabi-Yau surfaces,
{\em Handbook of Geometrical Analysis}, No. 3, 231–-356, Adv. Lect. Math. (ALM), 14, Int. Press, Somerville, MA, 2010.
\bibitem{LZ} T.-J. Li, W. Zhang, Comparing tamed and compatible symplectic cones and cohomological
properties of almost
complex manifolds, {\em Comm. Anal. Geom.} {\bf 17} (2009), 651--683.
\bibitem{LZ2} T.-J. Li, W. Zhang, J-symplectic cones of rational four manifolds, \texttt{preprint}.
\bibitem{MS} T. B. Madsen, A. Swann,  Invariant strong KT geometry on four-dimensional solvable Lie groups, {\em J. Lie Theory} {\bf 21} (2011), 55--70.
\bibitem{M} J. Milnor, Curvature of left invariant metrics on Lie groups, {\em Advances in Math.} {\bf 21} (1976), no. 3, 293329.
\bibitem{No} K. Nomizu, On the cohomology of compact homogeneous spaces of nilpotent
Lie groups, {\em Ann. of Math.}
{\bf 59} (1954), 531--538.
\bibitem{Ov} G. Ovando, Four Dimensional Symplectic Lie Algebras, {\em Beitr\"age Algebra Geom.}
{\bf 47}
(2006), 419--434.
\bibitem{S} A. Strominger, Superstrings with torsion, {\em Nuclear Phys. B}
{\bf 274} (1986) 253--284.
\bibitem{sullivan} D. Sullivan, Cycles for the dynamical study of foliated manifolds and complex manifolds,
{\em Invent. Math.}
{\bf 36} (1976), 225--255.
\bibitem{T1} C.H.Taubes, Tamed to compatible: symplectic forms via moduli space integration, {\em J. Symplectic Geom.} {\bf 9} (2011), 161--250.
\end{thebibliography}
\end{document}